\numberwithin{equation}{section}
\renewcommand{\L}{\mathcal{L}}
\newcommand{\RR}{\mathbb{R}}
\newcommand{\NN}{\mathbb{N}}
\newcommand{\QQ}{\mathbb{Q}}
\newcommand{\pre}[2]{#2^{#1}}
\newcommand{\On}{\mathrm{On}}
\newcommand{\Nat}{\mathbb{N}}
\newcommand{\Sep}{\mathsf{Sep}}
\newcommand{\alzero}{{\aleph_0}}
\newcommand{\Nbhd}{\boldsymbol{N}}
\newcommand{\id}{\operatorname{id}}
\newcommand{\pow}{\mathscr{P}}
\newcommand{\Mod}{\operatorname{Mod}}
\newcommand{\lh}{\operatorname{lh}}
\newcommand{\dom}{\operatorname{dom}}
\newcommand{\cl}{{\operatorname{Cl}}}
\newcommand{\MS}{\mathfrak{M}}
\newcommand{\US}{\mathfrak{U}}
\newcommand{\T}[2]{\mathbb{T}_{#1}^{#2}}
\newcommand{\isom}{\cong}
\newenvironment{enumerate-(a)}{\begin{enumerate}[label={\upshape (\alph*)}, leftmargin=2pc]}{\end{enumerate}}
\newenvironment{enumerate-(a)-r}{\begin{enumerate}[label={\upshape (\alph*)}, leftmargin=2pc,resume]}{\end{enumerate}}
\newenvironment{enumerate-(a)-5}{\begin{enumerate}[label={\upshape (\alph*)}, leftmargin=2pc,start=5]}{\end{enumerate}}
\newenvironment{enumerate-(A)}{\begin{enumerate}[label={\upshape (\Alph*)}, leftmargin=2pc]}{\end{enumerate}}
\newenvironment{enumerate-(A)-r}{\begin{enumerate}[label={\upshape (\Alph*)}, leftmargin=2pc,resume]}{\end{enumerate}}
\newenvironment{enumerate-(i)}{\begin{enumerate}[label={\upshape (\roman*)}, leftmargin=2pc]}{\end{enumerate}}
\newenvironment{enumerate-(i)-r}{\begin{enumerate}[label={\upshape (\roman*)}, leftmargin=2pc,resume]}{\end{enumerate}}
\newenvironment{enumerate-(I)}{\begin{enumerate}[label={\upshape (\Roman*)}, leftmargin=2pc]}{\end{enumerate}}
\newenvironment{enumerate-(I)-r}{\begin{enumerate}[label={\upshape (\Roman*)}, leftmargin=2pc,resume]}{\end{enumerate}}
\newenvironment{enumerate-(1)}{\begin{enumerate}[label={\upshape (\arabic*)}, leftmargin=2pc]}{\end{enumerate}}
\newenvironment{enumerate-(1)-r}{\begin{enumerate}[label={\upshape (\arabic*)}, leftmargin=2pc,resume]}{\end{enumerate}}
\newenvironment{itemizenew}{\begin{itemize}[leftmargin=2pc]}{\end{itemize}}
\newtheorem{theorem}{Theorem}[section]
\newtheorem{lemma}[theorem]{Lemma}
\newtheorem{corollary}[theorem]{Corollary}
\newtheorem{proposition}[theorem]{Proposition}
\newtheorem{mainproblem}{Main Open Problem}
\newtheorem{fact}[theorem]{Fact}
\newtheorem{claim}{Claim}[theorem]
\theoremstyle{definition}
\newtheorem{defin}[theorem]{Definition}
\newtheorem{example}[theorem]{Example}
\newtheorem{notation}[theorem]{Notation}
\newtheorem{coding}{Coding}
\theoremstyle{remark}
\newtheorem{remark}[theorem]{Remark}
\begin{document}

\title[Classifying complete metric spaces]{Can we classify complete metric spaces \\ up to isometry?}
\date{\today}
\author[L.~Motto Ros]{Luca Motto Ros}
\address{Dipartimento di matematica \guillemotleft{Giuseppe Peano}\guillemotright, Universit\`a di Torino, Via Carlo Alberto 10, 10121 Torino --- Italy}
\email{luca.mottoros@unito.it}
 \subjclass[2010]{Primary: 03E15; Secondary: 54E50}
 \keywords{Polish (metric) spaces; isometry; classification; Borel reducibility}
\thanks{The author was supported by the Young
Researchers Program ``Rita Levi Montalcini'' 2012 through the project ``New
advances in Descriptive Set Theory'' for this research.}

\begin{abstract} 
We survey some old and new results concerning the classification of complete metric spaces up to isometry, a theme initiated by Gromov, Vershik and others. 
All theorems concerning separable spaces appeared in various papers in the last twenty years: here we tried to present them
in a unitary and organic way, sometimes with new and/or simplified proofs. The results concerning non-separable spaces (and, to some extent, the setup and techniques used 
to handle them) are instead new, and suggest new lines of investigation in this area of research. 
\end{abstract}

\maketitle

\tableofcontents

The purpose of this paper is threefold:
\begin{enumerate-(A)}
\item
Give a concise and self-contained presentation of the theory of Borel reducibility for equivalence relations, including some motivations for its development, accessible to non-experts 
in the field (Section~\ref{sec:Borelreducibility}).
\item
Illustrate how Borel reducibility (and descriptive set theory in general) may be used to gain new insight into some natural and interesting classification problems. 
We concentrate on the prominent example of classifying separable complete (i.e.\ Polish) metric spaces 
up to isometry, and survey the most important results in this area obtained by Gromov, Vershik, Kechris, Gao and many others, 
including some very recent progress on the long-standing open problem of determining the complexity of isometry between locally compact Polish metric spaces. When possible, we provide (sometimes simplified and self-contained) proofs of the most relevant results, or at least we overview the main ideas and methods involved in them (Sections~\ref{sec:survey} and~\ref{sec:treesisometry}).
\item
Show how a similar approach can be used to study the analogous classification problems for non-separable spaces (Section~\ref{sec:nonseparable}). These results are new and falls in the 
recent and fast-growing area of \emph{generalized descriptive set theory} --- see e.g.~\cite{Friedman:2011nx,AM} for more on this. This is the only part of the paper in which some knowledge of the most common set-theoretical 
axioms beyond the usual Zermelo-Fraenkel set theory \( \mathsf{ZFC} \) 
is helpful to fully appreciate the scope of validity of the results: nevertheless, the methods used to prove them are a generalization of techniques from~\cite{Camerlo:2015ar} 
which do not require any special background in set theory, and should thus be accessible to non-logicians as well.
\end{enumerate-(A)}
We conclude the paper with a brief discussion on how the (classical) theory of Borel reducibility could be further extended in order to be able to handle more and more classification problems naturally arising in other areas 
of mathematics (Section~\ref{sec:finaldiscussion}).

We tried to keep the material of the paper as self-contained as possible, sticking to standard notation and terminology, and reducing to a minimum the background in set theory required. 
For the reader's convenience, when possible we gave references to both 
the original papers where the results first appeared, and to the standard monographs~\cite{Kechris1995,gaobook} where some of them were presented in a more systematic way.

\section{Classification problems and Borel reducibility} \label{sec:Borelreducibility}

Consider the problem of classifying complete metric spaces, possibly belonging to some relevant subclass, up to isometry. This means that given a class \( \MS \) consisting of complete metric spaces, we want to assign 
\emph{complete invariants} to its elements, that is we seek a set \( I \) and a map \( \varphi \colon  \MS \to I \) such that for all \( X,Y \in \MS \)
\[ 
X \cong^i Y \iff \varphi(X) = \varphi(Y),
 \] 
where \( \cong^i \) denotes the equivalence relation of isometry between metric spaces. Clearly, such a classification is meaningful only if both the set of invariants \( I \) and the assignment map \( \varphi \) are reasonably simple and ``concrete''.%
\footnote{Of course simpleness and concreteness are vague concepts, but we will provide an exact mathematical formulation of what we have in mind later on.}

The first positive result of this kind is due to Gromov~\cite{gromov1999}.

\begin{theorem}[Gromov, see e.g.~{\cite[Theorem 14.2.1]{gaobook}}] \label{thm:gromov}
The problem of classifying compact metric spaces up to isometry is \emph{smooth} (that is, we can use reals as complete invariants).
\end{theorem}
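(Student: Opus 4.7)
The plan is to associate to each compact metric space $X$ the sequence of compact subsets of Euclidean space
\[
K_n(X) = \{ (d(x_i,x_j))_{i,j<n} : (x_0,\ldots,x_{n-1}) \in X^n \} \subseteq \RR^{n^2},
\]
that is, the set of distance matrices realized by ordered $n$-tuples of points of $X$. The candidate complete invariant is $\varphi \colon X \mapsto (K_n(X))_{n\in\Nat}$. Since each $K_n(X)$ is the continuous image of the compact space $X^n$, it is compact, and so lies in the hyperspace $\mathcal{K}(\RR^{n^2})$ of compact subsets of $\RR^{n^2}$ equipped with the Hausdorff topology, which is Polish. The codomain $\prod_n \mathcal{K}(\RR^{n^2})$ is therefore Polish, and can be identified via a Borel bijection with a Borel subset of $\RR$, so an equality of sequences $\varphi(X) = \varphi(Y)$ is the same as equality of two reals.

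The easy half is the implication $X \cong^i Y \Rightarrow \varphi(X) = \varphi(Y)$, which is immediate since an isometry sends $n$-tuples to $n$-tuples with the same distance matrix. For the converse I would proceed by back-and-forth: fix countable dense sequences $(x_i)_{i\in\Nat}$ in $X$ and $(\tilde y_j)_{j\in\Nat}$ in $Y$, and construct inductively two enumerations $(a_k)$ of a dense subset of $X$ and $(b_k)$ of a dense subset of $Y$, interleaving the given $x_i$'s with chosen preimages of the $\tilde y_j$'s, such that $d(a_k,a_l) = d(b_k,b_l)$ for all $k,l$. At each step the condition $K_{n+1}(X) = K_{n+1}(Y)$ guarantees the existence of a legal extension; a compactness argument (the set of valid choices of $b_{n}$ in $Y$ given $b_0,\ldots,b_{n-1}$ is an intersection of nested nonempty compact subsets of $Y$, using the closedness of $K_{n+1}(Y)$) turns mere existence into an actual choice. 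The resulting distance-preserving bijection between two dense subsets then extends uniquely to an isometry $X \to Y$.

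Finally, for this to genuinely witness smoothness one must verify that $\varphi$ is Borel once a standard Borel coding of compact metric spaces is fixed (for instance compact subsets of the Urysohn space, or of the Hilbert cube via some isometric embedding). This reduces to checking that for each $n$ the assignment $X \mapsto K_n(X)$ is Borel into $\mathcal{K}(\RR^{n^2})$, which in turn follows from the fact that membership of a matrix in $K_n(X)$ up to $\varepsilon$ can be approximated by the distance matrices of tuples drawn from any Borel-uniform dense sequence in $X$.

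The main obstacle is the converse direction of the invariant property: the distance-matrix hypothesis is purely existential at each finite stage, and one must be careful that the back-and-forth choices made at each stage can be kept consistent with \emph{all} future demands. The key point is that because $Y$ is compact, the intersection of the decreasing family of admissible extensions at each stage is nonempty, and this lets the recursion go through. Compared to this, the Polishness of the invariant space and the Borel measurability of $\varphi$ are routine book-keeping once a concrete coding of the domain is fixed.
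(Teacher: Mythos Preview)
Your invariant $(K_n(X))_n$ is exactly the one the paper uses (the paper's $\varphi_n(X^{n+1})$ is your $K_{n+1}(X)$, up to an index shift), with the same Polish codomain $\prod_n K(\RR^{n^2})$; the paper actually gives no further detail on the converse direction than a reference, so your compactness sketch already goes beyond what is written there. One small sharpening: in a naive step-by-step back-and-forth the one-step extension property is \emph{not} immediate from $K_{n+1}(X)=K_{n+1}(Y)$ alone (that hypothesis only produces \emph{some} $(n{+}1)$-tuple in $Y$ with the right matrix, not one extending your previously chosen $b_0,\dots,b_{n-1}$); the clean fix, which your final paragraph correctly gestures at, is to pass to the inverse limit of the nonempty compact sets $F_n=\{(y_0,\dots,y_{n-1})\in Y^n: d_Y(y_i,y_j)=d_X(x_i,x_j)\text{ for all }i,j<n\}$ to obtain an isometric embedding $X\hookrightarrow Y$, and then finish either by symmetry together with the fact that an isometric self-embedding of a compact metric space is surjective, or by a genuine two-sided compact tree argument.
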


Gromov's original proof of this result uses the so-called Gromov-Hausdorff distance; although Gromov's classification is not very difficult, the following (smooth) classification of compact metric spaces 
is somewhat easier to describe. 
Given a compact metric space \( X = (X,d) \), define for every \( n \in \Nat \) a map 
\[ 
\varphi_n \colon X^{n+1} \to \RR^{(n+1)^2} \qquad (x_0, \dotsc, x_n) \mapsto (d(x_i,x_j))_{0 \leq i,j, \leq n}.
 \] 
The maps \( \varphi_n \) are continuous, and since each \( X^{n+1} \) is compact then so is its image \( \varphi_n(X^{n+1}) \subseteq \RR^{(n+1)^2} \). Equip each hyperspace \( K(\RR^{(n+1)^2}) \) of compact subsets of \( \RR^{(n+1)^2} \) with the Vietoris topology (which is separable and completely metrizable, see e.g.~\cite[Section 14.F]{Kechris1995} for the definition and more details), so that also the countable product
\begin{equation} \label{eq:gromovinvariants}
I = \prod_{n \in \Nat} K(\RR^{(n+1)^2})
 \end{equation} 
is separable and completely metrizable. Then one can check that the map \(\varphi\) assigning to each compact metric 
space \( X \) the sequence
\begin{equation} \label{eq:gromovassignment}
\varphi(X) = (\varphi_n(X^{n+1}))_{n \in \Nat } \in I
 \end{equation}
is an assignment of complete invariants with respect to the isometry relation \( \cong^i \) on compact metric spaces (see e.g.~\cite[Theorem 14.2.1]{gaobook}).

One may argue that the above classification (which is, in a technical sense, equivalent to Gromov's one) is fully satisfactory because of the following three facts:
\begin{enumerate-(a)}
\item \label{gromov-a}
the space \( I \) of complete invariants from~\eqref{eq:gromovinvariants}, being separable and completely metrizable, is Borel isomorphic to the real line \( \RR \) (and thus could even be identified with it);
\item \label{gromov-b}
the assignment map \( \varphi \) from~\eqref{eq:gromovassignment} may be construed as a Borel map between the space \( K(\mathbb{U}) \) of all compact subspaces of the Urysohn space \( \mathbb{U} \) (which, up to isometry, contains all compact metric spaces), endowed with the Vietoris topology, and the space of invariants \( I \);
\item \label{gromov-c}
there is a Borel map%
\footnote{Such a Borel map may be obtained by first identifying through a Borel isomorphism the space 
\( I \) with the space \( \RR_{> 0 } \) of positive reals numbers, and then associating to each 
\( r \in \RR_{> 0} \) a compact metric space consisting of exactly two points at distance \( r \).}
 \( \psi \colon I \to K(\mathbb{U}) \) which assigns to distinct invariants non-isometric compact 
 metric spaces.
\end{enumerate-(a)}

Items~\ref{gromov-a}--\ref{gromov-b} show that the chosen assignment of complete invariants is natural and reasonably simple, while item~\ref{gromov-c}  essentially shows that no strictly simpler classification of compact metric spaces up to isometry may be obtained.

Theorem~\ref{thm:gromov} naturally raises the question whether a similar classification can be 
obtained for arbitrary \emph{Polish} (i.e.\ separable complete) \emph{metric spaces}. 
Unfortunately, Vershik pointed out in~\cite{vershik1998} 
that 
``the classification of Polish spaces up to isometry is an enormous task.
More precisely, this classification is not `smooth' in the modern terminology.''  Nevertheless, this is arguably not the end of the story for such a classification problem. For example, one may still find quite satisfactory a classification 
in which the complete invariants assigned are \emph{real numbers up to rational translation} (we will see in Theorem~\ref{thm:heineborelultra} that this is a necessary move if one wants to classify e.g.\ Heine-Borel Polish
ultrametric spaces); or one may 
feel that it would still be acceptable a classification obtained using \emph{countable structures} (graphs, linear orders, and so on) \emph{up to isomorphism} as complete invariants. And even when a solution to the given 
classification problem seems out of reach, one could still desire to have at least a way to measure how much complicated such a problem is when compared to other classification problems.

All these ideas are captured by the subsequent definition. In what follows, by a \emph{Borel space} \( X = (X, \mathcal{B}) \) we mean a set \( X \) equipped with a countably generated \(\sigma\)-algebra \( \mathcal{B} \) 
on \( X \)
which separates points (i.e\ for every \( x,y \in X \) there is \( B \in \mathcal{B} \) such that \( x \in B \) and \( y \notin B \)); the elements of \( \mathcal{B} \) are called \emph{Borel sets} of \( X \). 
By~\cite[Proposition 12.1]{Kechris1995}, this is 
equivalent to require that there is a separable metrizable topology \( \tau \) on \( X \) such that \( \mathcal{B} \) coincides with the \(\sigma\)-algebra of Borel sets generated by \( \tau \). When such a topology may be taken to 
be completely metrizable (i.e.\ a \emph{Polish} topology), then the Borel space \( (X,\mathcal{B}) \) is called \emph{standard}. A particularly important construction of a standard Borel space that will be used in this paper
 is given in the following example.
\begin{example} \label{xmp:EffrosBorel}
Given a topological space \( X \),  the collection \( F(X) \) of all its closed subsets can be equipped with the \(\sigma\)-algebra \( \mathcal{B}_{F(X)} \) generated by the sets of the form
\begin{equation} \label{eq:EffrosBorel}
\{ F \in F(X) \mid F \cap U \neq \emptyset \},
 \end{equation}
for \( U \subseteq X \) nonempty open; the \(\sigma\)-algebra \( \mathcal{B}_{F(X)} \) is called \emph{Effros Borel structure} of \( F(X) \). 
It turns out that if \( X \) is Polish, then \( (F(X), \mathcal{B}_{F(X)}) \) is a standard Borel space (see e.g.\ ~\cite[Theorem 12.6]{Kechris1995}).
\end{example}
 Clearly, every subspace
of a standard Borel space (with the induced subalgebra) is a Borel space and, conversely, every Borel space may be construed as a subspace of a suitable standard Borel space. 
It may be worth recalling that a subspace of a standard Borel space \( (X, \mathcal{B}_X ) \) is standard as well if and only if it belongs to \( \mathcal{B}_X \). Moreover, the class of standard Borel spaces is closed under 
(at most) countable products and disjoint unions.
Finally, a function \( f \) between the Borel spaces \( (X,\mathcal{B}_X) \) and \( (Y, \mathcal{B}_Y) \) is called \emph{Borel(-measurable)} if \( f^{-1}(B) \in \mathcal{B}_X \) for every \( B \in \mathcal{B}_Y \); when \( \mathcal{S} \) is a set of generators for the \( \sigma \)-algebra \( \mathcal{B}_Y \), this is clearly equivalent to requiring that \( f^{-1}(B) \in \mathcal{B}_X \) for every \( B \in \mathcal{S} \).

\begin{defin} \label{def:reducibility}
Let \( E \) and \( F \) be equivalence relations defined on Borel spaces \( X \) and \( Y \), respectively. We say that \( E \) is \emph{Borel reducible} to \( F \) (in symbols, \( E \leq_B F \)) if there is a Borel function \( f \colon X \to Y \) such that for every \( x,y \in X \)
\[ 
x \mathrel{E} y \iff f(x) \mathrel{F} f(y).
 \] 
A function \( f \) as above is called (Borel) \emph{reduction} of \( E \) to \( F \), and we denote this by \( f \colon E \leq_B F \). When \( E \leq_B F \) but \( F \nleq_B E \) we write \( E <_B F \), i.e.\ \( <_B \) is the strict part of \( \leq_B \). 
Finally, \( E \) and \( F \) are \emph{Borel bi-reducible} (in symbols, \( E \sim_B F \)) if both 
\( E \leq_B F \) and \( F \leq_B E \).
\end{defin}

\begin{remark}
In the current literature, Definition~\ref{def:reducibility} is usually given only for equivalence relations defined on \emph{standard} Borel spaces. This is probably because the theory of Borel reducibility becomes somewhat nicer under 
this simplification. However, if one wants to study classification problems naturally arising in other areas of mathematics our more general setup becomes necessary, as it is often the case that
certain natural classes of objects form Borel spaces which are not necessarily standard --- see e.g.\ Proposition~\ref{prop:discreteandlocallycompactarecoanalytic}. 
When we will need to specify the complexity of the domain of a given equivalence relation, 
we will speak of equivalence relations with standard Borel (analytic, coanalytic, ...) domain, with the obvious meaning. 
\end{remark}

In the classical context, the theory of Borel reducibility is primarily concerned with \emph{analytic} equivalence relations, i.e.\ equivalence relations \( E \) on a standard Borel space \( X \) such that \( E \) is an analytic 
subset of \( X \times X \). (Recall that a subset of a standard Borel space \( Z \) is \emph{analytic} if and only if it is a Borel image of some 
standard Borel space: clearly, all Borel subsets of \( Z \) are analytic, but if \( Z \) is uncountable then there are analytic subsets of \( Z \) which are not Borel.) We now adapt this to our more general setup.

\begin{defin} \label{def:analyticer}
An equivalence relation \( E \) on a Borel space \( X \) is \emph{analytic} if and only if \( E = A \cap (X \times X ) \) for some analytic \( A \subseteq Y \times Y \) with \( Y \supseteq X \) a standard Borel space.
\end{defin}

Clearly, all \emph{Borel} equivalence relations (i.e.\ equivalence relations \( E \) on a Borel space \( X \) which are Borel subsets of \( X \times X \)) are analytic. Furthermore, we will see 
in Proposition~\ref{prop:reductiontostandard} that nearly every analytic equivalence relation on a Borel space can be construed as the restriction to its domain of an analytic 
equivalence relation defined on a larger \emph{standard} Borel space.

Let us now come back to classification problems and briefly discuss how the theory of Borel reducibility between (analytic) equivalence relations is related to them.
Any classification problem is essentially an equivalence relation \( E \) on a space \( X \) (and viceversa, so that we can safely confuse the two notions). When \( X \) carries a natural Borel structure (in the sense described before Example~\ref{xmp:EffrosBorel}), the equivalence relation \( E \) naturally fits the setup of Definition~\ref{def:reducibility}. Under these circumstances, it is then natural to give the following definition 
 (which makes mathematically precise the statement of Theorem~\ref{thm:gromov}).

\begin{defin} \label{def:smooth}
An equivalence relation (or a classification problem) 
\( E \) on a Borel space \( X \) is called \emph{smooth} if there is a Borel space \( Y \) such that \( E \leq_B \id(Y) \), where \( \id(Y) \) is the identity relation on the space \( Y \).
\end{defin}

\begin{remark}
The usual definition of smoothness concerns only Borel equivalence relations \( E \) with \emph{standard} Borel domains, while in Definition~\ref{def:smooth} we are not imposing
any restriction on the Borel domain of the equivalence relation considered. However, 
our more general definition is coherent with the original one, in that any smooth equivalence relation (possibly with non-standard Borel domain) is the restriction of a smooth equivalence relation with standard Borel domain --- see Proposition~\ref{prop:reductiontostandard} and the ensuing remark.
Similar considerations apply to the subsequent definitions of essentially countable, classifiable by countable structures, and essentially orbit equivalence relations, as well as to the
observations concerning the \( \leq_B \)-maximality in such classes of certain elements with a standard Borel domain.
\end{remark}

Notice that since by~\cite[Theorem 15.6]{Kechris1995} any two uncountable standard Borel spaces are Borel isomorphic, in Definition~\ref{def:smooth} we may equivalently replace \( Y \) with any given uncountable standard Borel space, such as 
\( \RR \). Definition~\ref{def:smooth} thus identifies when the elements of \( X \) are ``concretely classifiable'' up to \( E \), i.e.\ when it is possible to assign in a Borel fashion complete invariants from a ``nice'' space (like \( \RR \)) 
to the elements of \( X \). 

As mentioned above, in some situations one may be content e.g.\ with a classification up to \( E \)-equivalence of the elements of \( X \) which uses reals up to rational translations as complete invariants: 
in light of Definition~\ref{def:reducibility}, this amounts to show that \( E \leq_B E_0 \), where \( E_0 \) is the Vitali equivalence relation on \( \RR \) defined by 
\[ 
r \mathrel{E_0} r' \iff r-r' \in \QQ .
\] 

More generally, one 
may accept to be able to assign complete invariants to the elements of \( X \) ``up to countably many mistakes''. 
In our setup, this would correspond to showing that \( E \leq_B F \) with \( F \) a \emph{countable Borel} equivalence relation, i.e.\ a Borel equivalence relation \( F \) on a (standard) Borel space such that each 
\( F \)-equivalence class has (at most) countably many elements. Notice that among countable 
Borel equivalence relations there is a \( \leq_B \)-maximal one, which is usually denoted by \( E_\infty \): in fact, \( E_\infty \) can be construed as the orbit equivalence 
relation induced by the shift action of the free group \( \mathbb{F}_2 \) with two generators on the space \( \pow(\mathbb{F}_2) \) of all subsets of \( \mathbb{F}_2 \) (see e.g.~\cite[Theorem 7.3.8]{gaobook}). Thus the concept of being able to assign 
complete invariants ``up to countably many mistakes'' is captured precisely by the following definition.

\begin{defin}
An equivalence relation \( E \) on a Borel space \( X \) is called \emph{essentially countable} if \( E \leq_B E_\infty \).
\end{defin}

Even more generously, one may still consider reasonable to classify the elements of \( X \) using countable structures (graphs, linear orders, and so on) up to isomorphism. Using Definition~\ref{def:reducibility}, we can again 
formulate this into the following definition.

\begin{defin} \label{def:classifiable}
An equivalence relation \( E \) on a Borel space \( X \) is called \emph{classifiable by countable structures} if \( E \leq_B {\cong_{\L}} \), where \( \cong_\L \) denotes the isomorphism relation between countable 
\( \L \)-structures for \( \L \) an arbitrary countable (relational) language.
\end{defin}

The isomorphism relation \( \cong_\L \) appearing in the above definition \label{pageref:models} may be construed as an equivalence relation on a Polish (hence standard Borel) space as follows. Let \( \L = \{ R_i \mid i < I \} \) with%
\footnote{We denote by \( \omega \) the first infinite ordinal. As customary in set theory, when useful we identify it with the set \( \NN \) of natural numbers.} 
\( I \leq \omega \) be the given language, and let \( n_i \) be the arity of the relational symbol \( R_i \). Up to isomorphism, each countable \( \L \)-structure has domain \( \Nat \), and thus can be identified (through 
characteristic functions of its predicates) with an element of 
\[ 
\Mod^{\alzero}_\L = \prod_{i < I} \pre{\left( \pre{n_i}{\Nat} \right)}{2}.
 \] 
Conversely,%
\footnote{For example, when \( L = \{ R \} \) is the graph language consisting of just one binary relational symbol, then \( \Mod^{\alzero}_\L = \pre{\Nat \times \Nat}{2} \), and each \( x \in \Mod^\alzero_\L \) gives rise to the \( \L \)-structure \( M_x = (\Nat, R^{M_x}) \), where for every \( n,m \in \Nat \) we set \( n \mathrel{R^{M_x}} m \iff x(n,m) =1 \). }
 each element \( x = (x_i)_{i < I} \in \Mod^\alzero_\L \) gives rise to a unique \( \L \)-structure \( M_x \) with domain \( \Nat \) such that for every \( i < I \) and \( \vec{a} \in \pre{n_i}{\Nat} \)
\[ 
M_x \models R_i[\vec{a}] \iff x_i(\vec{a}) = 1.
 \] 
Thus \( \Mod^\alzero_\L \) may be regarded as the space of all countable \( \L \)-structures (up to isomorphism). Moreover, since \( \Mod^\alzero_\L \) is naturally isomorphic to (a product of copies of) the Cantor space \( \pre{\Nat}{2} \), it can be equipped with a natural Polish topology and the corresponding standard Borel structure. Such topology, sometimes called \emph{logic topology}, is generated by the basic open sets determined by finite \( \L \)-structures with domain \( \subseteq \Nat \).

\begin{remark} \label{rmk:models}
In many cases, one is interested in a special subclass of \( \L \)-structures (e.g.\ graphs, linear orders, trees, ...): this in general amounts to considering a certain Borel subspace of \( \Mod^\alzero_\L \) closed under isomorphism. 
It is worth noticing that there is a close connection between the complexity of such a  Borel subspace and the possible axiomatization of the class of \( \L \)-structures under consideration. 
Indeed, by the Lopez-Escobar theorem 
(see e.g.~\cite[Theorem 16.8]{Kechris1995}) 
for every \( X \subseteq \Mod^\alzero_\L \) the following are equivalent:
\begin{itemizenew}
\item
\( X \) is Borel (i.e.\ a standard Borel space) and closed under isomorphism;
\item
\( X = \{ x \in \Mod^\alzero_\L \mid M_x \models \upvarphi \} \) for some \( \L_{\omega_1 \omega} \)-sentence \( \upvarphi \), where \( \L_{\omega_1 \omega} \) is the infinitary logic obtained from the usual first-order logic 
by further allowing the use of countable conjunctions and disjunctions.
\end{itemizenew}
It follows that most of the classes of countable structures we are going to consider, like e.g.\ graphs, linear orders, trees, and so on, form standard Borel subspaces of \( \Mod^\alzero_\L \).
\end{remark}

As for the case of countable Borel equivalence relations, also among the equivalence relations classifiable by countable structures there is a \( \leq_B \)-maximal one;
 by results of H.~Friedman and Stanley
(see e.g.~\cite[Sections 13.1--13.3]{gaobook}) such an equivalence relation may construed as any of the following:
\begin{itemizenew}
\item
the relation \( \cong_{\mathsf{GRAPHS}} \) of isomorphism between countable graphs;
\item
the relation \( \cong_{\mathsf{LO}} \) of isomorphism between countable linear orders;
\item
the relation \( \cong_{\mathsf{TREES}} \) of isomorphism between countable (descriptive set-theoretic) trees, where we call \emph{tree} any subset of the collection \( \pre{<\Nat}{\Nat} \) of finite sequences of natural 
numbers which is closed under initial segments and ordered by the subsequence (i.e.\ the inclusion) relation --- this corresponds to the special case of Definition~\ref{def:alpha-tree} where we set 
\( \alpha = \kappa = \omega \) (recall that we identify \( \omega \) with \( \NN \)).
\end{itemizenew}
Thus an equivalence relation \( E \) is classifiable by countable structures if and only if \( E \leq_B {\cong_{\mathsf{GRAPHS}}} \) (equivalently, \( E \leq_B {\cong_{\mathsf{LO}}} \), or 
\( E \leq_B {\cong_{\mathsf{TREES}}} \)). Notice that, contrarily to the case of essentially countable equivalence relations, an equivalence relation which is classifiable by countable structures needs not to be Borel.

The isomorphism relation \( \cong_\L \) coincides with the orbit equivalence relation induced by the so-called \emph{logic action} of the group \( S_\infty \) of permutations of \( \Nat \) on the space  \( \Mod^\alzero_\L \) of countable \( \L \)-structures --- see e.g.~\cite[Section 16.C]{Kechris1995} for more details. This suggests to further generalize Definition~\ref{def:classifiable} by considering arbitrary equivalence relations induced by continuous actions of Polish groups. In what follows, \( G \) is a \emph{Polish group} (i.e\ a topological group whose topology is separable and completely metrizable) acting in a continuous way on a Polish space \( Y \). The resulting \emph{orbit equivalence relation} will be denoted by \( E_G^Y \), where for all \( x,y \in Y \)
\[ 
x \mathrel{E_G^Y} y \iff \exists g \in G \, (g.x = y).
 \] 
(Here \((g,x) \mapsto g.x \) denotes the action of \( G \) on \( Y \).)

\begin{defin} \label{def:essorbit}
An equivalence relation \( E \) on a Borel space \( X \) is \emph{an essentially orbit equivalence relation} if there is a Polish group \( G \) acting continuously on a Polish space \( Y \) such that \( E \leq_B {E_G^Y} \).
\end{defin}

It can be shown that for every Polish group \( G \) there is a \( \leq_B \)-maximal element among the orbit equivalence relations of the form \( E_G^Y \), which is usually denoted by 
\( E_G^\infty \).% 
\footnote{The relation \( E^\infty_G \) can be construed as the coordinatewise right action of \( G \) on the product \( \pre{\Nat}{F(G)} \) of countably many copies of the Effros Borel space \( F(G) \). 
Strictly speaking, this 
is just a Borel action of \( G \) on a standard Borel space: nevertheless, by~\cite[Theorem 4.4.6]{gaobook} it is possible to equip the latter with a Polish topology (generating the same Borel sets) which turns the 
given action of \( G \) into a continuous one.} 
Moreover, since Uspenski\v{\i} showed that the group \( H([0;1]^\NN) \) of homeomorphisms of the Hilber cube \( [0;1]^\NN \) is universal among Polish 
groups (i.e.\ every such group is isomorphic to a closed subgroup of \( H([0;1]^{\NN}) \), see~\cite[Theorem 9.18]{Kechris1995}), the equivalence relation \( E^\infty_{H([0;1]^{\NN})} \) is 
\( \leq_B \)-maximal among \emph{all} orbit equivalence relations, that is \( E_G^Y \leq_B E^\infty_{H([0;1]^{\NN})}  \) for every Polish group \( G \) acting continuously
on a Polish space \( Y \). As a consequence, \( E \) is an essentially orbit equivalence relation if and only if \( E \leq_B {E^\infty_{H([0;1]^{\NN})}} \). Of course, the latter condition provides an extremely loose classification for \( E \), as the equivalence relation \( {E^\infty_{H([0;1]^{\NN})}} \) is enormously complex; however, it still gives us some
nontrivial information on \( E \), e.g.\ it implies that all \( E \)-equivalence classes are Borel.

Since it can be shown that
\[ 
\id(\RR) <_B E_0 <_B E_\infty <_B  E^\infty_{H([0;1]^{\NN})},
 \] 
one sees that Definitions~\ref{def:smooth}--\ref{def:essorbit} determine larger and larger subclasses of 
the collection of all analytic equivalence relations: yet they do not exhaust it. For example, it is not hard to see that there are analytic 
equivalence relations \( E \) (with standard Borel domain) which are \( \leq_B \)-maximal among \emph{all} analytic equivalence relations: such relations, which are dubbed \emph{complete}, 
are thus the most complex analytic equivalence relations and 
cannot satisfy any of Definitions~\ref{def:smooth}--\ref{def:essorbit} because e.g.\ there must necessarily be \( E \)-equivalence classes which are proper analytic (hence non-Borel). 
Figure~\ref{fig:sumup} summarizes our discussion on the structure of analytic equivalence relations under 
\( \leq_B \) and can be used for future reference when we will determine the complexity of various concrete classification problems in the next sections.

\begin{center}
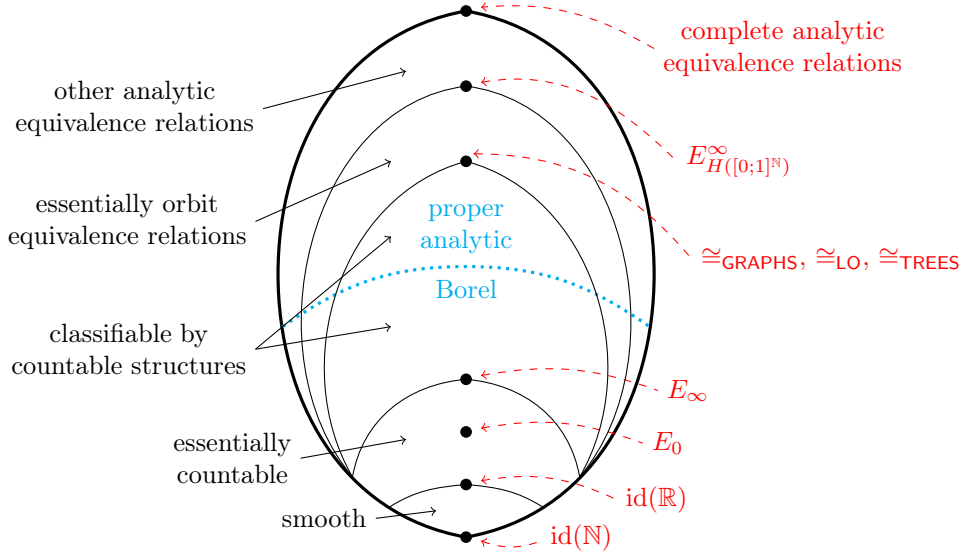
\begin{figure}
\begin{tikzpicture}[scale=1]
\draw [very thick] (0,0) to [out=10,in=270] (2.5,3.5) to [out=90,in=350] (0,7); % esterno lato destro
\draw [very thick] (0,0) to [out=170,in=270] (-2.5,3.5) to [out=90, in=190] (0,7); % esterno lato sinistro
\draw [fill] (0,0) circle [radius=0.07]; % bullet id(omega)
\draw [fill] (0,7) circle [radius=0.07]; % bullet analytic complete
\draw [red,dashed, ->] (2.5,6.5) to [out=150,in=5] (0.2,7.05);
\node [red, align=center,right] at (2.5,6.5) {complete analytic \\ equivalence relations};
\draw [red,dashed,<-] (0.2,-0.05) to [out=-20, in=-160] (1,-0.05);
\node [red,right] at (1,0) {\( \mathrm{id(\mathbb{N})} \)}; 
% smooth
\draw [fill] (0,0.7) circle [radius=0.07]; % bullet id(R)
\draw [red,dashed, <-] (0.2,0.75) to [out=10,in=155] (2,0.5);
\node [red,right] at (2,0.5) {\( \mathrm{id(\mathbb{R})} \)};
\draw (-1.02,0.4) to [out=32,in=185] (0,0.7) to [out=355,in=148] (1.02,0.4);
\draw [->] (-1.2,0.25) to (-0.4,0.35);
\node [left] at (-1.2,0.25) {smooth};
%\draw [->] (-1.2,0.25) to (-0.1,0.65);
%\draw [->] (-1.2,0.25) [out=330,in=190] to (-0.1,-0.07);
%Borel vs analytic
\draw [cyan, dotted, very thick] (-2.44,2.8) to [out=35,in=180]  (0,3.6) to [out=0,in=145] (2.44,2.8); 
%\node [align=center, right] at (2.5,1.9) {Borel \\ equiv.rel.};
\node [cyan] at (0,3.3) {Borel};
\node [cyan, align=center] at (0,4.1) {proper \\ analytic};
%\node [align=center, right] at (2.5,4.9) {(proper) \\ analytic \\ equiv.rel.};
%essentially countable
\draw (-1.51,0.8) to [out=80,in=185] (0,2.1) to [out=355,in=100] (1.51,0.8);
\draw [->] (-2.2,1.05) to (-0.8,1.3);
\node [align=center,left] at (-2.2,1.05) {essentially \\ countable}; % {essentially \\ countable};
\draw [fill] (0,2.1) circle [radius=0.07]; % bullet E_ctbl
\draw [red,dashed, <-] (0.2,2.15) to [out=10,in=160] (2.55,1.95);
\node [red,right] at (2.55,1.95) {\( E_{\infty} \)};
\draw [fill] (0,1.4) circle [radius=0.07]; % bullet E_0
\draw [red,dashed, <-] (0.2,1.45) to [out=10,in=160] (2.35,1.25);
\node [red,right] at (2.35,1.25) {\( E_{0} \)};
% classifiable by countable structures
%\draw [thick] (1.51,0.8) to [out=60,in=270] (1.2,2.5) to [out=90,in=330] (0,5); % esterno lato destro
\draw  (1.51,0.8) to [out=60,in=345]  (0,5); % esterno lato destro
%\draw [thick] (-1.51,0.8) to [out=120,in=270] (-1.2,2.5) to [out=90, in=210] (0,5); % esterno lato sinistro
\draw  (-1.51,0.8) to [out=120,in=195] (0,5); % esterno lato sinistro
\draw [fill] (0,5) circle [radius=0.07]; % bullet graphs
\draw [red,dashed, <-] (0.1,5.1) to [out=0,in=130] (3,3.7);
\node [red,right] at (3,3.7) {\( \cong_{\mathsf{GRAPHS}} \), \( \cong_{\mathsf{LO}} \), \( \cong_{\mathsf{TREES}} \)};
\draw [->] (-2.8,2.5) to (-1,2.8);
\draw [->] (-2.8,2.5) to (-1,4.0);
\node [align=center, left] at (-2.8,2.5) {classifiable by \\ countable structures};
% group actions
%\draw [thick] (0,0) to [out=15,in=270] (2,3) to [out=90,in=345] (0,6); % esterno lato destro
\draw  (1.51,0.8) to [out=50,in=353] (0,6); % esterno lato destro
%\draw [thick] (0,0) to [out=165,in=270] (-2,3) to [out=90, in=195] (0,6); % esterno lato sinistro
\draw  (-1.51,0.8) to [out=130,in=187] (0,6); % esterno lato sinistro
\draw [fill] (0,6) circle [radius=0.07]; % bullet id(omega)
\draw [red,dashed, <-] (0.2,6.05) to [out=10,in=135] (2.8,5);
\node [red,align=center,right] at (2.8,5) {\( E^\infty_{ H([0;1]^\mathbb{N})} \)};
\draw [->] (-2.8,4.2) to (-1,5);
\node [align=center, left] at (-2.8,4.2) {essentially orbit \\ equivalence relations};
\draw [->] (-2.7,5.7) to (-0.8,6.2);
\node [align=center, left] at (-2.7,5.7) {other analytic \\ equivalence relations};
\end{tikzpicture}
\caption{The structure of analytic equivalence relations under Borel reducibility, together with some of the most important subclasses and some equivalence relations which are used as milestones when determining the complexity of classification problems.} \label{fig:sumup}
\end{figure}
\end{center}

In all the above examples we interpreted the relation \( E \leq_B F \) for \( E \) and \( F \) equivalence relations on the Borel spaces \( X \) and \( Y \),
respectively, as a precise mathematical formulation of the following informal idea:
\begin{quote}
We can classify elements of \( X \) up to \( E \)-equivalence using as complete invariants the \( F \)-equivalence classes (or more precisely: using as complete invariants the elements of \( Y \) up to \( F \)-equivalence).
\end{quote}
As explained above, this allows us to obtain (weaker and weaker) solutions to the given classification problem associated to \( E \).
However, once a solution \(f \colon E \leq_B F \) has been found, one would also like to know whether 
such a classification is as simple as possible, i.e.\ whether there can be strictly simpler \( F' <_B F \) such that still \( E \leq_B F' \). One way to prove that a given classification \( E \leq_B F \) is (essentially) 
optimal is to show that indeed \( F \leq_B E \);
this clearly prevents the existence of an \( F' \) as above.% 
\footnote{Results establishing that \( E \leq_B F \) for \( E \) a ``concrete'' classification problem and \( F \) a well-understood equivalence relation are often called \emph{classification theorems}.
In contrast, results establishing that \( F \leq_B E \), especially when \( F \) is quite complicated, are often referred to as \emph{anti-classification theorems}: this is because, as discussed, they show that no classification strictly 
simpler than \( F \) can be obtained.}
Thus the relation of bi-reducibility \( E \sim_B F \) can be interpreted as a mathematical formulation of the 
following assertion:
\begin{quote}
The elements of \( X \) can be classified up to \( E \)-equivalence using the \( F \)-equivalence classes as complete invariants, and no better classification is possible.
\end{quote}

In many cases, the inverse reducibility \( F \leq_B E \) of a given classification \( E \leq_B F \) must be proved by hand. However, there are special situations in which certain important and deep dichotomy theorems 
are helpful in this respect.
For example, suppose that we showed that a certain equivalence relation \( E \) with standard Borel domain is smooth, i.e.\ that \( E \leq_B \id(\RR) \). Since this implies that \( E \) is Borel (hence coanalytic), the following Silver's dichotomy allows us to prove that such a classification is optimal by simply checking that \( E \) has uncountably many equivalence classes.
\begin{theorem}[Silver, see e.g.~{\cite[Theorem 5.3.5]{gaobook}}]
Let \( E \) be a coanalytic equivalence relation with standard Borel domain. 
Then either there are countably many \( E \)-equivalence classes, or else \( \id(\RR) \leq_B E \).
\end{theorem}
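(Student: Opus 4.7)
The plan is to establish Silver's dichotomy via the classical Gandy--Harrington approach. If the domain of $E$ is countable then the first alternative holds trivially, so I may assume it is an uncountable standard Borel space and, up to Borel isomorphism (using \cite[Theorem 15.6]{Kechris1995}), take it to be a Polish space $X$. Since $E$ is coanalytic, $\neg E \subseteq X \times X$ is analytic. Fixing a real parameter $a$ that codes a $\Pi^1_1(a)$ definition of $E$, I equip $X$ with the Gandy--Harrington topology $\tau_{GH}$ whose basic open sets are the $\Sigma^1_1(a)$ subsets of $X$. Two features of $\tau_{GH}$ drive the argument: it is strong Choquet, so player II has a winning strategy guaranteeing that well-played decreasing sequences of nonempty basic open sets have nonempty intersection; and every analytic set is $\tau_{GH}$-open, so in particular $\neg E$ is open in $X \times X$ with the product Gandy--Harrington topology.

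I then split $X$ into an ``easy'' piece and its complement. Let $U$ be the union of those $\Sigma^1_1(a)$ sets $V$ for which there exists $x \in V$ with $V \subseteq [x]_E$; equivalently, $U$ is the set of points admitting a basic $\tau_{GH}$-neighborhood contained in their own $E$-class. Since there are only countably many $\Sigma^1_1(a)$ codes, $U$ is $E$-invariant and meets only countably many $E$-classes. By the very definition of $U$, every nonempty basic $\tau_{GH}$-open subset of $X \setminus U$ contains two $E$-inequivalent points.

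If $X \setminus U$ is empty, then $E$ has countably many classes and the first alternative holds. Otherwise, I construct a Cantor scheme of nonempty $\Sigma^1_1(a)$ sets $\{V_s : s \in 2^{<\NN}\}$ contained in $X \setminus U$ such that for incomparable $s, t$ one has $V_s \times V_t \subseteq \neg E$, and such that the diameters of $V_s$ in a compatible Polish metric on $X$ tend to $0$. At stage $s$ the splitting property produces two $E$-inequivalent points $x_0, x_1 \in V_s$, and openness of $\neg E$ in the product Gandy--Harrington topology yields disjoint basic $\tau_{GH}$-neighborhoods $V_{s0} \ni x_0$ and $V_{s1} \ni x_1$ inside $V_s$ whose product lies in $\neg E$. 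Interleaving these splittings with moves of a winning strong Choquet strategy and with Polish-diameter shrinking, each branch $f \in 2^\NN$ determines a single point $x_f \in \bigcap_n V_{f \restriction n}$, and the resulting map $f \mapsto x_f$ is a Borel injection reducing $\id(2^\NN)$ to $E$. Since $2^\NN$ and $\RR$ are Borel isomorphic as standard Borel spaces, this gives $\id(\RR) \leq_B E$.

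The main obstacle is to make the splitting procedure uniform in $s$: three kinds of choices---the $E$-separating splits, moves of a winning Choquet strategy, and Polish-metric shrinking---must be interleaved in a Borel-measurable way, so that the final assignment $f \mapsto x_f$ is actually Borel. A secondary but essential point is verifying that $U$ absorbs only countably many $E$-classes, which rests on the lightface fact that there are only countably many $\Sigma^1_1(a)$ codes and that $U$ can be written as a countable union of such sets.
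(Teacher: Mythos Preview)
The paper does not give its own proof of Silver's dichotomy; the theorem is only stated and attributed to the literature (the cited reference \cite[Theorem~5.3.5]{gaobook}). There is therefore nothing in the paper to compare your argument against, so let me simply assess your sketch on its own terms.

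Your outline is the standard Harrington proof via the Gandy--Harrington topology, and the overall strategy is correct. Two points deserve comment. First, a small slip: the set $U$ as you define it is \emph{not} $E$-invariant in general (a point $y$ that is $E$-equivalent to some $x\in U$ need not itself lie inside any $\Sigma^1_1(a)$ set contained in its class). This is harmless, since what you actually need is only that $U$ meets at most countably many $E$-classes, which is true for the reason you give.

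Second, and more substantively, there is a genuine gap at the start of the Cantor-scheme construction. You want the $V_s$ to be nonempty $\Sigma^1_1(a)$ sets \emph{contained in} $X\setminus U$, but you have not explained why any such set exists, i.e.\ why $Y=X\setminus U$ is itself $\Sigma^1_1(a)$ (equivalently, why $U$ is $\Pi^1_1(a)$). Without this, you cannot initialize the scheme, and more importantly the inductive step breaks down: from $x\in V_s\cap Y$ you only get an $E$-inequivalent $y\in V_s$, not $y\in V_s\cap Y$, so one of the two successor sets $V_{s0},V_{s1}$ may fail to meet $Y$ and the recursion stalls. The missing ingredient is a reflection argument: the property ``$A\times A\subseteq E$'' is $\Pi^1_1$-on-$\Sigma^1_1$, so by the first reflection theorem one may replace the $\Sigma^1_1(a)$ sets in the definition of $U$ by $\Delta^1_1(a)$ sets, and from this one extracts that $U$ is $\Pi^1_1(a)$. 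This is exactly the step that makes Harrington's proof work, and it should be singled out rather than absorbed into the ``uniformity'' discussion at the end of your sketch.
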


In particular, this shows that there are essentially two types of smooth classification problems: those with (at most) countably many equivalence classes, and those which are Borel bi-reducible with \( \id(\RR) \).

Similarly, if we can show that \( E \leq_B E_0 \), then the following generalization of the Glimm-Effros dichotomy due to Harrington, Kechris and Louveau can be used to show the optimality of such a classification.
\begin{theorem}[Harrington-Kechris-Louveau, see e.g.~{\cite[Theorem 6.3.1]{gaobook}}] \label{thm:HKL}
Let \( E \) be a Borel equivalence relation with standard Borel domain. Then either \( E \leq_B \id(\RR) \), or else \( E_0 \leq_B E \).
\end{theorem}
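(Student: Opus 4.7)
The plan is to follow Harrington's effective descriptive set theory approach, refined by Kechris and Louveau via the Gandy-Harrington topology. First, after relativizing all data to a suitable parameter, I may assume without loss of generality that $E$ is $\Delta^1_1$ as a subset of $X \times X$. I then equip $X$ with the \emph{Gandy-Harrington topology} $t$, generated by all $\Sigma^1_1$ subsets of $X$. The two facts I would import from general theory are that every nonempty $\Sigma^1_1$ set is nonempty $t$-open, and that $(X,t)$ is strong Choquet; in particular, any countable intersection of $t$-dense $t$-open subsets of a nonempty $\Sigma^1_1$ set remains nonempty.

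Next I would run the dichotomy. Call a point $x \in X$ \emph{grainy} if some $\Sigma^1_1$ neighborhood $U \ni x$ satisfies that $E \cap (U \times U)$ is smooth, and let $S$ denote the set of grainy points; a routine verification shows $S$ is $\Sigma^1_1$ and $E$-invariant. The split is on whether $S = X$. If $S = X$, a reflection argument covers $X$ by countably many $\Sigma^1_1$ pieces, each carrying a $\Delta^1_1$ selector for $E$, and a standard bookkeeping assembles these selectors into one global Borel map witnessing $E \leq_B \id(\RR)$.

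The heart of the proof is the case $S \neq X$, in which $Y = X \setminus S$ is a nonempty $E$-invariant $\Pi^1_1$ set on which $E$ is \emph{nowhere smooth}. I would construct by recursion on $s \in \pre{<\NN}{2}$ a Cantor scheme of nonempty $\Sigma^1_1$ sets $U_s \subseteq Y$ of vanishing Polish diameter, together with Borel partial maps $\varphi_s \colon U_{s^\frown 0} \to U_{s^\frown 1}$ whose graphs are contained in $E$. This data determines a continuous injection $\pi \colon \pre{\NN}{2} \to X$ with $\{\pi(\alpha)\} = \bigcap_n U_{\alpha \restriction n}$, and composing appropriate $\varphi_s$ along any eventual-agreement pair shows $\alpha \mathrel{E_0} \beta \implies \pi(\alpha) \mathrel{E} \pi(\beta)$. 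For the reverse implication, at each splitting step I would exploit the non-graininess of the ambient $\Sigma^1_1$ set together with a Kuratowski-Ulam argument in the Gandy-Harrington topology to locate a pair $(x,y) \in U_{s^\frown 0} \times U_{s^\frown 1}$ that is $E$-inequivalent, and propagate these separating witnesses down the tree so that any two $\pi(\alpha), \pi(\beta)$ with $\alpha \not\mathrel{E_0} \beta$ end up on opposite sides of some preserved inequivalence.

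The main obstacle is the coupled recursion itself: at every node one must simultaneously preserve nonemptiness and $\Sigma^1_1$-ness of each $U_s$, arrange the $\varphi_s$ so that $E$-orbits match across ``equivalent-extension'' branches, enforce $E$-separation across the ``inequivalent-extension'' branches, and shrink Polish diameters. The strong Choquet property of the Gandy-Harrington topology is the technical engine that allows these countably many competing demands to be satisfied inside a single nonempty $\Sigma^1_1$ set at each stage, and combining it with non-graininess is precisely what rules out the smooth alternative on the branch along which the construction proceeds.
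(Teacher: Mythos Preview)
The paper does not give its own proof of this theorem: it is stated as a background result with a pointer to \cite[Theorem~6.3.1]{gaobook}, and is used only as a tool (to certify that a classification of the form $E \leq_B E_0$ is optimal once $E$ is known to be non-smooth). So there is nothing in the paper to compare your argument against.

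That said, your sketch is a recognizable outline of the standard Harrington--Kechris--Louveau proof via the Gandy--Harrington topology, which is indeed the argument presented in the cited reference. A couple of points where your description is looser than the actual proof: the dichotomy is usually organized not around ``$E$ is smooth on a $\Sigma^1_1$ neighborhood'' but around whether every $E$-class is $\Delta^1_1$ (equivalently, whether $[x]_E$ is contained in the $\Pi^1_1$ set of points whose $E$-class is $\Delta^1_1$), and the Cantor-scheme construction must interleave shrinking in the \emph{Polish} topology with refinement in the Gandy--Harrington topology while also arranging that the $\varphi_s$ cohere so that $E_0$-equivalent branches land in the same $E$-class and $E_0$-inequivalent branches are separated by an $E$-invariant $\Delta^1_1$ set at some finite stage. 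Your paragraph acknowledges these demands but does not indicate the mechanism (category arguments for $E$ in $t \times t$, plus a reflection to pass from $\Sigma^1_1$ separating sets to $\Delta^1_1$ $E$-invariant ones) that makes them compatible; in a full write-up that is exactly where the work lies.
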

Since \( E \leq_B E_0 \) already implies that \( E \) is Borel, to show that such a classification is optimal it is thus enough to show that \( E \) is not smooth and then apply Theorem~\ref{thm:HKL}.

In the same vein, if one can show that the given analytic equivalence relation \( E \) is complete (i.e.\ such that \( F \leq_B E \) for all analytic equivalence relations \( F \)), then \( E \) is arguably 
as complex as possible, and we can 
thus interpret this fact as saying that the elements of \( X \) are essentially unclassifiable up to 
\( E \)-equivalence --- no reasonable classification of the elements of \( X \) can be given, no matter how generous we are in allowing complicated invariants. For example, using this idea Louveau and Rosendal~\cite{louros} showed that many mathematical objects (including e.g.\ countable graphs) are unclassifiable up to the relevant notion of bi-embeddability (see also~\cite{cammarmot} for a strengthening of these results), while Ferenczi, Louveau and Rosendal~\cite{Ferenczi:2009fk} showed that separable Banach spaces are unclassifiable up to isomorphism, and that the same is true for (Abelian) Polish groups up to isomorphism. Many other natural classification problems have been tackled using these ideas,  e.g.\ the problem of classifying ergodic measure-preserving transformations of the unit interval with Lebesgue measure (see~\cite{For}), the problem of classifying separable \( C^* \)-algebras (completely solved in~\cite{Sab}), or the problem of classifying finitely-additive measures (a problem suggested by Maharam, see~\cite{BorDza} for some partial results in this direction), to mention a few.

The preorder \( \leq_B \) can also be used to compare the relative complexity of different classification problems. Indeed, suppose we are given two equivalence relations \( E \) and \( E' \) on corresponding Borel spaces. Then we can interpret the relation \( E \leq_B E' \) as:
\begin{quote}
The problem of classifying elements of \( X \) up to \( E \)-equivalence is not more complicated than the problem of classifying elements of \( X' \) up to \( E' \)-equivalence.
\end{quote}
In fact, if \( E \leq_B E' \) then any (Borel) assignment \( \varphi \colon X' \to I \) of complete invariants (possibly up to some equivalence relation \( F \) on \( I \)) to the elements of \( X' \) can be converted into a 
corresponding 
assignment of the same complete invariants to the elements of \( X \) by simply composing \( \varphi \) with any reduction \( f \colon E \leq_B E' \). 
Under this interpretation, the relation \( E \sim_B E' \) becomes a mathematical formulation of the following assertion:
\begin{quote}
The classification problems associated to \( E \) and \( E' \) have the same complexity.
\end{quote}

\section{The complexity of the isometry relation on separable spaces} \label{sec:survey}

We now come back to the problem of classifying complete metric spaces up to isometry, concentrating first on the separable case. 
In order to let this problem fit our general setup involving the Borel reducibility \( \leq_B \), 
we first need to construe the hyperspace of \emph{Polish} metric spaces as a (standard) Borel space: 
this is done through suitable codings, and there are at least two natural (but equivalent) ways to do it.

\begin{coding}[See e.g.~{\cite{Gao2003,Camerlo:2015ar}}] \label{coding1}
 We exploit the existence of a universal Urysohn space \( \mathbb{U} \), referring the reader to~\cite[Section 1.2]{gaobook} for the relevant definitions and 
proofs. Given any Polish metric space \( X \), using the Kat\v{e}tov construction one can canonically construct a Polish metric space \( \mathbb{U}_X \) 
such that for all Polish metric spaces \( X \) and \( Y \) 
\begin{enumerate-(1)}
\item
 \( \mathbb{U}_X \) contains (a canonical isometric copy of) \( X \), and every isometry \( \varphi \colon X \to Y  \) can be extended to an isometry \( \varphi^* \colon \mathbb{U}_X \to \mathbb{U}_Y \);
 \item
\( \mathbb{U}_X \) has the 
so-called \emph{Urysohn property}, whence \( \mathbb{U}_X \cong^i \mathbb{U}_Y \) for all Polish metric spaces \( X,Y \).
\end{enumerate-(1)} 
Let now \( \mathbb{U} \) be the space \( \mathbb{U}_\RR \): by the Urysohn property, a metric space is Polish if and only if it is isometric to a closed subspace of \( \mathbb{U} \). It is thus natural to regard the space \( F (\mathbb{U}) \)  of closed subspaces of \( \mathbb{U} \), equipped with its Effros Borel structure, as the standard Borel space of all Polish metric spaces. 
\end{coding}

\begin{coding}[See e.g.~{\cite{vershik1998,clemensisometry}}] \label{coding2} 
Any separable complete metric space \( X = (X,d) \) is uniquely determined (up to isomorphism) by any countable dense subset \( D \subseteq X \) together with the distances between the points in \( D \). More precisely, given any enumeration \( (x_n)_{n \in \Nat} \)  of \( D \), we can code \( X \) as the element \( c_X \) of \( \pre{\Nat \times \Nat}{\RR} \) defined by \( c_X(n,m) = d(x_n,x_m) \). Let
\begin{align*}
\mathcal{M} = \{ c \in \pre{\Nat \times \Nat}{\RR} \mid {} & \forall n,m \in \Nat \, (c(n,m) \geq 0 ) \text{ and } \forall n \in \Nat \, (c(n,n) = 0 ) \text{ and } {} \\ 
& \forall n,m \in \Nat \, (c(n,m) = c(m,n)) \text{ and } {}\\
& \forall n,m,k \in \Nat \, (c(n,k) \leq c(n,m)+c(m,k) \}
 \end{align*}
Clearly, \( \mathcal{M} \) is a closed subset of \( \pre{\Nat \times \Nat}{\RR} \), hence it is a Polish space. Moreover, the code \( c_X \) for the Polish metric space \( X \) described above belongs to \( \mathcal{M} \). Conversely, 
given \( c \in \mathcal{M} \) we 
can define \( M_c \) as the completion of the well-defined metric space \( (\Nat/E_c ,d_c) \), where \( \Nat/E_c \) denotes the quotient of \( \Nat \) by the equivalence relation \( E_c \) defined by \( n \mathrel{E_c} m \iff c(n,m) = 0 \),  and  \( d_c([n]_{E_c},[m]_{E_c}) = c(n,m) \), where \( [n]_{E_c} \) and \( [m]_{E_c} \) are the \( E_c \)-equivalence classes or \(n \) and \(m \), respectively. 
(Notice that by construction \( M_{c_X} \cong^i X \) for any Polish metric space \( X \).) Thus \( \mathcal{M} \) 
may be regarded as the Polish space of all% 
\footnote{Technically speaking, \( \mathcal{M} \) contains only codes for \emph{nonempty} Polish metric spaces: this  can be easily fixed by e.g.\ adding some element 
of \( \pre{\Nat \times \Nat}{\RR} \setminus \mathcal{M} \) to \( \mathcal{M} \) and use it as a code for the empty space. This minor issue does not arise with Coding~\ref{coding1} because we already have \( \emptyset \in F(\mathbb{U}) \).}
Polish metric spaces.
\end{coding} 

The two hyperspaces \( F(\mathbb{U}) \) and \( \mathcal{M} \) of (codes for) Polish metric spaces described in Codings~\ref{coding1} and~\ref{coding2} are equivalent in 
the following technical sense: there are Borel functions
 $\Phi \colon \mathcal M \to F( \mathbb U )$ and $\Psi \colon F( \mathbb U )\to \mathcal M $ such that $\Phi (c)\cong^i M_c$ and $M_{\Psi (X)}\cong^i X$, for every $c \in \mathcal M $ and $X\in F( \mathbb U )$ 
 (thus, in particular, the maps \( \Phi \) and \( \Psi \) reduce the isometry relation to itself). 
 
 A map \( \Psi \) as above may easily be obtained 
as follows.
 
\begin{notation} \label{notation:psi_n}
Using the selection theorem for \( F(X) \) of Kuratowski-Ryll-Nardzewski
(see~\cite[Theorem 12.13]{Kechris1995}), we fix once and for all a sequence
of Borel functions \( \psi_n \colon F(\mathbb{U}) \to \mathbb{U} \) such that for
every \( \emptyset \neq X \in F(\mathbb{U}) \) the sequence \( ( \psi_n(X) )_{ n \in \Nat } \) is
an enumeration (which can be assumed without repetitions if $X$ is infinite)
of a dense subset of \( X \); the maps \( \psi_n \) will be called
\emph{Borel selectors}.
 Notice that if \( X \in F(\mathbb{U}) \) is discrete
then \(( \psi_n(X) )_{ n \in \Nat } \) is an enumeration of the whole \( X \).
\end{notation}

Given now \( X \in F(\mathbb{U}) \), let \( \Psi(X) \in \mathcal{M} \) be defined by
 \begin{equation} \label{eq:Psi}
\Psi(X)(n,m) = d(\psi_n(X),\psi_m(X)).
 \end{equation}
Such a \( \Psi \) is clearly as required. 

The construction of the map \( \Phi \colon \mathcal{M} \to F(\mathbb{U}) \) 
proceeds instead as follows. 
Given a Polish metric space, let \( \mathbb{U}_X \) be the space obtained from \( X \) through the Kat\v{e}tov construction as described in Coding~\ref{coding1}, and let
\begin{equation} \label{eq:phi_X} 
\varphi_X \colon \mathbb{U}_X \to \mathbb{U} = \mathbb{U}_\RR 
\end{equation}
be the isometry coming from the proof of the Urysohn theorem (see~\cite[Theorem 1.2.5]{gaobook}), so that, in particular,  \( \varphi_X(X) \cong^i X \) for every Polish metric space \( X \). 
It can then be shown (see~\cite[Section 14.1]{gaobook}) that the map \( \Phi \) defined by setting for every \( c \in \mathcal{M} \)
\begin{equation} \label{eq:Phi}
\Phi(c) = \varphi_{M_c}(M_c)
 \end{equation}
is as required.

\begin{remark}
It may be further checked that the map \( \Phi \) from~\eqref{eq:Phi} 
is injective (but not onto), and that with a minor modification of our construction the map \( \Psi \) from~\eqref{eq:Psi} may be turned into an injective one. Using these facts and the 
Borel Schr\"oder-Bernstein theorem~\cite[Theorem 15.7]{Kechris1995}, 
we can further improve the equivalence between the codings \( F(\mathbb{U}) \) and \( \mathcal{M} \) above by showing that indeed there is a \emph{Borel isomorphism} 
\( \Theta \colon \mathcal{M} \to F(\mathbb{U}) \) such that \( \Theta(c) \cong^i M_c \) for every \( c \in \mathcal{M} \) (see~\cite[Theorem 14.1.3]{gaobook}).
\end{remark}

Given the equivalence between the two spaces \( F(\mathbb{U}) \) and \( \mathcal{M} \) of codings  for Polish metric spaces presented above, 
\emph{from this point onward we will denote by \( \MS_\Sep \) the standard Borel space of all 
 Polish metric spaces} (where \( \Sep \) is for separable), without specifying which of the above codings we are actually using (although for technical reasons 
 in our proofs we will prefer to deal with \( F(\mathbb{U}) \) in most cases). It is straightforward to check that the isometry relation \( \cong^i \) is an analytic equivalence relation on \( \MS_\Sep \).

In what follows we will be interested in some natural subclasses of \( \MS_\Sep \), including the following:
\begin{itemizenew}
\item
\textbf{compact} Polish metric spaces: up to isometry, these are the spaces in \( K(\mathbb U) \) (see the discussion after Theorem~\ref{thm:gromov});
\item
\textbf{locally compact} Polish metric spaces (such as the finite-dimensional Euclidean spaces \( \RR^n \));
\item
\textbf{\( \sigma \)-compact} Polish metric spaces, i.e.\ spaces which are a union of countable many compact spaces: every locally compact Polish metric space is also \( \sigma \)-compact (but the converse does not hold);
\item
\textbf{countable} Polish metric spaces: all countable spaces are trivially \( \sigma \)-compact, but not necessarily locally compact;
\item
\textbf{discrete} Polish metric spaces: of course, every discrete space is also locally compact and countable (but the converse does not hold);
\item
Polish \textbf{ultrametric} spaces: recall that \( d \) is an \emph{ultrametric} if it satisfies the following strengthening of the triangular inequality:
\begin{equation} \label{eq:ultra} 
d(x,z) \leq \max \{ d(x,y), d(y,z) \}.
 \end{equation}
Examples of Polish ultrametric spaces are the Cantor space \( \pre{\Nat}{2} \) and the Baire space \( \pre{\Nat}{\Nat} \) (equipped with the usual distance 
\[
d(x,y) = \inf \{ 2^{-n}  \mid \forall i < n\, (x_i = y_i) \} 
\] 
for all \( x = (x_i)_{i \in \Nat} \) and \( y = (y_i)_{i \in \Nat} \)), and the space \( \QQ_p \) of \( p \)-adic numbers for any prime \( p \);
\item
\textbf{zero-dimensional} Polish metric spaces, i.e.\ spaces whose topology admits a basis consisting of clopen sets: clearly, every ultrametric Polish space is zero-dimensional, but there are zero-dimensional Polish metric 
spaces whose metric is not an ultrametric%
\footnote{However, for every zero-dimensional Polish metric space \( X = (X,d) \) there is a complete ultrametric \( d' \) on \( X \) such that \( d' \) is compatible with \( d \), i.e.\ it generates
the same topology.} 
(for example, take the Cantor ternary set \( E_{\frac{1}{3}} \subseteq [0;1] \) with the metric induced by \( \RR \));
\item
\textbf{Heine-Borel} Polish metric spaces, i.e.\ spaces whose closed bounded subsets are always compact (e.g.\ the real line \( \RR \)). 
\end{itemizenew}

Some of the above classes form standard Borel spaces, as shown in the next proposition.

\begin{proposition} \label{prop:standardBorelmetricspaces}
The following classes of Polish metric spaces are Borel subsets of the standard Borel space \( \MS_\Sep \), and thus they are standard Borel themselves:
\begin{enumerate-(1)}
\item \label{prop:standardBorelmetricspaces-1}
compact Polish metric spaces;
\item \label{prop:standardBorelmetricspaces-2}
Polish ultrametric spaces;
\item \label{prop:standardBorelmetricspaces-3}
Heine-Borel Polish metric spaces.
\end{enumerate-(1)}
\end{proposition}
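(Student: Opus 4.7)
I would work in the coding $\mathcal{M}$, where the distance data is directly encoded and the dense subsequence $([n]_{E_c})_{n\in\Nat}$ of $M_c$ is available from the indexing (in the $F(\mathbb{U})$ picture, the same role would be played by the Borel selectors $\psi_n$ of Notation~\ref{notation:psi_n}). The general strategy is that each of the three properties can be recast as a property of the countable dense subspace, and the countable dense data depends on $c\in\mathcal{M}$ in a Borel (indeed very low complexity) way.

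For~\ref{prop:standardBorelmetricspaces-1}, I would use that a complete metric space is compact iff it is totally bounded, and that total boundedness is preserved and reflected by passing to any dense subspace. Hence $M_c$ is compact iff
\[
\forall k\in\Nat\, \exists N\in\Nat\, \forall m\in\Nat\, \exists n\leq N\, \bigl(c(n,m)<1/k\bigr),
\]
which is arithmetic in $c$ (an easy $\Pi^0_3$ condition) and thus Borel. For~\ref{prop:standardBorelmetricspaces-2}, the ultrametric inequality~\eqref{eq:ultra} is closed under limits, hence $M_c$ is ultrametric iff its dense subspace $(\Nat/E_c,d_c)$ is, iff
\[
\forall n,m,k\in\Nat\, \bigl(c(n,k)\leq \max\{c(n,m),\,c(m,k)\}\bigr),
\]
a closed (hence Borel) condition on $\mathcal{M}$.

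For~\ref{prop:standardBorelmetricspaces-3}, I would first recall the standard reduction: a complete metric space $X$ is Heine-Borel iff every closed ball is compact, iff there exists a single point $p\in X$ such that every closed ball around $p$ is compact. The nontrivial direction is that if closed balls around one $p$ are compact, then for any other $q\in X$ and $r>0$, $\overline{B(q,r)}$ is closed inside the compact set $\overline{B(p,r+d(p,q))}$, hence itself compact; then every closed bounded set sits inside such a ball. Applying this with $p=[0]_{E_c}$ and using that total boundedness of a ball reduces to total boundedness of its intersection with the dense subsequence (up to refining $\varepsilon$), the Heine-Borel property of $M_c$ becomes
\[
\forall r,\ell\in\Nat\, \exists N\in\Nat\, \forall m\in\Nat\, \bigl(c(0,m)\leq r\;\to\;\exists n\leq N\,(c(n,m)<1/\ell)\bigr),
\]
again manifestly Borel in $c$.

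The main obstacle is the characterization used in~\ref{prop:standardBorelmetricspaces-3}: one has to argue carefully that Heine-Borel is equivalent to total boundedness of every closed ball centered at the \emph{fixed} dense-sequence point $[0]_{E_c}$, and that such total boundedness is faithfully witnessed by centers drawn from the dense subsequence (a standard $\varepsilon/2$ approximation argument). Once this equivalence is in place, the three characterizations are all first-order arithmetic over the real-valued code $c$, so they define Borel subsets of $\mathcal{M}$, and transferring to $F(\mathbb{U})$ via the Borel equivalence of the two codings yields the result for $\MS_\Sep$.
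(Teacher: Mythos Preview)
Your proof is correct and follows the same underlying strategy as the paper (reduce each property to a condition on a fixed countable dense set), but you work in the coding $\mathcal{M}$ and write out explicit arithmetic formulas, whereas the paper works in $F(\mathbb{U})$ with the Borel selectors $\psi_n$. For~\ref{prop:standardBorelmetricspaces-2} the two arguments are essentially identical. For~\ref{prop:standardBorelmetricspaces-1} the paper simply cites that $K(\mathbb{U})$ is a Borel subset of $F(\mathbb{U})$, while you instead use the equivalence ``complete $+$ totally bounded $\iff$ compact'' to produce an explicit $\Pi^0_3$ formula. For~\ref{prop:standardBorelmetricspaces-3} both use the same key reduction (Heine-Borel $\iff$ every closed ball around one fixed point is compact), but the paper proves a small lemma that the assignment $X \mapsto \cl(B_r(\psi_n(X)))$ is Borel from $F(\mathbb{U})$ to itself and then checks membership in $K(\mathbb{U})$, whereas you again unwind compactness as total boundedness of the dense trace. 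Your route is a bit more elementary and yields explicit Borel-hierarchy bounds; the paper's route is more structural and its auxiliary claim (Borelness of $X \mapsto \cl(B_r(\psi_n(X)))$) is reused later in the paper. One small point worth making explicit in your write-up of~\ref{prop:standardBorelmetricspaces-3}: the set $\{[m] : c(0,m) \leq r\}$ need not be dense in the \emph{closed} ball $\overline{B([0],r)}$, so the cleanest justification of your displayed formula is to observe that $\overline{B([0],s)} \subseteq B([0],r)$ for any real $s$ and integer $r > s$, and that the dense sequence \emph{is} dense in the open ball $B([0],r)$; this is exactly the ``pass to a slightly larger radius'' step, and it makes your equivalence airtight.
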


\begin{proof} 
\ref{prop:standardBorelmetricspaces-1} The collection of compact Polish metric spaces is \( K(\mathbb{U}) \), which is easily seen to be a Borel subset of \( F(\mathbb{U}) \) (see~\cite[Exercise 12.11]{Kechris1995}).

\ref{prop:standardBorelmetricspaces-2} It is not hard to check that a metric \( d \) on \( X \) is an ultrametric if and only if the inequality~\eqref{eq:ultra} is satisfied by all points \( x,y,z \) in a given dense subset of \( X \). 
 Thus using the Borel selectors \( \psi_n \) we get that \( \emptyset \neq X  \in F(\mathbb{U}) \) is a Polish ultrametric space if and only if the following Borel condition holds (where \( d \) may indifferently be thought of as the distance on \( X \) or on the whole \( \mathbb{U} \) because 
 \( X \subseteq \mathbb{U} \)):
\[ 
\forall n,m,k \in \Nat \, (d(\psi_n(X),\psi_k(X)) \leq \max \{ d(\psi_n(X),\psi_m(X)), d(\psi_m(X),\psi_n(X)) \}).
 \] 

\ref{prop:standardBorelmetricspaces-3} Given \( x \in X \) and \( r \in \RR_{> 0} \), let \( B_r(x) = \{ y \in X \mid d(x,y) < r \} \) be the open ball of \( X \)  with center \( x \) and radius \( r \), and let \( \cl \) be 
the closure operator. It is not difficult to see that \( X \in F(\mathbb{U}) \) is Heine-Borel if and only if \( \rho(x) = + \infty \) \emph{for all} \( x \in X \), if and only if  \( \rho(x) = + \infty \) \emph{for some} 
\( x \in X \), where 
\[ 
\rho(x) = \sup \{ r \in \RR_{> 0} \mid \cl(B_r(x)) \text{ is compact} \}.
 \] 
We need the following claim.
\begin{claim} \label{claim:HB}
For every \( r \in \RR_{> 0 } \) and \( n \in \Nat \), the map
\[ 
F(\mathbb{U}) \to F(\mathbb{U}) \qquad X \mapsto \cl(B_r(\psi_n(X)))
 \] 
is Borel.
\end{claim}

\begin{proof}[Proof of the claim]
Let \( U \subseteq \mathbb{U} \) be nonempty and open: we must show that the set
\[ 
P_U = \{ X \in F(\mathbb{U}) \mid \cl(B_r(\psi_n(X))) \cap U \neq \emptyset \}
 \] 
is Borel. Since 
\begin{align*}
\cl(B_r(\psi_n(X))) \cap U \neq \emptyset & \iff B_r(\psi_n(X)) \cap U \neq \emptyset \\
& \iff D \cap B_r(\psi_n(X)) \cap U \neq \emptyset
\end{align*}
for \( D \) any dense subset of \( X \), we have that
\[ 
P_U = \{ X \in F(\mathbb{U}) \mid \exists k \in \Nat \, (d(\psi_k(X),\psi_n(X)) < r \text{ and } \psi_k(X) \in U) \},
 \] 
whence \( P_U \) is Borel.
\end{proof}
It follows from Claim~\ref{claim:HB} that the subsequent condition characterizing Heine-Borel Polish metric spaces is Borel:
\[
\forall n \in \Nat \, (\cl(B_n(\psi_0(X))) \in K(\mathbb{U})).  \qedhere
\]
\end{proof}

In contrast to Proposition~\ref{prop:standardBorelmetricspaces}, we are now going to show that some of the above mentioned natural classes of Polish metric spaces are not Borel subsets 
of \( \MS_\Sep \), and thus they form non-standard Borel spaces. 
We first fix some notation.
\begin{notation} \label{not:trees}
We denote by \( \mathrm{Tr} \) the Polish space of countable (descriptive set-theoretical) trees on \(\Nat\) (see~\cite[Exercise 4.32]{Kechris1995}), and by \( \mathrm{WF} \subseteq \mathrm{Tr}\) the set of \emph{well-founded trees}, where a tree \( T \subseteq \pre{<\Nat}{\Nat} \) is called well-founded if there is no \( x = (x_i)_{i \in \Nat} \in \pre{\Nat}{\Nat} \) such that \( x \restriction n =  (x_0, \dotsc, x_{n-1}) \in T \) for all \( 0 \neq n \in \Nat \). 
\end{notation}

Recall from~\cite[Theorem 27.1]{Kechris1995} that \( \mathrm{WF} \) is a proper coanalytic (hence non-Borel) subset of the Polish space \( \mathrm{Tr} \).

\begin{proposition}[Camerlo-Marcone-Motto Ros] \label{prop:discreteandlocallycompactarecoanalytic}
There is a Borel map \( f \colon \mathrm{Tr} \to \MS_\Sep \) such that for every \( T \in \mathrm{Tr} \):
\begin{enumerate-(1)}
\item
\( f(T) \) is a Polish ultrametric  space;
\item
if \( T \) is well-founded, then \( f(T) \) is discrete (hence countable, locally compact, and \(\sigma\)-compact);
\item
if \( T \) is not well-founded, then \( f(T) \) is not \(\sigma\)-compact (hence neither countable nor locally compact nor discrete).
\end{enumerate-(1)}
In particular, the classes of discrete, countable, locally compact, and \(\sigma\)-compact Polish metric spaces are all non-Borel subsets of \( \MS_\Sep \), and thus they are non-standard Borel spaces. The same applies to the corresponding subclasses of Polish ultrametric spaces.
\end{proposition}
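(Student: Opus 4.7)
The plan is to build $f(T)$ as the ultrametric completion of the nodes of an auxiliary tree $T^*$, inflated from $T$ so that $[T^*]$ is topologically large whenever $T$ is ill-founded.

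\emph{Auxiliary tree.} Given $T \in \mathrm{Tr}$, let $T^*$ consist of all finite sequences $s=(s_0,\ldots,s_{n-1})$ whose even-coordinate subsequence $(s_0,s_2,\ldots)$ belongs to $T$; the odd coordinates are unrestricted. The even-projection shows that if $T$ is well-founded so is $T^*$. Conversely, for every $\alpha \in [T]$ and every $\gamma \in \pre{\Nat}{\Nat}$ the interleaving $\alpha\otimes\gamma := (\alpha_0,\gamma_0,\alpha_1,\gamma_1,\ldots)$ belongs to $[T^*]$, and $\gamma \mapsto \alpha\otimes\gamma$ is a homeomorphism of $\pre{\Nat}{\Nat}$ onto a closed subset of $[T^*]$.

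\emph{The space and its Borelness.} I equip the node set $\{x_s : s \in T^*\}$ with the standard tree ultrametric $d(x_s,x_t)=2^{-|s \wedge t|}$, where $s \wedge t$ is the longest common prefix, and let $f(T)$ be the Cauchy completion. The completion adds exactly one limit $x_\alpha$ per $\alpha \in [T^*]$, the formula $d(x_u,x_v)=2^{-|u \wedge v|}$ persists to the whole space, and consequently $\alpha \mapsto x_\alpha$ is an isometric embedding of $[T^*]$ (with the Baire ultrametric) onto a closed subset of $f(T)$. To realize $f$ as a Borel map into $\MS_\Sep$ I use Coding~\ref{coding2}: fix once and for all an enumeration $(s_n)_{n \in \Nat}$ of $\pre{<\Nat}{\Nat}$ with $s_0=()$, set $\iota_T(n):=n$ if $s_n \in T^*$ and $\iota_T(n):=0$ otherwise, and let
\[
c_T(n,m) := 2^{-|s_{\iota_T(n)} \wedge s_{\iota_T(m)}|},
\]
with the convention that this value is $0$ whenever $s_{\iota_T(n)}=s_{\iota_T(m)}$. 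The ultrametric inequality holds automatically, so $c_T \in \mathcal{M}$, and because ``$s \in T^*$'' is a clopen condition on $T$ for each fixed $s$, the map $T \mapsto c_T$ is Borel; the dummy indices collapse to the $()$-point in the quotient, so $M_{c_T}$ is exactly $f(T)$.

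\emph{Dichotomy and conclusion.} If $T \in \mathrm{WF}$ then $[T^*]=\emptyset$, so the completion adds nothing; for any $s \in T^*$ and $t \in T^* \setminus \{s\}$ one has $d(x_s,x_t) \geq 2^{-|s|}$, hence the open ball of radius $2^{-|s|-1}$ around $x_s$ reduces to $\{x_s\}$ and $f(T)$ is discrete. If instead $T \notin \mathrm{WF}$, pick $\alpha \in [T]$: composing $\gamma \mapsto \alpha\otimes\gamma$ with $\beta \mapsto x_\beta$ yields a homeomorphism of $\pre{\Nat}{\Nat}$ onto a closed subset of $f(T)$, so $f(T)$ is not $\sigma$-compact and therefore neither locally compact nor countable nor discrete. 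The final clause of the proposition then follows because $f^{-1}$ of each of the four classes equals $\mathrm{WF}$, which is properly coanalytic; the same reasoning restricted to Polish ultrametric spaces gives the last assertion. The delicate technical point is the doubling $T \mapsto T^*$: without the free odd coordinates an ill-founded $T$ with only one infinite branch would yield an $f(T)$ homeomorphic to the compact space $\omega+1$, too tame to witness failure of $\sigma$-compactness.
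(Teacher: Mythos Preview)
Your proof is correct and follows essentially the same approach as the paper: both arguments inflate $T$ to a tree $T^*$ by inserting free odd coordinates, so that a single branch of $T$ spawns a closed copy of $\pre{\Nat}{\Nat}$ in the body of $T^*$. The only difference is packaging: the paper realizes $f(T)$ directly as a closed subset of $\pre{\Nat}{\Nat}$ (via a $+1$ shift and eventual-zero terminator), whereas you build the same space as the abstract completion of the tree ultrametric on $T^*$ and then feed it into Coding~\ref{coding2}.
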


\begin{proof}
Given \( T \in \mathrm{Tr} \), let \( X_T \) be the subspace of \( \pre{\Nat}{\Nat} \) consisting of those \( x = (x_i)_{i \in \Nat} \) such that:
\begin{itemizenew}
\item
there is \( N_x \leq \omega \) such that \( x_i \neq 0 \) for all \( i < N_x \) and \( x_i = 0 \) for all \( i \geq N_x \) (when \( N_x = \omega \), this just means that \( x_i \neq 0 \) for all \( i \in \Nat \));
\item
for each \( n < N_x \), the sequence \( (x_{2i}-1)_{2i \leq n} \) belongs to \( T \). 
\end{itemizenew}
It is not hard to check that \( X_T \) is always closed subspace of \( \pre{\Nat}{\Nat} \), hence it is a Polish ultrametric space. Moreover, each \( x \in X_T \) with \( N_x < \omega \)  is isolated in \( X_T \), and \( \{ x \in X_T \mid N_x < \omega \} \) is dense in \( X_T \). The latter observation  allows us to straightforwardly construe the map \( T \mapsto X_T \) as a Borel map \( f \colon \mathrm{Tr} \to \MS_\Sep \).

Assume first that \( T \) is well-founded. Then there is no \( x \in T \) with \( N_x = \omega \), as otherwise the sequence \( (x_{2i}-1)_{i \in \Nat} \) would contradict the fact that \( T \) is well-founded. Therefore \( N_x < \omega \) for all \( x \in X_T \), and hence \( X_T \) is discrete.

Assume now that \( T \) is not well-founded, and let \( (x_i)_{i \in \Nat } \) be a witness of this. Let
\[ 
F_x = \{ y = (y_i)_{i \in \Nat} \in X_T  \mid \forall j \in \Nat \, (y_{2j} = x_j+1) \}.
 \] 
(In particular, \( N_y = \omega \) for all \( y \in F_x \).)	
The set \( F_x \) is clearly closed in \( X_T \) and homeomorphic to \( \pre{\Nat}{\Nat} \) via the map \( (y_i)_{i \in \Nat} \mapsto (y_{2j+1} - 1)_{j \in \Nat} \): thus \( X_T \) is not \(\sigma\)-compact by a theorem of Hurewicz (see~\cite[Theorem 7.10]{Kechris1995}).
\end{proof}

\begin{remark} \label{rmk:nonBorel}
It is not hard to see using the methods of descriptive set theory (see~\cite{Kechris1995}) that the classes of discrete, countable and locally compact Polish metric spaces are all coanalytic, and thus their complexity is fully 
determined by Proposition~\ref{prop:discreteandlocallycompactarecoanalytic}. As for \( \sigma \)-compact spaces, Proposition~\ref{prop:discreteandlocallycompactarecoanalytic} shows that they form a so-called coanalytic-hard 
class, but we do not know its exact complexity: the best upper bound we are aware of is the one given by the definition of \(\sigma\)-compactness, which shows that it is at least a \( \mathbf{\Sigma}^1_2 \) class 
(i.e.\ the projection of a coanalytic set). Similarly, we do not know the complexity of the class of zero-dimensional Polish metric spaces: an upper bound is again \( \mathbf{\Sigma}^1_2 \), but it may be not  sharp.
\end{remark}

Let us now come back to the problem of classifying Polish metric spaces up to isometry. In their seminal paper~\cite{Gao2003}, Gao and Kechris showed that the isometry relation \( \cong^i \) (on the whole \( \MS_\Sep \))
is Borel reducible to an orbit equivalence relation.

\begin{theorem}[Gao-Kechris, see e.g.~{\cite[Theorem 14.2.3]{gaobook}}] \label{thm:allPolishupperbound}
The relation of isometry on Polish metric spaces is an essentially orbit equivalence relation.
\end{theorem}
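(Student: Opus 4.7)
The plan is to realize \( \cong^i \) on \( \MS_\Sep \) as (a Borel image of) the orbit equivalence relation of a Borel action of the isometry group \( \mathrm{Iso}(\mathbb{U}) \) of the Urysohn space on \( F(\mathbb{U}) \), and then to invoke the Becker--Kechris theorem to upgrade the Borel action to a continuous one on a Polish space. Throughout, we work with the coding \( \MS_\Sep = F(\mathbb{U}) \) from Coding~\ref{coding1}.

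First I would note that \( G = \mathrm{Iso}(\mathbb{U}) \), equipped with the topology of pointwise convergence (equivalently, the uniform-on-compacta topology), is a Polish group; this is a standard fact, since \( \mathbb{U} \) is Polish and the group operations are continuous in this topology. Next, \( G \) acts on \( F(\mathbb{U}) \) in the natural way, \( (g, X) \mapsto g(X) \). The heart of the argument is the following equivalence: for every \( X, Y \in F(\mathbb{U}) \),
\[
X \cong^i Y \iff \exists g \in G \, (g(X) = Y).
\]
The implication from right to left is obvious. For the converse, given an isometry \( \varphi \colon X \to Y \), use the property of \( \mathbb{U}_X \) recalled in Coding~\ref{coding1}(1) to extend \( \varphi \) to an isometry \( \varphi^* \colon \mathbb{U}_X \to \mathbb{U}_Y \); composing with the Urysohn isometries from \( \mathbb{U} \) onto \( \mathbb{U}_X \) and from \( \mathbb{U}_Y \) onto \( \mathbb{U} \) (which exist by the Urysohn property, Coding~\ref{coding1}(2)) yields the required \( g \in G \) with \( g(X) = Y \).

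Second, I would verify that the action of \( G \) on \( F(\mathbb{U}) \) is Borel, i.e.\ for each \( g \in G \) the map \( X \mapsto g(X) \) is Borel and, more importantly, the joint map \( G \times F(\mathbb{U}) \to F(\mathbb{U}) \) is Borel with respect to the Effros Borel structure. Using the generators~\eqref{eq:EffrosBorel}, it suffices to check that for every nonempty open \( U \subseteq \mathbb{U} \) the set \( \{(g,X) \mid g(X) \cap U \neq \emptyset\} \) is Borel; replacing \( X \) by the dense sequence of Borel selectors \( (\psi_n(X))_{n \in \Nat} \) from Notation~\ref{notation:psi_n}, this set equals
\[
\{(g,X) \mid \exists n \in \Nat \, (g(\psi_n(X)) \in U)\},
\]
which is Borel since each \( (g,X) \mapsto g(\psi_n(X)) \) is Borel (being continuous in \( g \) and Borel in \( X \)).

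Finally, by the Becker--Kechris theorem (see e.g.~\cite[Theorem 5.2.1]{gaobook}), any Borel action of a Polish group on a standard Borel space can be realized as a continuous action on a Polish space with the same underlying Borel structure: applied to our action, it produces a Polish topology \( \tau \) on \( F(\mathbb{U}) \), generating the Effros Borel structure, such that the \( G \)-action becomes continuous. The identity \( \id \colon F(\mathbb{U}) \to (F(\mathbb{U}), \tau) \) is then a Borel reduction of \( \cong^i \) to the orbit equivalence relation \( E^{(F(\mathbb{U}),\tau)}_G \), showing that \( \cong^i \) is an essentially orbit equivalence relation. The main technical obstacle is the extension step used in the left-to-right direction of the displayed equivalence: this is precisely where the universality and homogeneity properties of \( \mathbb{U} \) packaged into Coding~\ref{coding1} do all the work.
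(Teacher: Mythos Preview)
Your argument contains a genuine gap at the step you yourself flag as ``the heart of the argument'': the displayed equivalence
\[
X \cong^i Y \iff \exists g \in \mathrm{Iso}(\mathbb{U}) \, (g(X) = Y)
\]
is \emph{false} for arbitrary \( X,Y \in F(\mathbb{U}) \). The paper states this explicitly: not every partial isometry between closed subsets of \( \mathbb{U} \) extends to an autoisometry of \( \mathbb{U} \), so \( \cong^i \) is strictly coarser than \( E_{\mathrm{Iso}(\mathbb{U})}^{F(\mathbb{U})} \). Your proposed extension does not work because the ``Urysohn isometries'' \( \alpha \colon \mathbb{U} \to \mathbb{U}_X \) and \( \beta \colon \mathbb{U}_Y \to \mathbb{U} \) furnished by Coding~\ref{coding1}(2) are arbitrary isometries between abstract copies of the Urysohn space; there is no reason why \( \alpha \) should carry the given subset \( X \subseteq \mathbb{U} \) to the canonical copy of \( X \) sitting inside \( \mathbb{U}_X \), nor why \( \beta \) should carry the canonical copy of \( Y \subseteq \mathbb{U}_Y \) back to the original \( Y \subseteq \mathbb{U} \). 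Hence \( g = \beta \circ \varphi^* \circ \alpha \) is an autoisometry of \( \mathbb{U} \), but in general \( g(X) \neq Y \).

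The paper's fix is precisely to replace \( X \) by a canonically positioned isometric copy before attempting the extension. It uses the Borel map \( G = \Phi \circ \Psi \colon F(\mathbb{U}) \to F(\mathbb{U}) \), where \( G(X) = \varphi_{M_{\Psi(X)}}(M_{\Psi(X)}) \) is the image of (a copy of) \( X \) under the \emph{specific} isometry \( \varphi_{M_{\Psi(X)}} \colon \mathbb{U}_{M_{\Psi(X)}} \to \mathbb{U} \) coming from the Urysohn construction. Now if \( X \cong^i Y \), the Kat\v{e}tov extension \( \varphi^* \colon \mathbb{U}_{M_{\Psi(X)}} \to \mathbb{U}_{M_{\Psi(Y)}} \) can be conjugated by these \emph{particular} isometries \( \varphi_{M_{\Psi(X)}} \) and \( \varphi_{M_{\Psi(Y)}} \) (rather than arbitrary ones), and by construction the resulting autoisometry of \( \mathbb{U} \) does send \( G(X) \) to \( G(Y) \). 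Thus \( G \) is a Borel reduction of \( \cong^i \) to \( E_{\mathrm{Iso}(\mathbb{U})}^{F(\mathbb{U})} \), and the Becker--Kechris step you describe then finishes the argument. In short: your outline is correct in spirit, but you must first pass to canonical copies via \( \Phi \circ \Psi \) before the extension step goes through.
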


\begin{proof}
Consider the natural action of \( \mathrm{Iso}(\mathbb{U}) \), the group of isometries of the Urysohn space \( \mathbb{U} \) equipped with the 
pointwise convergence topology, on the space \( F(\mathbb{U}) \), that is for \( g \in \mathrm{Iso}(\mathbb{U}) \) and \( X \in F(\mathbb{U}) \) let
\[ 
g.X = g(X).
 \] 
Notice that event though it is obvious that if \( X \mathrel{E_{\mathrm{Iso}(\mathbb{U})}^{F(\mathbb{U})}} Y \) then \( X \cong^i Y \), the converse may badly fail --- 
not all partial isometries \(\varphi\) between two infinite elements of \( F( \mathbb{U}) \) 
can be extended to an isometry of the whole \( \mathbb{U} \): thus \( \cong^i \) is strictly coarser than the orbit equivalence relation 
\( E_{\mathrm{Iso}(\mathbb{U})}^{F(\mathbb{U})}\). However, we claim that the Borel map 
\[ 
G =  \Phi \circ \Psi \colon F(\mathbb{U}) \to F(\mathbb{U}) 
\] 
(where \( \Psi \) and \( \Phi \) are 
as in~\eqref{eq:Psi} and~\eqref{eq:Phi}, respectively) is a reduction of \( \cong_i \) to \( E_{\mathrm{Iso}(\mathbb{U})}^{F(\mathbb{U})} \). 

Let \( X,Y \in F(\mathbb{U}) \). If 
\( G(X) \mathrel{E_{\mathrm{Iso}(\mathbb{U})}^{F(\mathbb{U})}} G(Y) \), then \( G(X) \cong^i G(Y) \), whence also \( X \cong^i Y \) because \( G(X) = (\Phi \circ \Psi)(X) \cong^i M_{\Psi(X)} \cong^i X \) and 
\( G(Y) = (\Phi \circ \Psi)(Y) \cong^i M_{\Psi(Y)} \cong^i Y \). Conversely, assume that 
\( X \cong^i Y \), so that there is an isometry \( \varphi \colon M_{\Psi(X)} \to M_{\Psi(Y)} \). By the Kat\v{e}tov construction, the map \( \varphi \) can be extended to an isometry 
\( \varphi^* \colon \mathbb{U}_{M_{\Psi(X)}} \to \mathbb{U}_{M_{\Psi(Y)}} \).
Let \( \varphi_{M_{\Psi(X)}} \) and \( \varphi_{M_{\Psi(Y)}} \) be as in~\eqref{eq:phi_X}: then \(  \varphi_{M_{\Psi(Y)}} \circ \varphi^* \circ \varphi_{M_{\Psi(X)}}^{-1} \in \mathrm{Iso}(\mathbb{U}) \) witnesses
\[ 
\varphi_{M_{\Psi(X)}}(M_{\Psi(X)}) \mathrel{E_{\mathrm{Iso}(\mathbb{U})}^{F(\mathbb{U})}} \varphi_{M_{\Psi(Y)}}(M_{\Psi(Y)}),
 \] 
whence \( G(X) \mathrel{E_{\mathrm{Iso}(\mathbb{U})}^{F(\mathbb{U})}} G(Y) \) because by definition
\( G(X) = (\Phi \circ \Psi)(X) = \varphi_{M_{\Psi(X)}}(M_{\Psi(X)}) \) and \( G(Y) = (\Phi \circ \Psi)(Y) = \varphi_{M_{\Psi(Y)}}(M_{\Psi(Y)}) \).
\end{proof}

Since, as already observed, orbit equivalence relations may be very complex, it is natural to ask whether the classification provided by Theorem~\ref{thm:allPolishupperbound} is optimal: this is indeed the case, as shown independently by Gao-Kechris~\cite{Gao2003} and Clemens~\cite{clemensisometry}.

\begin{theorem}[Clemens, Gao-Kechris, see e.g.~{\cite[Corollary 14.3.6]{gaobook}}] \label{thm:lowerboundallPolish}
For every Polish group \( G \) acting continuously on a Polish space \( Y \) we have
\[ 
E_G^Y \leq_B {\cong^i}.
 \] 
Thus the isometry relation \( \cong^i \) on the whole \( \MS_\Sep \) is Borel bi-reducible with \( E^\infty_{H([0;1]^{\NN})} \), the most complex orbit equivalence relation.
\end{theorem}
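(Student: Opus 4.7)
The plan is to reduce to the universal case and then build an explicit Borel encoding of a universal group action into Polish metric spaces up to isometry. By the universality of $E^\infty_{H([0;1]^\NN)}$ recalled just before the theorem, the first assertion is equivalent to the single reduction $E^\infty_{H([0;1]^\NN)} \leq_B {\cong^i}$, and the second assertion (Borel bi-reducibility) then follows immediately by combining this with Theorem~\ref{thm:allPolishupperbound}. Using Uspenski\v{\i}'s theorem I would replace $H([0;1]^\NN)$ by $\mathrm{Iso}(\mathbb{U})$, which is also a universal Polish group, so it suffices to find a continuous action of $\mathrm{Iso}(\mathbb{U})$ on some Polish space $Y^*$ whose orbit equivalence relation is $\leq_B$-maximal, together with a Borel reduction $f \colon Y^* \to \MS_\Sep$ from $E^{Y^*}_{\mathrm{Iso}(\mathbb{U})}$ to $\cong^i$.

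The encoding $f$ is the heart of the matter. The idea is to associate to each $y \in Y^*$ a Polish metric space $M_y$ consisting of an isometric copy of $\mathbb{U}$ together with, at each point of a fixed Borel-dense sequence in $\mathbb{U}$, a metric \emph{tag} depending in a rigid and Borel way on both the index of the point and on $y$ itself. The tags are chosen to be pairwise non-isometric and mutually separated by distances strictly exceeding the diameter of the $\mathbb{U}$-core (so that no point of one tag can be confused with a point of another, or of the core), and hence every isometry $M_y \to M_{y'}$ is forced to restrict to a bijection of the tag-sets; this bijection is in turn induced by an element $g \in \mathrm{Iso}(\mathbb{U})$, which then witnesses that $y$ and $y'$ lie in the same $\mathrm{Iso}(\mathbb{U})$-orbit. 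Conversely, any $g$ witnessing $y \sim y'$ should extend, using the Kat\v{e}tov-type arguments already invoked in the proof of Theorem~\ref{thm:allPolishupperbound}, to a genuine isometry $M_y \to M_{y'}$. Borelness of $f$ is immediate from the explicitness of the construction and from the availability of the Borel selectors of Notation~\ref{notation:psi_n}.

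The main obstacle is the rigidity requirement: the tags must be rich enough to kill all unintended isometries --- both spurious automorphisms of the $\mathbb{U}$-core and tag-permutations not induced by $\mathrm{Iso}(\mathbb{U})$ --- while remaining simple enough that $y \mapsto M_y$ stays Borel. This balance is the technical heart of both Clemens' and Gao--Kechris' original constructions; its resolution typically rests on carefully chosen distances that encode numerical information about $y$ (often via unbounded or rationally independent values), together with a preliminary reduction to a normal form in which the action on $Y^*$ is itself sufficiently rigid on generic orbits.
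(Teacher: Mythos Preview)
Your high-level strategy --- reduce to a universal group action and then encode orbits via a rigid metric-tagged space --- is reasonable in outline and closer in spirit to the Gao--Kechris argument than to the proof the paper actually presents. But the concrete encoding you sketch has a real gap: the Urysohn space $\mathbb{U}$ has \emph{infinite} diameter, so there are no ``distances strictly exceeding the diameter of the $\mathbb{U}$-core'' with which to separate tags from core points. Without such a separation mechanism you cannot force an arbitrary isometry $M_y \to M_{y'}$ to preserve the core and restrict to an element of $\mathrm{Iso}(\mathbb{U})$, and the backward direction of the reduction collapses. This is not a detail that can be deferred --- it is precisely the rigidity you yourself flag as ``the technical heart,'' and the device you name for achieving it does not exist for this choice of core. (Switching to a bounded Urysohn sphere would rescue the separation but then the Kat\v{e}tov-style extension step and the universality of the isometry group both need to be re-argued.)

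The paper follows Clemens and sidesteps the issue by working directly with each pair $(G,Y)$ rather than passing to a universal group. After arranging (via Becker--Kechris) that $Y$ is compact with $d_Y \leq 1$, and fixing a left-invariant $d_G \leq 1$ on $G$ satisfying $d_G(g,h) \geq \tfrac12 d_Y(g^{-1}.y,h^{-1}.y)$, one builds $X_z$ on $\widehat{G} \cup \{x^*_n \mid n \in \Nat\}$: the core is the $d_G$-completion $\widehat{G}$ of $G$, which \emph{does} have diameter $\leq 1$, and each distinguished point $x^*_n$ is placed at distance $(n+2)+\tfrac12 d_Y(y_n,g^{-1}.z)$ from $g \in G$, for $(y_n)$ a fixed dense sequence in $Y$. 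Now the core is intrinsically recognizable as the set of points realizing distances $\leq 1$, each $x^*_n$ is pinned down by its scale $n+2$, and a short Baire-category argument in $\widehat{G}$ finds $g \in G$ with $\varphi(g) \in G$, from which $g^{-1}.z_1 = \varphi(g)^{-1}.z_2$ can be read off directly. Beyond being more elementary, this direct approach has the bonus --- exploited later, e.g.\ in Theorem~\ref{thm:lowerboundzerodim} --- that topological properties of $G$ (zero-dimensionality, local compactness, completeness of the left-invariant metric) transfer to the spaces $X_z$, yielding lower bounds for isometry on restricted subclasses of $\MS_\Sep$; your universal-group route would not give this for free.
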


The proofs of Gao-Kechris and Clemens of Theorem~\ref{thm:lowerboundallPolish} are quite different. The former is quite complicated and consists in three main steps:
\begin{enumerate-(1)}
\item
show that the orbit equivalence relation \( E^\infty(\mathbb{U}) \) induced by the action of \( \mathrm{Iso}(\mathbb{U}) \) on \( \prod_{n \geq 1} F(\mathbb{U}^n) \) defined by
\[ 
\varphi . (R_n)_{n \geq 1} = (\varphi^{(n)}(R_n))_{n \geq 1}
 \] 
is \( \leq_B \)-maximal among orbit equivalence relations, where \( \varphi^{(n)} \colon \mathbb{U}^n \to \mathbb{U}^n \) is the Cartesian product 
\( \underbrace{\varphi \times \dotsc \times \varphi}_{n \text{ times}} \) (see~\cite[Section 2.E]{Gao2003});
\item
prove that for every Polish metric space \( X \)
\[ 
E^1(X) \leq_B {\cong}^i,
 \] 
where \( E^1(X) \) is the orbit equivalence relation induced by the action of \( \mathrm{Iso}(\mathbb{U}) \) on \( \pre{\Nat}{(F(X))} \) given by
\[ 
\varphi . (C_i)_{i \in \Nat} = (\varphi(C_i))_{i \in \Nat}
 \] 
(see~\cite[Section 2.G]{Gao2003});
\item
show that
\[ 
E^\infty(\mathbb{U}) \leq_B E^1(X)
 \] 
where \( X \) is the completion of \( \mathbb{U}^\infty = \bigcup_{n \in \Nat} \mathbb{U}^{(3^n)} \) (see~\cite[Section 2.H]{Gao2003}).
\end{enumerate-(1)}

Clemens' proof is more elementary and informative because, as we will see, it provides additional information on the restriction of \( \cong^i \) to certain subclasses of \( \MS_\Sep \). 
With this other approach, one aims at directly showing that \( E^Y_G \leq_B \cong_i \) for any orbit equivalence relation \( E_G^Y \) as in Theorem~\ref{thm:lowerboundallPolish}.
By~\cite[Theorem 2.6.6]{Becker:1996uq} there is always a continuous action of \( G \) on a compact Polish metric spaces \( Y' \) such that \( E^Y_G \leq_B E^{Y'}_G \), thus we can assume that \( Y \) itself be compact with metric \( d_Y \) bounded by \( 1 \). Fix a compatible left-invariant \( d_G \) on \( G \) such that:
\begin{enumerate-(a)}
\item
\( d_G \) is again bounded by \( 1 \) (which can always be obtained by replacing \( d_G \) with \( \frac{d_G}{1+d_G} \) if necessary);
\item \label{cond:metriconG}
for every \( y \in Y \) and \( g,h \in G \)
\[ 
d_G(g,h) \geq \frac{1}{2}d_Y (g^{-1} . y, h^{-1} . y).
 \] 
(To achieve this condition it is enough to check that since \( Y \) is compact, by~\cite[Lemma 10]{clemensisometry} we can replace if necessary 
\( d_G \) with the new metric \( d'_G \) defined by 
\[ 
d'_G(g, h) = \frac{1}{2} d_G(g, h) + \frac{1}{2} \sup \{d_Y(g^{-1} . y, h^{-1} . y) \mid y \in Y \} ;
\] 
notice that if \( d_G \) is complete, then so is \( d'_G \).)
\end{enumerate-(a)}

The key idea is then to define a Borel map
\[ 
Y \to \MS_\Sep \qquad z \mapsto X_z = (X_z,d_z)
 \] 
so that the \( G \)-orbit of \( z \) is somewhat encoded in the set of distances realized by \( d_z \). When e.g.\ \( d_G \) is complete and \( Y \subseteq \RR \), this can be done simply by letting \( X_z = G \cup \{ x^* \} \) 
(where \( x^* \) is a new point not belonging to \( G \)) and setting 
\begin{align*} 
d_z(g,h) & =  d_G(g,h) && \text{for every }g,h \in G \\
 d_z(x^*,g) & =  1 + \frac{1}{2} \, g^{-1} . z && \text{for every } g \in G. 
 \end{align*} 
 (The fact that \( d_z \) satisfies the triangular 
inequality follows from condition~\ref{cond:metriconG}.) On the one hand, if \( z_1,z_2 \in Y \) are not in the same \( G \)-orbit then the sets \(  \{ d_{z_i}(x^*,g) \mid g \in G \} \) of distances  realized by \( x^* \) (for 
\( i=1,2\)) are distinct, and since any isometry between \( X_{z_1} \) and \( X_{z_2} \) would send \( x^* \) to itself, this shows that \( X_{z_1} \not\cong^i X_{z_2} \). On the other hand, if \( z_2 = h . z_1 \) for some 
\( h \in G \), then the map \( f \colon X_{z_1} \to X_{z_2} \) sending \( x^* \) to itself and \( g \in G \) to \( hg \in G \) is an isometry (this uses the left-invariance of \( d_G \)).

The construction in the general case is a little bit more delicate.%
\footnote{The argument we are going to present here does not explicitly appear elsewhere, but it is suggested by Clemens himself  at the beginning of~\cite[Section 2]{clemensisometry}. The construction 
from~\cite{clemensisometry} (see also~\cite[Section 14.3]{gaobook}) is actually a further elaboration of the same argument: such more complicated construction can  easily be generalized in order 
to obtain more results which, unfortunately, we have to omit here for the sake of conciseness.}
 First of all, since \( d_G \) cannot be assumed to be complete,  in the definition of the spaces \( X_z \) we will have to replace \( G \) with its \( d_G \)-completion \( \widehat{G} \). 
The second obstacle is that possibly \( Y \) is not even continuously embeddable in \( \RR \), so it is not possible to literally encode a \( G \)-orbit using distances from a single distinguished point \( x^* \).
To overcome this difficulty, we fix  an enumeration 
\( (y_n)_{n \in \Nat} \) of a countable dense  \( D \subseteq Y \), and simultaneously encode all the distances from the given \( g^{-1}. z \) to each element of \( D \) 
(this suffices because two points of \( Y \) with identical distances from all \( y_n \)'s must in fact coincide); this move requires us to use countably many (rather than one) distinguished points for the encoding. 

\begin{proof}[Proof of Theorem~\ref{thm:lowerboundallPolish}]
Let \( Y \), \( d_Y \), \( d_G \), \( \widehat{G} \),  and \( (y_n)_{n \in \Nat} \) be as in the discussion above.
Fix \( z \in Y \). Consider the metric \( d_z \) on \( G \cup \{ x^*_n \mid n \in \Nat \} \) (where the \( x^*_n \) are all distinct and not in \( G \)) defined by the following clauses:
\begin{align*}
d_z(g,h) & =  d_G(g,h) && \text{for every }g,h \in G \\
d_z(x^*_n,x^*_m) & =  \max \{ n+2, m+2 \} && \text{for every }n,m \in \Nat\\
d_z(x^*_n,g) & =  (n+2)+\frac{1}{2}d_Y(y_n,g^{-1} . z) && \text{for every }n \in \Nat \text{ and } g \in G.
\end{align*}
(By condition~\ref{cond:metriconG} we get that \( d_z \) satisfies the triangular inequality, and thus it is a metric.) Let \( X_z \in \MS_\Sep \) be the completion of \( G \cup \{ x^*_n \mid n \in \Nat \} \) with respect to 
\( d_z \), and notice that \( X_z \) may be construed as a space on \( \widehat{G} \cup  \{ x^*_n \mid n \in \Nat \} \) 
because the points \( x^*_n \) are all isolated by \( d_z \). We claim that the Borel map 
\( z \mapsto X_z \) reduces \( E^Y_G \) to \( \cong^i \). Fix \( z_1,z_2 \in Y \). 

If \( z_2 = h . z_1 \) for some \( h \in G \), then the isometry of \( G \cup \{ x^*_n \mid n \in \Nat \} \) sending each \( x^*_n \) to itself and 
\( g \in G \) to \( hg \in G \) can clearly be extended to an isometry between \( X_{z_1} \) and \( X_{z_2} \). 

Conversely, let \( \varphi \colon M_{z_1} \to M_{z_2} \) be an isometry. Since points in \( \widehat{G} \) realize \( d_{z_i} \)-distances 
\( \leq 1 \) while the points \( x^*_n \) do not (for \( i = 1,2 \)), we have that \( \varphi \restriction \widehat{G} \) is an isometry between \( (\widehat{G}, d_{z_1}) \) and \( (\widehat{G}, d_{z_2}) \)
and that \( \varphi \) sends points of the form \( x^*_n \) to points of the same form. Moreover, since each \( x^*_n \) realizes only \( d_{z_i} \)-distances
\( \geq n+2 \) and at least a \( d_{z_i} \)-distance \( < n+3 \) (for \( i = 1,2 \)), we further have \( \varphi(x^*_n) = x^*_n \) for every \( n \in \Nat \). Notice also that there must be \( g \in G \subseteq \widehat{G} \) 
such that \( \varphi(g) \in G \): this is because \( \varphi \restriction \widehat{G} \) is in particular a homeomorphism and the subspace \( G \) of \( \widehat{G} \), being Polish, is \( G_\delta \) and dense in \( \widehat{G} \), hence
comeager in it. We claim that for such a \( g \) 
\[ 
g^{-1} . z_1 = \varphi(g)^{-1} . z_2 , 
\]
whence \( z_1 \mathrel{E^Y_G} z_2 \). Indeed, since \( \varphi \) is an isometry we have (for every \( n \in \Nat \))
\begin{multline*}
d_Y(y_n, g^{-1} . z_1) = 2 (d_{z_1}(x^*_n, g) - n -2) = 2 (d_{z_2}(\varphi (x^*_n), \varphi( g)) - n -2) \\ = 2 (d_{z_2}(x^*_n, \varphi( g)) - n -2) = d_Y(y_n, \varphi(g)^{-1} . z_2),
 \end{multline*}
so that \(  g^{-1} . z_1 \) and \( \varphi(g)^{-1} . z_2 \) have the same distances from all the points in the fixed dense set \( D \subseteq Y \). 
\end{proof}

Of course, the complexity of \( \cong^i \) may drastically drop down when restricting our attention to certain special subclasses of \( \MS_\Sep \), as illustrated by Theorem~\ref{thm:gromov}: other simple examples of this phenomenon are given by the following result from~\cite{Gao2003}.

\begin{theorem}[Gao-Kechris, see~{\cite[Proposition 4.2 and Theorem 4.4]{Gao2003}}] \label{thm:upperboundultrametric}
\begin{enumerate-(1)}
\item \label{thm:upperboundultrametric-1}
The isometry relation on all countable (hence, in particular, also on discrete) Polish metric spaces is classifiable by countable structures.
\item \label{thm:upperboundultrametric-2}
The isometry relation on all ultrametric Polish spaces is classifiable by countable structures.
\end{enumerate-(1)}
\end{theorem}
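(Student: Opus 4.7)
My strategy for both parts will be to exhibit a Borel reduction to the isomorphism relation on $\Mod^{\alzero}_\L$ for a suitable countable relational language $\L$, using the Borel selectors $\psi_n$ of Notation~\ref{notation:psi_n}. As a harmless preliminary step, I will replace each $\psi_n$ by $\psi_{h(n)}$ for some $h \colon \NN \to \NN$ all of whose fibres are infinite, so that every point of the selected dense subset of any $X$ is enumerated infinitely often and, consequently, every nonempty open subset of $X$ is hit by infinitely many indices.

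For~\ref{thm:upperboundultrametric-1}, I will take $\L = \{R_q : q \in \QQ^+\}$ with each $R_q$ binary, and send $X$ to the structure $\hat X$ on $\NN$ defined by $M_{\hat X} \models R_q[n,m] \iff d(\psi_n(X),\psi_m(X)) < q$. This map is manifestly Borel, so it remains to verify that it reduces isometry to isomorphism on countable Polish metric spaces. On such spaces the selected dense set equals the whole space. An isomorphism $\sigma \colon \hat X \cong \hat Y$ preserves every $R_q$, and hence every distance, so $\psi_n(X) \mapsto \psi_{\sigma(n)}(Y)$ is a well-defined isometric bijection of $X$ onto $Y$. Conversely, given an isometry $\varphi \colon X \to Y$, for each $y \in Y$ the fibres $\{n : \psi_n(X) = \varphi^{-1}(y)\}$ and $\{n : \psi_n(Y) = y\}$ are both countably infinite by our choice of selectors, so picking a bijection between them for each $y$ and taking the union produces a bijection $\sigma$ of $\NN$ satisfying $\psi_{\sigma(n)}(Y) = \varphi(\psi_n(X))$; this $\sigma$ witnesses $\hat X \cong \hat Y$.

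For~\ref{thm:upperboundultrametric-2}, the above encoding will not suffice: on uncountable ultrametric spaces, two isometric $X$ and $Y$ may have selected dense sets which are not isomorphic as countable metric spaces, so the naive reduction breaks down. Instead I will encode $X$ by its \emph{tree of rational-radius balls}. The key observation is that in an ultrametric, for each $q \in \QQ^+$ the relation
\[
n \sim_q m \iff d(\psi_n(X), \psi_m(X)) \leq q
\]
is an equivalence relation on $\NN$, and its classes biject canonically with the closed balls of $X$ of radius $q$ (of which there are countably many by separability). Take $\L'$ to contain a binary relation $\leq$, a binary equivalence $\equiv$, and a unary predicate $U_q$ for each $q \in \QQ^+$, and encode $X$ as the $\L'$-structure on $\NN \times \QQ^+$ (identified with $\NN$) in which $U_q$ picks out the $q$-slice, $(n,q) \equiv (n',q') \iff q = q'$ and $\psi_n(X)\sim_q\psi_{n'}(X)$, and $(n,q) \leq (n',q') \iff q \leq q'$ and $\psi_n(X)\sim_{q'}\psi_{n'}(X)$. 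The quotient modulo $\equiv$ is the labelled tree of rational-radius balls of $X$ ordered by reverse containment, and the assignment is Borel.

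The direction $X \cong^i Y \Rightarrow \hat X \cong \hat Y$ is straightforward: an isometry induces a radius- and containment-preserving bijection of rational-radius balls, which lifts to a bijection of $\NN \times \QQ^+$ preserving $\equiv, \leq, U_q$ thanks to the preliminary modification of the selectors (which guarantees that every $\equiv$-class is countably infinite, so corresponding fibres can be bijected). The main technical point will be the converse: reconstructing an isometry from an isomorphism of the trees. For this I will exploit completeness. Each $x \in X$ is characterized by its decreasing chain of balls $(\overline B_{1/k}(x))_{k \geq 1}$, whose intersection is $\{x\}$; and by Cantor's intersection theorem every maximal decreasing branch in the tree with diameters tending to $0$ produces exactly one such point. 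An isomorphism $\hat X \cong \hat Y$ therefore induces a bijection between the branches of the two trees, and thus between $X$ and $Y$; this bijection is distance-preserving because in an ultrametric $d(x,y)$ equals the infimum over $q \in \QQ^+$ such that $x$ and $y$ lie in a common $\sim_q$-ball, a quantity preserved by the tree isomorphism.
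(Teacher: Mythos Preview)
Your approach to both parts mirrors the paper's. For part~\ref{thm:upperboundultrametric-2} the argument is correct and differs from the paper's only in packaging: you use \emph{closed} balls of rational radius, an explicit carrier \( \NN \times \QQ^+ \), and an equivalence relation \( \equiv \) to pass to balls, while the paper works directly with \emph{open} balls as the elements of the structure \( A_X \) together with inclusion \( R^X \) and diameter predicates \( S_q^X \). The reconstruction of an isometry from a structure isomorphism via intersections of shrinking chains of balls (using completeness) is the same in both proofs. Your infinite-repetition trick on the selectors is a clean way to make the forward direction go through without fuss; the paper simply declares that direction ``clear.''

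For part~\ref{thm:upperboundultrametric-1}, however, your assertion that ``on such spaces the selected dense set equals the whole space'' is incorrect for countable Polish metric spaces that are not discrete. The Kuratowski--Ryll-Nardzewski selectors \( \psi_n \) are only guaranteed to enumerate a dense subset, and a limit point --- e.g.\ \( 0 \) in \( \{0\} \cup \{1/n : n \geq 1\} \) --- need not be among the \( \psi_n(X) \) (cf.\ Notation~\ref{notation:psi_n}, which asserts this only for discrete \( X \)). This breaks your forward direction: for two isometric copies \( X, Y \in F(\mathbb{U}) \) of such a space, the selectors may hit the limit point of one but miss that of the other, producing non-isometric dense subspaces and hence \( \hat X \not\cong \hat Y \). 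The paper's own proof explicitly restricts to the discrete case, where the assertion is valid; handling general countable spaces requires a further ingredient (for instance a Borel enumeration of \emph{all} points of a countable closed subset of \( \mathbb{U} \)).
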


\begin{proof}
\ref{thm:upperboundultrametric-1}
Let \( \mathcal{L} = \{ P_q \mid q \in \QQ_{> 0} \} \) be a first-order language in which each \( P_q \) is a binary relation. 
Let \( X = (X,d) \) be a discrete Polish metric space, and recall that in this case  
\( (\psi_n(X))_{n \in \Nat} \) is an enumeration of the whole \( X \). 
Let \( G_X = (\Nat, (P_q^{X})_{q \in \QQ_{>0}}) \) be the countable \( \mathcal{L} \)-structure obtained
 by setting for every \(n,m\in \Nat \) and every \( q \in \QQ_{>0} \)
\[ 
P_q^{X}(n,m) \iff d(\psi_n(X),\psi_m(X)) < q.
 \] 
The map \( X \mapsto G_X \) is easily seen to be Borel, so it remains to check that it reduces isometry to isomorphism. 

Fix two discrete Polish metric spaces \( X = (X,d_X) \) and \( Y = (Y,d_Y) \).
Clearly, if \( X \cong^i  Y \) then \( G_X \cong G_Y \). Conversely, let \( f \colon G_X \to G_Y \) be an isomorphism. We first check that for every \( n,m \in \Nat \)
\begin{equation} \label{eq:distpres}
d_X(\psi_n(X), \psi_m(X)) = d_Y(\psi_{f(n)}(Y), \psi_{f(m)}(Y)).
 \end{equation}
Indeed, for every \( q \in \QQ_{> 0} \) and \( n,m \in \Nat \)
\begin{multline*}
d_X(\psi_n(X),\psi_m(X)) < q \iff P_q^{X}(n,m) \\ \iff P_q^{Y}(f(n),f(m)) \iff d_Y(\psi_{f(n)}(Y), \psi_{f(m)}(Y)) < q.
 \end{multline*}
Let now \( \varphi \colon X \to Y \) be defined by setting \( \varphi(\psi_n(X)) = \psi_{f(n)}(Y)\) for all \( n \in \Nat \):
using~\eqref{eq:distpres} one can easily check that \( \varphi \) is a well-defined bijection and that it is distance preserving, hence we are done.

\ref{thm:upperboundultrametric-2}
We use some special properties of ultrametric spaces. Since these features will also be used in the sequel, for the reader's convenience we sum up all of them in the following fact
(the proofs are left to the reader as an easy but instructive exercise).
\begin{fact} \label{fct:basicsonultra}
Let \( X = (X,d) \) be an ultrametric space and denote by \( B_q(x) \) the open ball of \( X \) with center \( x \in X \) and radius \( q \in \RR_{> 0} \).
\begin{enumerate-(1)}
\item \label{fct:basicsonultra-1}
Using~\eqref{eq:ultra}, it is easy to check that for every \( x,y,z \in X \)
\[ 
\text{if } d(x,y) > d(y,z) \text{ then } d(x,z) = d(x,y).
 \] 
 \item \label{fct:basicsonultra-2}
By part~\ref{fct:basicsonultra-1}, every two open balls \( B =  B_q(x) \) and \( B' =  B_{q'}(x') \) are either disjoint, or one of them is included in the other one. In particular, if \( q = q' \) then
either \( B = B' \) or else \( B \cap B' = \emptyset \), and two points of \( X \) have distance \( < q \) if and only if they belong to the same 
open ball with radius \( q \).
\item \label{fct:basicsonultra-3}
By part~\ref{fct:basicsonultra-2}, each \( B_q(x) \) is actually clopen, and if \( y \in B_q(x)  \) then \( B_q(y) = B_q(x) \). 
Thus if \( X \) has density character \( \kappa \) there are at most \( \kappa \)-many open balls with rational radius
(because it is enough to consider only balls centered in any fixed dense subset of \(X \) of size \( \kappa \)).
  \end{enumerate-(1)}
\end{fact}
Let \( \mathcal{L} = \{ R \} \cup \{ S_q \mid q \in \QQ_{> 0} \} \) be a first-order language with \( R \) a binary relation and \( S_q \) a unary relation (for \( q \in \QQ_{>0} \)). Given a Polish ultrametric space 
\( X = (X,d) \), let \( A_X = (B_X, R^X, (S^X_q)_{q \in \QQ_{>0}}) \) be the \( \mathcal{L} \)-structure defined by
\begin{align*}
B_X & = \{ B_q(x) \mid x \in X \text{ and } q \in \QQ_{>0} \} \\
R^X (B_1,B_2) & \iff B_1 \subseteq B_2 \\
S^X_q(B) & \iff \mathrm{diam}(B) < q.
\end{align*}
(Notice that since \( X \) is separable, the \( \mathcal{L} \)-structure \( A_X \) is countable by Fact~\ref{fct:basicsonultra}\ref{fct:basicsonultra-3}.) The map \( X \mapsto A_X \) is easily seen to be Borel: we claim that it 
also reduces isometry to isomorphism. 

Fix two Polish ultrametric spaces  \( X = (X,d_X) \) and \( Y = (Y,d_Y) \).
Clearly, if \( X \cong^i  Y \) then \( A_X \cong A_Y \). Conversely, let \( f \colon A_X \to A_Y \) be an isomorphism. Given \( x \in X \), let \( B^x_n = f(B_{\frac{1}{n}}(x)) \). Using the fact that \( f \) is an isomorphism and 
Fact~\ref{fct:basicsonultra}\ref{fct:basicsonultra-3}, we get that \( (B^x_n)_{n \in \Nat} \) is a \( \subseteq \)-decreasing sequence of clopen sets whose diameters converge to \( 0 \): thus \( \bigcap_{n \in \Nat} B^x_n \) 
contains a unique element \( y_x \). We claim that the map
\[ 
\varphi \colon X \to Y \qquad x \mapsto y_x
 \] 
is an isometry. We first check that \( \varphi \) preserves distances (whence it is also injective). Clearly, for every \( x\in X \) and \( B \in B_X \),
\begin{equation} \label{eq:f(B)}
x \in B \iff \varphi(x) \in f(B).
 \end{equation}
 and
 \begin{equation} \label{eq:diam}
\mathrm{diam}(B) = \mathrm{diam}(f(B)).
 \end{equation}
By Fact~\ref{fct:basicsonultra}\ref{fct:basicsonultra-2} two points \( x,x' \in X \) have distance \( < q \) if and only if they both belong to 
the same open ball \( B \) with radius \( q \): thus \( \varphi(x), \varphi(x') \in f(B) \) by~\eqref{eq:f(B)}, and since \( \mathrm{diam}(f(B)) = \mathrm{diam}(B) \leq q \), this implies that \( d_Y(\varphi(x),\varphi(x')) \leq q \). 
Similarly, reversing the above argument we get that if \( \varphi(x),\varphi(x') \) have distance \( < q \) then they both belong to the same open ball \( B \) with radius \( q \), so that \( x,x' \in f^{-1}(B) \) and 
\( d_X(x,x') \leq q \). This shows that \( d_X(x,x') = d_Y(\varphi(x), \varphi(x')) \).
It remains to show that \( \varphi \) is also surjective. This can be done by reversing the construction of \( \varphi \): given \( y \in Y \) 
check that \( (f^{-1}(B_{\frac{1}{n}}(y)))_{n \in \Nat}\) is a \( \subseteq \)-decreasing sequence of clopen set with vanishing diameter, and that the unique \( x \in \bigcap_{n \in \Nat} f^{-1}(B_{\frac{1}{n}}(y)) \) 
is such that \( \varphi(x) = y \).
\end{proof}

Gao and Kechris  showed that also the classifications given by Theorem~\ref{thm:upperboundultrametric} are optimal.

\begin{theorem}[Gao-Kechris, see~{\cite[Proposition 4.2 and Theorem 4.4]{Gao2003}}] \label{thm:lowerboundsfordiscrete}
\begin{enumerate-(1)}
\item \label{thm:lowerboundsfordiscrete-1}
The isomorphism relation \( \cong_{\mathsf{GRAPHS}} \) Borel reduces to isometry between discrete (hence also countable, zero-dimensional, and locally compact) 
Polish metric spaces.
\item \label{thm:lowerboundsfordiscrete-2}
The relation \( \cong_{\mathsf{GRAPHS}} \) Borel reduces to isometry between Polish ultrametric spaces.
\end{enumerate-(1)}
\end{theorem}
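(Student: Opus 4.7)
I would treat the two parts separately. Part~\ref{thm:lowerboundsfordiscrete-1} admits an essentially direct encoding, while part~\ref{thm:lowerboundsfordiscrete-2} requires more care: in an ultrametric space the relation ``being within $\varepsilon$'' is automatically an equivalence relation (by Fact~\ref{fct:basicsonultra}\ref{fct:basicsonultra-2}), which rules out the naive ``distance $1$ iff edge'' encoding that works for part~\ref{thm:lowerboundsfordiscrete-1}.

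For part~\ref{thm:lowerboundsfordiscrete-1}, given a countable graph $G$ on vertex set $\Nat$, I would map it to the metric space $(\Nat,d_G)$ defined by $d_G(n,n)=0$, $d_G(n,m)=1$ whenever $\{n,m\}$ is an edge of $G$, and $d_G(n,m)=2$ otherwise. All nonzero distances lie in $[1,2]$, so the triangle inequality is automatic; the space is discrete, hence complete, Polish, zero-dimensional, countable, and locally compact simultaneously, and the assignment $G \mapsto (\Nat,d_G)$ is clearly Borel. Any graph isomorphism preserves the edge relation and is therefore an isometry, while conversely any isometry $(\Nat,d_G) \to (\Nat,d_H)$ is a distance-preserving bijection that sends the pairs at distance exactly $1$ to the pairs at distance exactly $1$, hence a graph isomorphism. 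This single construction yields the lower bound for all the listed subclasses.

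For part~\ref{thm:lowerboundsfordiscrete-2}, I would exploit the Borel bi-reducibility $\cong_{\mathsf{GRAPHS}} \sim_B \cong_{\mathsf{TREES}}$ recalled in the excerpt and reduce isomorphism of countable pruned trees to Polish ultrametric isometry. Given such a tree $T \subseteq \pre{<\Nat}{\Nat}$, I would build a countable Polish ultrametric space $U_T$ whose structure of rational-radius open balls---that is, the countable $\L$-structure $A_{U_T}$ from the proof of Theorem~\ref{thm:upperboundultrametric}\ref{thm:upperboundultrametric-2}---canonically reproduces $T$. Concretely, to each node $s \in T$ one associates an open ball of radius $2^{-|s|}$, so that the containment order of these balls mirrors the tree order on $T$, and one attaches a suitable sequence of auxiliary ``tag'' points at each node to ensure that the level of $s$ can be read off from the diameters of rational balls. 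Tree isomorphisms clearly induce isometries, and by running the argument of Theorem~\ref{thm:upperboundultrametric}\ref{thm:upperboundultrametric-2} in reverse, any isometry $U_T \cong^i U_{T'}$ yields an isomorphism $A_{U_T} \cong A_{U_{T'}}$ and thereby an isomorphism of the original trees.

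The main obstacle is the last step, namely preventing ultrametric isometries from ``scrambling'' the tree levels: the naive node metric $d(s,t) = 2^{-|s \wedge t|}$ on $T$ admits spurious isometries permuting nodes of different levels, since it depends only on the length of the common prefix and not on the levels themselves. Overcoming this is the reason for the auxiliary tag points: they must be engineered so that the depth of each node becomes intrinsic to the metric structure (equivalently, so that the map sending $U_T$ to $A_{U_T}$ and then forgetting back to $T$ is a faithful isomorphism invariant). Once this rigidity is secured, everything else is a straightforward bookkeeping check that the resulting map from trees to elements of $\MS_\Sep$ is Borel and reduces $\cong_{\mathsf{TREES}}$ to isometry on Polish ultrametric spaces.
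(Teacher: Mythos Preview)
Your treatment of part~\ref{thm:lowerboundsfordiscrete-1} is correct and standard; indeed the same ``distance $1$ if edge, $2$ otherwise'' encoding is exactly what the paper uses later in Theorem~\ref{thm:graphskappaanddiscrete} for the uncountable analogue. However, the paper does \emph{not} prove Theorem~\ref{thm:lowerboundsfordiscrete} this way: it instead derives both parts simultaneously from the tree-to-ultrametric construction of Section~\ref{sec:treesisometry} (see the paragraph following Corollary~\ref{cor:maincammarmot} and the proof of Theorem~\ref{thm:maincammarmot}\ref{thm:maincammarmot-1}). Taking any ill-founded $D \in \mathcal{D}_\Sep$ bounded away from $0$, the spaces $X_T$ are at once discrete \emph{and} ultrametric, so a single reduction yields both parts.

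For part~\ref{thm:lowerboundsfordiscrete-2} your proposal has a genuine gap, and also a misdiagnosis. The gap is that your ``auxiliary tag points'' are never actually constructed: you state a desideratum (the level of each node should be recoverable from the metric) and assert that tag points can be ``engineered'' to achieve it, but this is exactly the content of the proof, not a bookkeeping detail. More interestingly, you explicitly dismiss the naive metric $d_T(u,v) = r_{\lh(u \sqcap v)}$ as unworkable because it ``admits spurious isometries permuting nodes of different levels'' --- yet this is precisely the metric the paper uses. The paper's Theorem~\ref{thm:red} confronts the spurious isometries head-on: it shows (Claim~\ref{claim:switsching}) that the \emph{only} way an isometry $\varphi$ of $(T,d_T)$ can fail to preserve lengths is via a ``$\varphi$-switching pair'' $(v_0,v_1)$, where $v_1$ is terminal with immediate predecessor $v_0$ and $\varphi$ swaps their levels. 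One then defines $\varphi'$ by un-swapping every such pair; Claim~\ref{claim:isometry} checks that $\varphi'$ is still an isometry, and since $\varphi'$ now preserves lengths it is a genuine tree isomorphism. So rather than rigidifying the encoding to \emph{prevent} spurious isometries, the paper shows they are tame enough to \emph{repair} after the fact. This is the key idea you are missing, and it is what allows the uniform proof of both parts.
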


Although quite elementary, 
Gao-Kechris' proofs of the two parts of Theorem~\ref{thm:lowerboundsfordiscrete} use different (and incompatible) constructions, and in fact the complexity of the isometry relation on discrete Polish ultrametric spaces has 
been a longstanding open problem (see below) until~\cite{Camerlo:2015ar}: we will see that the solution to such problem provides in particular a (uniform) proof of both part~\ref{thm:lowerboundsfordiscrete-1} 
and~\ref{thm:lowerboundsfordiscrete-2}, so we omit the proof  of Theorem~\ref{thm:lowerboundsfordiscrete} here.

Theorem~\ref{thm:lowerboundsfordiscrete}\ref{thm:lowerboundsfordiscrete-1} also gives a lower bound for the complexity of the isometry relation on zero-dimensional Polish metric spaces: however, this lower bound is 
 not sharp.%
 \footnote{The problem of finding an optimal classification for zero-dimensional Polish metric spaces is still widely open --- see the Main Open Problem~\ref{problem:zerodim} and the comment after it.}
 Indeed, the map \( z \mapsto X_z \) from the proof of Theorem~\ref{thm:lowerboundallPolish} reducing \( E^Y_G \) to \( \cong^i \) has the interesting property that the topology of each \( X_z \) is closely related to 
 that of \( G \). 
 For example, if \( G \) is locally compact, then it admits a complete left-invariant metric (see e.g.~\cite[Lemma 6.5]{Gao2003}): it follows that \( \widehat{G} = G \), so that each of the spaces \( X_z \) is locally compact too.
Similarly, if \( G \) is 
 zero-dimensional and has a compatible \emph{complete} left-invariant metric (so that \( \widehat{G} = G \) again), then each \( X_z \) is zero-dimensional as well.
 This allows us to prove the 
 following.

\begin{theorem}[Clemens, see~{\cite[Corollary 19]{clemensisometry}}] \label{thm:lowerboundzerodim}
The isometry relation on zero-dimensional Polish metric spaces is not classifiable by countable structures.
\end{theorem}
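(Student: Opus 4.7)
The strategy is to apply the construction from the proof of Theorem~\ref{thm:lowerboundallPolish} to a Polish group \( G \) and Polish \( G \)-space \( Y \) chosen so that (i) \( G \) is zero-dimensional and admits a compatible complete left-invariant metric \( d_G \), and (ii) \( E_G^Y \) is not classifiable by countable structures. Granting such a choice, the map \( z \mapsto X_z \) from that proof is already a Borel reduction \( E_G^Y \leq_B {\cong^i} \), and the remaining task is to check that each \( X_z \) lies in the subclass of zero-dimensional Polish metric spaces.

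This verification is routine given the explicit shape of \( X_z \): its underlying set is \( \widehat{G} \cup \{ x^*_n \mid n \in \Nat \} \), where by (i) the \( d_G \)-completion coincides with \( G \) itself and so is a zero-dimensional Polish space, while each additional point \( x^*_n \) has \( d_z \)-distance at least \( n+2 \) from every other point (in particular from every point of \( G \) and from every \( x^*_m \) with \( m \neq n \)) and is therefore isolated in \( X_z \). A space obtained from a zero-dimensional one by adjoining countably many isolated points is again zero-dimensional, so indeed \( X_z \) is zero-dimensional. Since the original reduction is Borel, we obtain a Borel reduction of \( E_G^Y \) into the restriction of \( \cong^i \) to zero-dimensional Polish metric spaces, and the non-classifiability of \( E_G^Y \) transfers to the target.

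The heart of the matter is securing (i) and (ii) simultaneously. By the Becker--Kechris theorem, every continuous action of a non-archimedean Polish group (equivalently, of a closed subgroup of \( S_\infty \)) has an orbit equivalence relation that is classifiable by countable structures; so the chosen \( G \) must fail to be non-archimedean, i.e.\ must admit a compatible complete left-invariant metric without admitting an invariant basis of open subgroups at the identity. Exhibiting such a zero-dimensional Polish group, together with a continuous action whose orbit equivalence relation is known (e.g.\ via Hjorth's theory of turbulence) not to be classifiable by countable structures, is the main technical point of the argument, and it can be arranged as in~\cite{clemensisometry}. Once produced, the paragraph above immediately delivers the theorem.
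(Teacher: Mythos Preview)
Your proposal is correct and follows essentially the same approach as the paper's proof: both reduce a turbulent orbit equivalence relation of a zero-dimensional Polish group admitting a complete left-invariant metric to isometry on zero-dimensional Polish metric spaces via the construction from Theorem~\ref{thm:lowerboundallPolish}. The only difference is that the paper names the specific witness explicitly---the density ideal \( I_d \) under symmetric difference, acting turbulently on \( \pre{\Nat}{2} \)---whereas you defer to~\cite{clemensisometry} for this example.
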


\begin{proof}
Consider the \emph{density ideal} 
\[ 
I_d = \left \{ x \subseteq \NN \mathrel{\Big |} \lim_{n \to \infty} \frac{|\{ j \in x \mid j < n \}|}{n} = 0 \right \} 
\]
equipped with the group operation \( \triangle \) of symmetric difference. It is not difficult to show that \( (I_d, \triangle) \) can be given a complete left-invariant metric which turns it into a zero-dimensional Polish group
(see~\cite[Section 5]{clemensisometry}). 
Thus by (the proof of) Theorem~\ref{thm:lowerboundallPolish} and the observation preceding Theorem~\ref{thm:lowerboundzerodim}, for any continuous action of \( I_d \) on a Polish space \( Y \) we have that \( E^Y_{I_d} \) is Borel 
reducible to the isometry relation on zero-dimensional Polish metric spaces. As the action of \( I_d \) on the Cantor space \( \pre{\Nat}{2} \) via symmetric difference is turbulent (hence not classifiable by countable 
structures), the result follows. 
\end{proof}

\begin{remark}
It can be shown that locally compact groups do not admit turbulent actions: in fact, each orbit equivalence relation \( E^X_G \) induced by an action of a locally compact Polish group is essentially countable, and thus, 
in particular, classifiable by countable structures. Thus Clemens' technique cannot be used to obtain a non-classifiability result for locally compact Polish metric spaces analogous to Theorem~\ref{thm:lowerboundzerodim}.
\end{remark}

We have seen that isometry on compact metric spaces and isometry on arbitrary Polish metric spaces lie at the extremes of a wide range of possibilities in the structure of analytic equivalence relations under Borel reducibility, 
the former being Borel bi-reducible with \( \id(\RR) \) and the latter being bi-reducible with \( E^\infty_{ H([0;1]^\mathbb{N})} \), the most complicated orbit equivalence relation
(see Figure~\ref{fig:sumup}). It is thus natural to ask where it is located in such hierarchy the 
restriction of \( \cong^i \) to a natural mild enlargement of the class of compact metric spaces, namely \emph{locally compact} 
Polish metric spaces. Theorem~\ref{thm:lowerboundsfordiscrete}\ref{thm:lowerboundsfordiscrete-1} already 
shows that \( \cong_{\mathsf{GRAPHS}} \) is a lower bound for it, but to the best of our knowledge it is currently not known if such a lower bound is sharp --- see the Main Open Problem~\ref{problem:loccompact} and the discussion
after it. In the attempt of better understanding this classification problem, Gao and Kechris 
considered in~\cite{Gao2003} various subclasses  of locally compact Polish metric spaces and determined the complexity of the isometry relation on them. We summarize those results (without proofs) in the next theorem.%
\footnote{As acknowledged in~\cite{Gao2003}, the proof of Theorem~\ref{thm:subclassesofloccompact}\ref{thm:subclassesofloccompact-hjorth} is actually due to G.\ Hjorth.}

\begin{theorem}[Gao-Kechris] \label{thm:subclassesofloccompact}
\begin{enumerate-(1)}
\item 
The isometry relation on \emph{zero-dimensional} locally
compact Polish metric spaces is Borel bi-reducible with graph isomorphism~\cite[Theorem 4.3]{Gao2003}.
\item \label{thm:subclassesofloccompact-hjorth}
The isometry relation on \emph{connected} locally
compact Polish metric spaces is Borel bi-reducible with \( E_\infty \), the most complex (essentially) countable Borel equivalence relation. 
The same is true for \emph{Heine-Borel} or even for \emph{pseudo-connected} 
locally compact Polish metric spaces~\cite[Theorem 7.1]{Gao2003}. (Recall that pseudo-connected%
\footnote{See~\cite{Gao2003} for the definitions of pseudo-connected Polish metric spaces and pseudo-connected
components of locally compact Polish metric spaces.}
locally compact Polish metric spaces include both the connected and the Heine-Borel ones.)
\item \label{thm:subclassesofloccompact-hom}
The isometry relation on \emph{homogeneous pseudo-connected}
locally compact Polish metric spaces is smooth~\cite[Corollary 5.8]{Gao2003}.
(Recall that a metric space is homogenous if its isometry group acts transitively
on it.)
\end{enumerate-(1)}
\end{theorem}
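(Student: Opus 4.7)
The plan is to treat the three parts separately, relying on different specializations of the theory developed so far.

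For part (1), the lower bound is immediate from Theorem~\ref{thm:lowerboundsfordiscrete}\ref{thm:lowerboundsfordiscrete-1}, since every discrete Polish metric space is already zero-dimensional and locally compact. For the upper bound, I would capture the isometry type of a zero-dimensional locally compact Polish metric space \(X\) by a countable first-order structure. The key observation is that, combining zero-dimensionality with local compactness, every point of \(X\) admits arbitrarily small compact clopen neighborhoods, so that \(X\) decomposes canonically into a countable disjoint union of compact clopen ``pieces'' whose diameters can be made to vanish at small scales. Each such piece is compact, and Theorem~\ref{thm:gromov} classifies its isometry type by a real; the global isometry type is then recovered from the Gromov invariants of the pieces together with the countably many pairwise distances between them. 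A Borel bookkeeping using the selectors \(\psi_n\) to locate the pieces uniformly in \(X\) yields a Borel reduction to \(\cong_{\mathsf{GRAPHS}}\).

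For part (2), I would first prove the upper bound by exploiting a rigidity property: every isometry of a pseudo-connected locally compact Polish metric space \(X\) is determined by its action on a single suitably chosen compact ``seed'' (for instance, a closed ball of canonical radius around a Borel-selected base point). Since the isometry group of a compact metric space is a compact Polish group, and the extension of a seed isometry to a global one involves at most countably many discrete choices, the isometry relation on these spaces should turn out to be essentially countable and hence reducible to \(E_\infty\). For the lower bound, I would explicitly construct, for each \(x \in \pow(\mathbb{F}_2)\), a Heine-Borel (hence pseudo-connected) Polish metric space \(X_x\) whose isometry type encodes the shift orbit of \(x\), by thickening a canonical weighted Cayley graph of \(\mathbb{F}_2\) labelled by \(x\) into a Heine-Borel space via rational-length arcs chosen so that distinct orbits give non-isometric spaces.

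For part (3), the plan is to use homogeneity to reduce the global classification to a single local invariant. Indeed, if \(X\) is homogeneous then its isometry type is fully determined by the isometry types of the closed balls around any fixed base point, which under the pseudo-connected locally compact assumption are all compact. Applying Theorem~\ref{thm:gromov} to each such ball, and combining the resulting reals into a single invariant living in a countable product of standard Borel spaces, produces a smooth classification.

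The main obstacle common to all three parts is the Borel uniformity of the constructions: one must extract the relevant ``canonical skeleton'' --- a decomposition into compact clopen pieces in part (1), a canonical seed ball or sequence of balls in parts (2) and (3) --- in a Borel fashion from an arbitrary code \(X \in F(\mathbb{U})\), using only the Kuratowski--Ryll-Nardzewski selectors \(\psi_n\). Once this Borel uniformity is secured, the translation into countable structures, into a countable Borel equivalence relation, or into a real invariant becomes largely routine. The lower bound in part (2) is a separate technical challenge, requiring a delicate weighted construction to ensure simultaneously the Heine-Borel (or connected) character of \(X_x\) and the faithful encoding of the orbit information.
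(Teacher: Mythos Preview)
The paper does not prove this theorem at all: immediately before the statement it says ``We summarize those results (without proofs) in the next theorem,'' and each part simply carries a citation to the relevant result in~\cite{Gao2003}. So there is nothing to compare your attempt against.

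As for your sketch on its own merits: what you have written is a plan, not a proof, and in places the plan is too vague to assess. Part~(1) is broadly on the right track (the decomposition into compact clopen pieces, combined with Gromov's invariants, is indeed how Gao--Kechris argue). Part~(3) is also the right idea. Part~(2) is where your outline is weakest: the assertion that ``the extension of a seed isometry to a global one involves at most countably many discrete choices'' is doing all the work in the upper bound, and you have not said why this should hold, nor why it yields an \emph{essentially countable} equivalence relation rather than merely something classifiable by countable structures. The actual argument (due to Hjorth, as the paper notes in its footnote) is more delicate than your sketch suggests. Similarly, your lower-bound construction for part~(2) --- thickening a labelled Cayley graph into a Heine-Borel space --- is a plausible strategy, but the details of making the resulting space connected (or Heine-Borel) while faithfully encoding the shift orbit are nontrivial and you have not supplied them.

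If you intend to actually write out proofs here, you should consult~\cite{Gao2003} directly; the paper under review treats this theorem as a black box.
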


The classification result given by Theorem~\ref{thm:subclassesofloccompact}\ref{thm:subclassesofloccompact-hom} has been improved by Clemens in~\cite{clemenshomogeneous}  by showing that the complexity of isometry on all homogeneous locally compact Polish metric spaces  becomes quite higher if we remove the additional condition of pseudo-connectedness. 

\begin{theorem}[Clemens, see~{\cite[Corollaries 6.2 and 6.3]{clemenshomogeneous}}] \label{thm:clemenshomogeneous}
Isometry between homogeneous \emph{discrete} Polish metric spaces is Borel bi-reducible
with \( \cong_{\mathsf{GRAPHS}} \). Thus \( \cong_{\mathsf{GRAPHS}} \) is Borel reducible to isometry between homogeneous
\emph{locally compact} Polish metric spaces.
\end{theorem}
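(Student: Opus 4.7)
The second assertion follows immediately from the first: every discrete Polish metric space is locally compact, as each singleton is open and trivially compact, so any Borel reduction of $\cong_{\mathsf{GRAPHS}}$ to isometry on homogeneous discrete spaces automatically yields one into the larger class of homogeneous locally compact spaces. It thus suffices to prove the bi-reducibility claim.

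The upper bound is essentially automatic: homogeneous discrete Polish metric spaces form a subclass of countable Polish metric spaces, and by Theorem~\ref{thm:upperboundultrametric}(1) isometry on the latter is classifiable by countable structures, hence Borel reducible to $\cong_{\mathsf{GRAPHS}}$ (which is $\leq_B$-maximal among such relations, by the discussion following Remark~\ref{rmk:models}); restricting this reduction to the homogeneous discrete case gives the required bound.

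For the lower bound, the plan is to construct a Borel map $G \mapsto X_G$ from countable graphs on $\Nat$ to homogeneous discrete Polish metric spaces such that $G \cong G' \iff X_G \cong^i X_{G'}$. The idea is to realize $X_G$ as the underlying set of a countable group $\Gamma_G$ equipped with a left-invariant metric $d_G$ bounded below away from zero off the identity: homogeneity is then automatic, since left translations act transitively by isometries, and discreteness is built in. Concretely, I would take $\Gamma_G$ to be a Cayley-type group canonically associated with $G$ (for instance the right-angled Coxeter group with generators $s_v$ indexed by the vertices of $G$, relations $s_v^2 = e$, and $s_v s_w = s_w s_v$ precisely when $v \mathrel{E} w$), and let $d_G$ be the associated weighted word metric, assigning each generator $s_v$ a positive weight $w_v(G)$ chosen in a Borel manner so that the weights $\{w_v(G) : v \in \Nat\}$ are pairwise distinct and $\QQ$-linearly independent.

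The main obstacle is the backward implication $X_G \cong^i X_{G'} \Rightarrow G \cong G'$: a priori, an isometry between the two Cayley-type metric spaces need not preserve any group structure, let alone send distinguished generators to distinguished generators. The plan to overcome this is to exploit the pairwise distinct, generic weights to achieve canonical rigidity. From the metric alone, one can intrinsically identify at each point the set of generator-type neighbors (those at one of the prescribed weights $w_v$), read off which generator each corresponds to (by recording the exact weight), and then recover the adjacency relation of $G$ from the presence or absence of $4$-point configurations $\{x, x_1, x_{12}, x_2\}$ with alternating side lengths $w_v, w_w, w_v, w_w$, which exist precisely when $s_v$ and $s_w$ commute (that is, when $v \mathrel{E} w$ in $G$). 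Since this decoding is Borel and equivariant under isometries, it produces a graph isomorphism $G \cong G'$ from any isometry $X_G \cong^i X_{G'}$, completing the proof.
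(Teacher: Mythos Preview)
The paper does not supply its own proof of this result (it merely cites Clemens), so there is no argument to compare against; I will assess your proposal on its own merits.

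Your upper bound and the passage from discrete to locally compact are fine. The gap is in the lower bound: insisting that the weights $w_v$ be \emph{pairwise distinct} (and $\QQ$-independent) destroys the forward implication $G \cong G' \Rightarrow X_G \cong^i X_{G'}$. Indeed, take $G$ with edge set $\{\{0,1\}\}$ and $G'$ with edge set $\{\{0,2\}\}$; these are isomorphic graphs. Any isometry $X_G \to X_{G'}$ may be normalized by a left translation to fix $e$, and then, precisely because of the $\QQ$-independence you impose, the sphere of radius $w_v$ about $e$ is the singleton $\{s_v\}$ in both spaces, so the isometry must send each $s_v$ to $s_v$. But such a map cannot carry the commuting pair $(s_0,s_1)$ in $\Gamma_G$ to the commuting pair $(s_0,s_2)$ in $\Gamma_{G'}$, and no isometry exists. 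In short, your construction reduces \emph{equality} of labelled graphs to isometry, not graph isomorphism.

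The repair is to drop the distinct weights and use the ordinary word metric (all $w_v = 1$). The forward direction is then immediate, since a graph isomorphism $\sigma$ induces a group isomorphism $s_v \mapsto s_{\sigma(v)}$ which is an isometry for the unweighted word metric. For the backward direction your $4$-cycle idea still goes through without needing to pin down individual vertices: after normalizing an isometry to fix $e$, it permutes the unit sphere $\{s_v : v \in \Nat\}$, and by the solution to the word problem in right-angled Coxeter groups, there is a point at distance $1$ from both $s_v,s_w$ and distance $2$ from $e$ if and only if $s_v s_w = s_w s_v$, i.e.\ if and only if $v$ and $w$ are adjacent in $G$. Thus the induced permutation of the generators is already a graph isomorphism. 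The rigidity you were trying to buy with generic weights is unnecessary, because you only need to recover $G$ up to isomorphism, not to identify its vertices.
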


It is not known whether the above lower bound for homogeneous locally compact Polish metric spaces is sharp. 
In contrast, in~\cite{clemenshomogeneous} it is further shown that the complexity of isometry drops down again if 
we consider the smaller class of \emph{ultrahomogeneous} locally compact Polish metric spaces. (Recall that a metric space is ultrahomogeneous if every partial isometry between finite subsets of it can be extended 
to an isometry of the whole space.) Let \( E_{ctble} \) be the equivalence relation on \( \pre{\Nat}{\RR} \) defined by
\[ 
(x_i)_{i \in \Nat} \mathrel{E_{ctble}} (y_i)_{i \in \Nat} \iff \{ x_i \mid i \in \Nat \} = \{ y_i \mid i \in \Nat \},
 \] 
that is two sequences of reals \( (x_i)_{i \in \Nat} \) and \( (y_i)_{i \in \Nat} \) are \( E_{ctble} \)-equivalent if and only if they enumerate 
(possibly with repetitions) the same countable sets of reals. It can be shown that
\[ 
E_\infty <_B E_{ctble} <_B \cong_{\mathsf{GRAPHS}}.
 \]

\begin{theorem}[Clemens, see~{\cite[Theorems 7.2 and 7.6]{clemenshomogeneous}}]
Isometry between ultrahomogeneous locally compact Polish metric spaces is Borel bi-reducible
with \( E_{ctble} \). In fact, the same is true already for ultrahomogeneous discrete Polish metric spaces.
\end{theorem}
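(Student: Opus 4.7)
The plan is to prove the two reducibilities separately. The lower bound will be established already on the narrower class of ultrahomogeneous \emph{discrete} Polish metric spaces, while the upper bound must be proved on the wider class of ultrahomogeneous \emph{locally compact} Polish metric spaces.

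For the lower bound $E_{ctble} \leq_B {\cong^i}$, I would use a Fra\"iss\'e-limit construction. Fix once and for all a Borel injection $\iota \colon \RR \to (1,2)$ and, given $a = (a_n)_{n \in \Nat} \in \RR^\Nat$, set
\[
S_a = \{1\} \cup \{\iota(a_n) \mid n \in \Nat\} \subseteq [1,2],
\]
so that $S_a$ depends only on the \emph{set} $\{a_n \mid n \in \Nat\}$. Because $S_a \subseteq [1,2]$, any symmetric zero-diagonal matrix with off-diagonal entries in $S_a$ automatically satisfies the triangle inequality (since $d(x,z) \leq 2 \leq 1+1 \leq d(x,y) + d(y,z)$), so the class $\mathcal{K}_a$ of finite metric spaces with distances in $S_a$ has the hereditary, joint embedding, and amalgamation properties trivially, and is therefore a Fra\"iss\'e class. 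Let $X_a$ be its Fra\"iss\'e limit: a countable ultrahomogeneous metric space whose set of realised non-zero distances is exactly $S_a$. Since all non-zero distances are $\geq 1$, the metric is discrete and complete, so $X_a$ is a discrete ultrahomogeneous Polish metric space. A standard bookkeeping argument (of the kind used for Borel uniformisations of Fra\"iss\'e constructions) makes $a \mapsto X_a$ Borel. This is a reduction: if $X_a \cong^i X_{a'}$ then their distance sets coincide, forcing $S_a = S_{a'}$ and hence $\{a_n\} = \{a'_n\}$ by injectivity of $\iota$; conversely, $\{a_n\} = \{a'_n\}$ gives $\mathcal{K}_a = \mathcal{K}_{a'}$ and hence isometric Fra\"iss\'e limits.

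For the upper bound ${\cong^i} \leq_B E_{ctble}$ on ultrahomogeneous locally compact Polish metric spaces, the strategy is to extract in a Borel way a countable complete isometry invariant. By ultrahomogeneity, $X$ is determined up to isometry by information of a Fra\"iss\'e/Kat\v{e}tov-type, namely a countable ``skeleton'' of its age; using the Borel selectors $\psi_n$ to choose a canonical dense sequence, one can form the set $T(X)$ of all distance matrices realised on finite sub-tuples of this sequence, encoded canonically as a countable set of reals. A back-and-forth argument, made possible by ultrahomogeneity and the local compactness of $X$ (which ensures that bounded sequences of finite configurations have convergent sub-configurations, so the age is controlled by its countable sub-skeleton), shows that $X$ is recovered up to isometry from $T(X)$. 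To obtain an honest reduction to $E_{ctble}$ one further canonicalises $T(X)$ so that it depends only on the isometry type of $X$ and not on the particular enumeration --- e.g., by closing $T(X)$ under rational approximation in the Polish space of (codes for) finite metric spaces --- and then $X \cong^i Y$ iff $T(X) = T(Y)$ as subsets of $\RR$.

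The main obstacle is the upper bound in the genuinely locally compact case: when $X = \RR$ (or $\RR^n$) the age is uncountable, so a naive ``list the age'' approach fails, and one must verify that the canonical countable encoding depends only on the isometry type of $X$ and that the whole assignment is Borel-uniform; moreover local compactness must be used in an essential way to guarantee that finitely many real parameters per configuration suffice and that limit configurations stay in the age. The discrete case needed for the ``in fact'' clause is much cleaner: when $X$ is a discrete ultrahomogeneous Polish metric space, $X$ is itself countable, its age is literally a countable set of finite metric spaces, and that set can be directly encoded as an element of $\RR^\Nat$ --- yielding the upper bound on the discrete side with no closure or density argument at all.
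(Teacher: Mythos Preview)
The paper does not contain a proof of this theorem: it is stated with attribution to Clemens and a citation to \cite[Theorems 7.2 and 7.6]{clemenshomogeneous}, but no argument is supplied. There is therefore nothing in the present paper to compare your proposal against.

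On the merits of the proposal itself: your lower bound is essentially correct and is in the spirit of the standard construction --- Fra\"iss\'e limits of finite metric spaces with distances confined to $[1,2]$ are discrete, countable, ultrahomogeneous, and have distance set exactly the prescribed $S_a$, so $a \mapsto X_a$ reduces $E_{ctble}$ to isometry on ultrahomogeneous discrete Polish metric spaces. Your upper bound for the \emph{discrete} case is also fine: a countable ultrahomogeneous metric space is determined up to isometry by its age, which is a countable set of finite metric spaces and can be encoded as a countable set of reals.

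The genuine gap is the upper bound in the locally compact (non-discrete) case, and you correctly flag it. Your suggested fix --- take the distance matrices on finite tuples from a dense sequence and close under rational approximation --- does not work as stated: the resulting set of matrices need not be an isometry invariant (it depends on the chosen dense sequence), and even after closure it is not clear that two non-isometric spaces cannot yield the same closure. More fundamentally, the age of a space like $\RR^n$ is the class of \emph{all} finite metric spaces embeddable in $\RR^n$, and knowing a countable dense subfamily of this age does not by itself recover $n$ or the isometry type. Clemens' argument proceeds instead through a structural analysis of ultrahomogeneous locally compact Polish metric spaces (exploiting that ultrahomogeneity plus local compactness severely constrains the space), extracting finitely or countably many numerical invariants directly rather than attempting to encode the age. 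Your sketch does not supply such an analysis, so the locally compact half of the upper bound remains unproved.
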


It is maybe worth mentioning the following related result on (ultra)homogeneous Polish \emph{ultrametric} spaces.

\begin{theorem}[Gao-Kechris, see~{\cite[Theorem 4.5]{Gao2003}}]
The isometry relation on ultrahomogeneous discrete
Polish ultrametric spaces is Borel
bi-reducible with \( E_{ctbie} \). The same is true for isometry between homogeneous Polish ultrametric spaces.
\end{theorem}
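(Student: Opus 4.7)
The plan is to establish both directions of the Borel bi-reducibility. Since ultrahomogeneous discrete Polish ultrametric spaces form a subclass of homogeneous Polish ultrametric spaces, it suffices to prove the lower bound $E_{ctble} \leq_B {\cong^i}$ on the smaller (discrete ultrahomogeneous) class and the upper bound ${\cong^i} \leq_B E_{ctble}$ on the larger (homogeneous) class.

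\textbf{Lower bound.} Fix a Borel injection $\iota \colon \RR \to [1,2]$. Given $\vec{x} = (x_i)_{i \in \Nat}$, let $A_{\vec{x}} = \{ \iota(x_i) : i \in \Nat \} \subseteq [1,2]$, and in a Borel fashion enumerate $A_{\vec{x}}$ as $(a_k)_{k \in \Nat}$ with each element repeated infinitely often. Let $X_{\vec{x}}$ consist of eventually-zero sequences $(n_k)_{k \in \Nat}$ in $\pre{\Nat}{\Nat}$, equipped with
\[
d((n_k)_{k \in \Nat}, (m_k)_{k \in \Nat}) = \max\{a_k : n_k \neq m_k\}
\]
(a max over a finite set). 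The ultrametric inequality holds because any coordinate at which $(n_k)$ and $(m_k)$ differ must also be a coordinate at which $(n_k)$ differs from any intermediate $(l_k)$, or $(l_k)$ differs from $(m_k)$. Since all nonzero distances are $\geq 1$, $X_{\vec{x}}$ is a discrete Polish ultrametric space whose realized distance set is exactly $A_{\vec{x}}$. The infinite multiplicity of each label yields, via a back-and-forth, both that $X_{\vec{x}}$ is ultrahomogeneous and that $A_{\vec{x}} = A_{\vec{y}}$ implies $X_{\vec{x}} \cong^i X_{\vec{y}}$. Conversely, if $X_{\vec{x}} \cong^i X_{\vec{y}}$ then the distance sets coincide, hence $A_{\vec{x}} = A_{\vec{y}}$ and, by injectivity of $\iota$, $\{x_i : i \in \Nat\} = \{y_i : i \in \Nat\}$. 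The resulting Borel map $\vec{x} \mapsto X_{\vec{x}}$ (into $\MS_\Sep$ via Coding~\ref{coding2}) is the desired reduction.

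\textbf{Upper bound.} For a homogeneous Polish ultrametric space $X$, we extract a countable invariant $I(X)$ encoding its tree of balls at rational radii. By Fact~\ref{fct:basicsonultra}\ref{fct:basicsonultra-3} there are only countably many open balls of each rational radius, and using the Borel selectors $\psi_n$ they can be enumerated in a Borel way. Homogeneity yields, for each $r \in \QQ_{>0}$, a common cardinality $\beta_X(r) \in \{1,2,\dots,\aleph_0,2^{\aleph_0}\}$ of balls of radius $r$; and for each $r_1 < r_2$ in $\QQ_{>0}$ a common number $\nu_X(r_1,r_2) \in \{1,2,\dots,\aleph_0\}$ of sub-balls of radius $r_1$ inside a ball of radius $r_2$. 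Encode the triple $(D_X,\beta_X,\nu_X)$ as a countable set $I(X) \subseteq \RR$ via a fixed Borel coding; any Borel enumeration of $I(X)$ then reduces $\cong^i$ to $E_{ctble}$. A back-and-forth argument shows that $I(X) = I(Y)$ entails $X \cong^i Y$: at each stage one extends the current finite partial isometry by selecting a next point with the prescribed distances to all previously matched points, the existence of which follows from the matching branching numbers in $I$ together with the fact that in an ultrametric space the distance between two points lying in distinct balls of radius $r$ is forced to equal the radius of their smallest common super-ball (Fact~\ref{fct:basicsonultra}\ref{fct:basicsonultra-1}).

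\textbf{Main obstacle.} The principal difficulty lies in the back-and-forth for the upper bound in the merely homogeneous (rather than ultrahomogeneous) setting, where partial isometries between finite subsets need not extend directly. One must proceed inductively along the tree of rational-radius balls: first match the largest relevant balls, then refine to sub-balls using homogeneity at each level together with the ultrametric rigidity just mentioned. A subsidiary complication is that for uncountable homogeneous spaces the invariant records cardinalities up to $2^{\aleph_0}$; this, however, amounts only to countably many pieces of data and so still admits a Borel encoding into $\RR$.
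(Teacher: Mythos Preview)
The paper does not actually supply a proof of this theorem; it is stated with a bare citation to Gao--Kechris \cite[Theorem 4.5]{Gao2003}. So there is no argument in the paper to compare your proposal against, and I can only assess the proposal on its own merits.

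Your lower bound is essentially correct and standard: building a discrete ultrahomogeneous ultrametric space with prescribed countable distance set $A_{\vec{x}} \subseteq [1,2]$ (via eventually-zero integer sequences with infinitely many coordinates carrying each label) is a well-known construction, and the distance set is then a complete isometry invariant on that class. The Borelness of the map into $\mathcal{M}$ is routine.

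The upper bound, however, is not a proof as written. Two concrete gaps:
\begin{enumerate-(1)}
\item The symbol $D_X$ appears in your invariant triple $(D_X,\beta_X,\nu_X)$ but is never defined. Presumably you intend $D_X = R(X)$, but since $\nu_X$ is only recorded at \emph{rational} radii you would need to argue that the realized distance set (which may consist of irrationals) is recoverable from the rational branching data; this is true but requires a short argument.
\item More seriously, you explicitly flag the back-and-forth for the merely homogeneous case as ``the principal difficulty'' and then do not resolve it. The one-sentence plan (``proceed inductively along the tree of rational-radius balls'') is not a proof: one must show that, given a finite partial isometry and a new point $x$, the matching branching numbers guarantee the existence of an appropriate target $y$. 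In an ultrametric space this reduces to showing that a certain nested chain of balls in $Y$ with prescribed relative positions is nonempty, and it is exactly here that homogeneity (all balls of a given radius are setwise isometric, hence have identical sub-ball counts at every smaller radius) does the work. Until that step is written out, the completeness of your invariant remains an assertion.
\end{enumerate-(1)}
Incidentally, $\beta_X$ is redundant: once $R(X)$ and the branching function are known, the cardinality of each ball is determined (it is $1$, finite, $\aleph_0$, or $2^{\aleph_0}$ according to whether $R(X)$ below the ball's radius is empty, finite with finite branching, infinite or with some infinite branching, or has $0$ as a limit point). You could streamline by dropping it.
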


Gao and Kechris devoted the entire Chapter 8 of their monograph~\cite{Gao2003} to another subclass of locally compact Polish metric spaces, namely the ultrametric ones. They were able to show that each such space \( X = (X,d) \) 
can be decomposed  in a particularly nice way using pseudo-connected components. More precisely, one can form a derived space \( D(X) \) by choosing a point from each of the countably many pseudo-components of \( X \): 
it turns out that \( D(X) \)
is a discrete, closed subspace of \( X \) which
contains the complete information about the metric relations among pseudo-components
of \( X \) (see~\cite[Proposition 8.1(ii)]{Gao2003}. On the other hand, by~\cite[Proposition 8.1(i)]{Gao2003}   there are only three possibilities for an arbitrary pseudo-component \( C \) of \( X \):
\begin{enumerate-(a)}
\item
\( C \) is compact; this happens when its diameter is attained by a pair of points.
\item
\( C \) is Heine-Borel; this is the case where 
\( X = C \) is pseudo-connected.
\item
Otherwise: it must then happen that  \( \mathrm{diam}(C) \) is finite and it is
not attained by any pair of points of \( C \). In this case one can define \( d' \) on \( C \) by
\[
d' = \frac{d}{\mathrm{diam}(C) - d}.
\]
Then \( ( C , d' ) \) is ultrametric and Heine-Borel, and it contains all the information about the isometry type of \( (C,d) \).
\end{enumerate-(a)}
These observations essentially reduce the problem of understanding the complexity of the isometry relation on the class of all locally compact Polish ultrametric spaces to the analogous problem for two%
\footnote{Recall that we already know that \emph{compact} Polish (ultra)metric spaces are concretely classifiable by Theorem~\ref{thm:gromov}.}
particular subclasses  of it: Heine-Borel and discrete.
 The first one is already dealt with in~\cite[Section 8.B]{Gao2003} (compare the following result with Theorem~\ref{thm:subclassesofloccompact}\ref{thm:subclassesofloccompact-hjorth} concerning arbitrary Heine-Borel Polish metric spaces):

\begin{theorem}[Gao-Kechris, see~{\cite[Theorem 8.2]{Gao2003}}] \label{thm:heineborelultra}
The isometry relation on Heine-Borel Polish ultrametric spaces is Borel bi-reducible
with \( E_0 \).
\end{theorem}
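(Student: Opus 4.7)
My plan for proving the bi-reducibility is to exploit the ``compact ball tower'' of each Heine-Borel Polish ultrametric space, combining Fact~\ref{fct:basicsonultra} with the Heine-Borel compactness of bounded balls.

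\textbf{Upper bound $\cong^i \leq_B E_0$.} Given $X$, fix a basepoint $x_0 = \psi_0(X)$ via the Borel selectors. For each $n \in \Nat$ the closed ball $\bar B_n(x_0)$ is compact ultrametric, so Theorem~\ref{thm:gromov} assigns it a real isometry invariant $r_n(X)$, and $X \mapsto (r_n(X))_{n \in \Nat}$ is Borel. By Fact~\ref{fct:basicsonultra}, passing from $x_0$ to a different basepoint $x_0'$ leaves $\bar B_n(x_0) = \bar B_n(x_0')$ unchanged whenever $n > d(x_0, x_0')$, so the two sequences differ only in finitely many initial entries. Conversely, if two such sequences agree from some index $N$ onward, then I would produce a global isometry by an inverse-limit argument: the set of isometries between $\bar B_n^X(x_0)$ and $\bar B_n^Y(y_0)$ is compact and nonempty for $n \geq N$, the restriction maps form a cofiltered system, and Tychonoff yields a coherent family of level-$n$ isometries whose direct limit is an isometry $X \to Y$. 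The subtle remaining task is to refine this invariant so that its natural equivalence is $E_0$ rather than $E_1$ (eventual equality of real sequences, which is strictly stronger): here I would argue that, in the ultrametric setting, $\bar B_{n+1}(x_0)$ together with the finitely many $\mathrm{Iso}(\bar B_{n+1}(x_0))$-orbits on its radius-$n$ sub-balls determines $\bar B_n(x_0)$ up to finite data, so the continuous part of the tower collapses to a single real and the rest lives in a countable discrete sequence, yielding $\cong^i \leq_B E_0$.

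\textbf{Lower bound $E_0 \leq_B \cong^i$.} I would construct spaces $X_\alpha$ for $\alpha \in 2^{\Nat}$ with $X_\alpha \cong^i X_\beta$ iff $\alpha \triangle \beta$ is finite. Begin with a countable ``backbone'' $\{p_n : n \in \Nat\}$ carrying ultrametric distances $d(p_n, p_m) = 2^{\max(n,m)+1}$, which automatically delivers the Heine-Borel property because each bounded closed ball contains only the finitely many $p_n$ with $2^{n+1}$ below the bound. Attach at each $p_n$ a small local decoration whose isometry type encodes $\alpha(n)$ (for instance, an extra point at distance $1$ iff $\alpha(n)=1$). To make the equivalence on $\alpha$ come out as $E_0$ rather than equality, introduce enough symmetry across scales --- e.g.\ several indistinguishable backbone candidates at each scale, or a grouped arrangement allowing finite shufflings --- so that an isometry $X_\alpha \to X_\beta$ may redistribute finitely many decoration labels but must preserve the asymptotic decoration pattern.

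\textbf{Main obstacle.} The hardest step is the upper-bound refinement from $E_1$ down to $E_0$: the naive sequence-of-reals invariant only gives $\cong^i \leq_B E_1$, and sharpening this requires a genuine exploitation of the ultrametric Heine-Borel combinatorics, showing that the continuous variation in the tower collapses to a single real slot while all further invariants are discrete. On the lower-bound side the delicate point is calibrating enough symmetry into the construction of $X_\alpha$ to absorb finite perturbations of $\alpha$ without also absorbing infinite ones.
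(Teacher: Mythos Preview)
The paper does not give its own proof of Theorem~\ref{thm:heineborelultra}; it is merely stated with a reference to~\cite[Theorem 8.2]{Gao2003}. So there is no argument in the paper to compare your proposal against, and I can only assess your sketch on its own terms.

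\textbf{Upper bound.} Your ball-tower idea is the right starting point and your inverse-limit argument for reconstructing a global isometry from eventually matching compact balls is sound. But you correctly identify, and then do not close, the main gap: the invariant $(r_n(X))_{n\in\Nat}$ together with ``eventual equality'' lands you at $E_1$, and $E_0 <_B E_1$ strictly (Kechris--Louveau), so this does not suffice. Your proposed fix --- that ``the continuous part of the tower collapses to a single real and the rest lives in a countable discrete sequence'' --- is not correct as stated: the isometry type of $\bar B_{n+1}(x_0)$ does \emph{not} determine that of $\bar B_n(x_0)$, only the finite multiset of isometry types of its radius-$n$ sub-balls, one of which is $\bar B_n(x_0)$. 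There is no single ``continuous slot''; rather, the genuine structure is that of tail equivalence on the path space of an inverse system with finite fibres. Such relations are hyperfinite (an increasing union of finite Borel equivalence relations), and hyperfinite Borel equivalence relations are $\leq_B E_0$ --- but that is a real theorem you would need to invoke and make Borel-uniform, and it is entirely absent from your sketch.

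\textbf{Lower bound.} Your backbone construction with $d(p_n,p_m)=2^{\max(n,m)+1}$ and a decoration at each $p_n$ encoding $\alpha(n)$ gives, as written, a reduction of \emph{equality} on $2^{\Nat}$ to isometry, not of $E_0$: each $p_n$ is rigidly determined by the distances it realizes, so any isometry $X_\alpha\to X_\beta$ forces $\alpha=\beta$. You acknowledge this and propose to ``introduce enough symmetry across scales,'' but you do not actually do it, and this calibration is exactly the content of the lower bound. (An alternative route, once the upper bound is secured so that the relation is Borel, is to verify non-smoothness directly and appeal to Theorem~\ref{thm:HKL}; but you would still need a concrete non-smoothness witness.)

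In short: the architecture is reasonable, but in both directions the decisive step is only gestured at, and in the upper bound your stated mechanism for descending from $E_1$ to $E_0$ is not the right one.
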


As for the other class, namely discrete Polish ultrametric spaces, Gao and Kechris isolated in~\cite[Section 8.C]{Gao2003} some lower bounds (although they could not show that they were optimal). 
In fact, they showed in~\cite[Theorem 8.10]{Gao2003} that the following 
equivalence relations are pairwise Borel bi-reducible:
\begin{enumerate-(1)}
\item
Isomorphism on \( \mathrm{WF} \), the set of well-founded trees on \( \Nat \) (see Notation~\ref{not:trees}).
\item \label{lowboundsultra-2}
Isometry between discrete closed subspaces of the Baire space \( \pre{\Nat}{\Nat} \).
\item \label{lowboundsultra-3}
Isometry between locally compact closed subspaces of \( \pre{\Nat}{\Nat} \).
\end{enumerate-(1)}
Since the metric spaces in items~\ref{lowboundsultra-2} and~\ref{lowboundsultra-3} are clearly Polish ultrametric spaces, the isomorphism relation between well-founded trees on \(\Nat\) (which is quite well-understood) 
becomes a lower bound for the complexity of isometry between 
discrete (or locally compact) Polish ultrametric spaces. Another lower bound, namely isomorphism between the so-called \emph{reverse trees}, has been proposed by Clemens in his preprint~\cite{ClemensPreprint}. 
Unfortunately, it was totally unclear whether these lower bounds were sharp or not, 
so the general problem of determining the exact complexity of isometry on discrete or locally compact Polish ultrametric spaces remained open.

A different approach to solve this problem was later suggested by Gao and Shao in~\cite{GaoShao2011}.

\begin{notation}
Let \( X = (X,d) \) be a metric space and \( y \in X \). We let
\[ 
R_X(y) = \{ d(y,x) \mid x \in X, x \neq y \}
 \] 
be the set of nonzero distances realized by \( y \) in \( X \), and
\[ 
R(X) = \bigcup_{y \in X} R_X(y) = \{ d(y,x) \mid x,y \in X, x \neq y \}
 \] 
be the set of all nonzero distances realized in \( X \).
\end{notation} 

\begin{lemma} \label{lemma:dense}
Let \( X = (X,d) \) be an ultrametric space and \( Y \) be any dense subspace of \( X \). Then
\[ 
R(X) = R(Y).
 \] 
\end{lemma}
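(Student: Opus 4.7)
The plan is to prove both inclusions of the equality $R(X)=R(Y)$. The inclusion $R(Y)\subseteq R(X)$ is immediate since $Y\subseteq X$, so the interesting content is $R(X)\subseteq R(Y)$.

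For this direction, I would fix $r\in R(X)$, so that $r=d(x_1,x_2)$ for some distinct $x_1,x_2\in X$. Using the density of $Y$ in $X$, pick sequences $(y^{(1)}_n)_{n\in\Nat}$ and $(y^{(2)}_n)_{n\in\Nat}$ in $Y$ with $y^{(1)}_n\to x_1$ and $y^{(2)}_n\to x_2$. Since $r>0$, for all sufficiently large $n$ we have $d(y^{(1)}_n,x_1)<r$ and $d(y^{(2)}_n,x_2)<r$.

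The key step is now to invoke Fact~\ref{fct:basicsonultra}\ref{fct:basicsonultra-1} twice. First, from $d(x_1,x_2)=r>d(x_1,y^{(1)}_n)$ one concludes $d(y^{(1)}_n,x_2)=r$. Second, applying the same observation to $d(y^{(1)}_n,x_2)=r>d(x_2,y^{(2)}_n)$ yields $d(y^{(1)}_n,y^{(2)}_n)=r$. Since $r>0$, the two points $y^{(1)}_n$ and $y^{(2)}_n$ are distinct elements of $Y$ realizing the distance $r$, so $r\in R(Y)$.

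There is essentially no obstacle here: the whole content of the lemma is that in an ultrametric the ``isosceles with small base'' phenomenon from Fact~\ref{fct:basicsonultra}\ref{fct:basicsonultra-1} propagates arbitrarily small perturbations in the two endpoints without affecting the distance between them. The only thing one has to be slightly careful about is to perturb the two endpoints one at a time rather than simultaneously, so that at each step the perturbation is strictly smaller than the current distance of the pair.
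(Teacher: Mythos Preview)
Your proof is correct and follows essentially the same approach as the paper: pick approximants in \( Y \) within distance \( < r \) of each endpoint and apply Fact~\ref{fct:basicsonultra}\ref{fct:basicsonultra-1} twice to see that the distance is preserved. The only cosmetic difference is that the paper picks a single pair \( y_0, y_1 \in Y \) directly from density rather than passing through convergent sequences, but the argument is the same.
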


\begin{proof}
The inclusion \( R(Y) \subseteq R(X) \) is obvious, so fix distinct \( x_0,x_1 \in X \). Using the density of \( Y \subseteq X \), choose \( y_0, y_1 \in Y \) such that \( d(y_i,x_i)  < d(x_0,x_1) \) for \( i = 0,1 \).
Then by applying (twice) Fact~\ref{fct:basicsonultra}\ref{fct:basicsonultra-1} we get
\[ 
d(x_0,x_1) = d(y_0,x_1) = d(y_0,y_1),
 \] 
whence \( d(x_0,x_1) \in R(Y) \). This shows \( R(X) \subseteq R(Y) \) and concludes the proof.
\end{proof}

In particular, if an ultrametric space \( X \) has density character \( \kappa \), then \( R(X) \) is a subset of \( \RR_{> 0} \) of size \( \leq \kappa \). Conversely, let \( D \subseteq \RR_{> 0} \) be of size \( \leq \kappa \): then the space \( U_D = (U_D,d_D) \) define by setting \( U_D = D \times \kappa \) and
\[ 
d_D((r,\alpha),(q,\beta)) = 
\begin{cases}
0 & \text{if } (r,\alpha) = (q,\beta) \\
\max \{ r , q \} & \text{otherwise}
\end{cases}
 \] 
is a discrete (hence complete) ultrametric space of density character \( \kappa \) such that \( R(U_D) = D \). In particular, letting \( \US_\Sep \subseteq \MS_\Sep \) be the collection of (codes for) Polish ultrametric spaces one has
\[ 
\{ R(X) \mid X \in \US_\Sep \} = \{ D \subseteq \RR_{>0} \mid D \text{ is (at most) countable} \}.
 \] 
 For notational simplicity, we denote by \( \mathcal{D}_\Sep \) the collection of (at most) countable subsets of \( \RR_{> 0} \) considered in the previous equation.

The idea of Gao and Shao is to study the restriction of the isometry relation \( \cong^i \), which in the sequel will be denoted by \( \cong^i_D \), to the hyperspace
\[ 
\US^D_\Sep = \{ X \in \US_\Sep \mid R(X) \subseteq D \}
 \] 
for a fixed in advance set of distances \( D \in \mathcal{D}_\Sep \) (using Lemma~\ref{lemma:dense} and the Borel selectors \( \psi_n \) 
in the obvious way, such an hyperspace is easily seen to be a Borel subset of \( \MS_\Sep \), hence it is a standard Borel space).
This is strictly related to our problem, since if \( D \) is bounded away from \( 0 \) (that is, if there is \( r > 0 \) such that \( D \subseteq [r;+ \infty) \)) then \( \US^D_\Sep \) consists only of discrete (hence locally compact) Polish ultrametric spaces.

In~\cite[Section 8]{GaoShao2011} it is observed that if \( D \) is \emph{not} bounded away from \( 0 \) (i.e.\ if \( 0 \) is an accumulation point of \( D \)), 
then \( \cong^i_D \) is Borel bi-reducible with \( \cong_{\mathsf{GRAPHS}} \). In fact, the spaces constructed by Gao and Kechris in their proof of Theorem~\ref{thm:lowerboundsfordiscrete}\ref{thm:lowerboundsfordiscrete-2} used nonzero distances in the set \( R(\pre{\Nat}{\Nat}) =  \{ 2^{-n} \mid n \in \Nat \} \), so that the result is true for all \( D \supseteq R(\pre{\Nat}{\Nat}) \); the general case is then obtained using in the obvious way the following ``transfer'' result.

\begin{lemma}[Gao-Shao, see~{\cite[Theorem 8.2]{GaoShao2011}}] \label{lemma:transfer}
Let \( D_1 , D_2 \in \mathcal{D}_\Sep \). If there exists an order preserving injection \( \rho \colon D_1 \cup \{ 0 \} \to  D_2 \cup \{ 0 \} \) with
\( \rho(0) = 0 \) that is continuous at \( 0 \), then \( {\cong^i_{D_1}} \leq_B {\cong^i_{D_2}} \).
Consequently, if there
is an order preserving bijection \( \rho \colon D_1 \cup \{ 0 \} \to D_2 \cup \{ 0 \} \) with \( \rho(0) = 0 \) that is both open and continuous at \( 0 \), then \( {\cong^i_{D_1}} \sim_B {\cong^i_{D_2}} \).
\end{lemma}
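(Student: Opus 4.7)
My plan is to construct a Borel reduction $\Phi \colon \US^{D_1}_\Sep \to \US^{D_2}_\Sep$ by transferring the metric through $\rho$. Specifically, on each $X = (X, d) \in \US^{D_1}_\Sep$ I set $d' = \rho \circ d$ and take $\Phi(X)$ to be the metric completion of $(X, d')$. Since $\rho$ is order preserving it commutes with $\max$, so applying $\rho$ to the ultrametric inequality for $d$ immediately yields the ultrametric inequality for $d'$, and thus $(X, d')$ is an ultrametric space. Continuity of $\rho$ at $0$ makes the identity map $(X, d) \to (X, d')$ continuous, so every $d$-dense countable subset of $X$ remains $d'$-dense and $(X, d')$ is separable; its completion $\Phi(X)$ is therefore Polish. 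By Lemma~\ref{lemma:dense} applied to the dense subspace $X \subseteq \Phi(X)$, we have $R(\Phi(X)) = R(X, d') \subseteq \rho(R(X)) \subseteq \rho(D_1) \subseteq D_2$, so $\Phi(X) \in \US^{D_2}_\Sep$. The Borel measurability of $\Phi$ is obtained by using the selectors $\psi_n$ from Notation~\ref{notation:psi_n} to compute the new pairwise distances on a canonical countable dense subset of $X$ and then producing a code via Coding~\ref{coding2}.

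I would then verify that $\Phi$ reduces $\cong^i_{D_1}$ to $\cong^i_{D_2}$. The forward direction is straightforward: any isometry $\varphi \colon (X, d) \to (Y, d)$ is automatically an isometry $(X, d') \to (Y, d')$ since $\rho$ is applied to both sides of each distance equation, and thus extends uniquely to an isometry $\Phi(X) \to \Phi(Y)$. For the backward direction, given an isometry $\psi \colon \Phi(X) \to \Phi(Y)$, the injectivity of $\rho$ means that $d$ is recovered from $d'$ via $\rho^{-1}$ on its image, so the main task is to produce an intrinsic description of the original $X$-points inside $\Phi(X)$ in purely metric terms and to show that $\psi$ respects this description, thereby restricting to an isometry $(X, d) \to (Y, d)$. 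The second ``consequently'' part of the lemma then follows by applying the first part both to $\rho$ and to $\rho^{-1}$: openness of $\rho$ at $0$ is exactly continuity of $\rho^{-1}$ at $0$, so the first part yields $\cong^i_{D_1} \leq_B \cong^i_{D_2}$ and $\cong^i_{D_2} \leq_B \cong^i_{D_1}$, giving the Borel bi-reducibility.

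I expect the main obstacle to be the backward direction of the first part: when $\rho$ fails to be open at $0$, there can exist $d'$-Cauchy sequences in $X$ that are not $d$-Cauchy, so the completion $\Phi(X)$ may contain ``spurious'' limit points with no counterpart in $X$, and in principle such spurious points could be matched by an isomorphism $\psi$ to genuine extra points in $Y$. Overcoming this requires either giving an intrinsic description of the $X$-points of $\Phi(X)$ in terms of the metric structure alone (for instance by distinguishing them from spurious limits through isolation properties or through the pattern of nearby ball-structure that distinguishes $d$-Cauchy limits from merely $d'$-Cauchy ones), or else refining $\Phi$ by attaching to each $x \in X$ some canonical auxiliary tagging structure with distances in $D_2$ which survives the completion and marks the original points in an isomorphism-invariant way.
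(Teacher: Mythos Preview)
The paper's argument is simpler than yours: it takes $X' = (X, \rho \circ d)$ directly, \emph{without} completing, and asserts $X' \in \US^{D_2}_\Sep$. With that choice the backward direction is immediate --- any isometry $(X, d') \to (Y, d')$ is a bijection of the same underlying sets, and applying $\rho^{-1}$ to every distance recovers an isometry $(X, d) \to (Y, d)$. No ``intrinsic description of the $X$-points'' is needed because there are no spurious points to worry about.

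Your decision to pass to the completion introduces a genuine gap: it breaks the backward implication, not just the backward argument. Take $D_1 = \{1 + 1/n : n \geq 1\}$, $D_2 = \{1/n : n \geq 1\}$, and $\rho(1+1/n) = 1/n$; this $\rho$ is an order-preserving injection and is vacuously continuous at $0$ since $0$ is isolated in $D_1 \cup \{0\}$. Let $X = \{1,2,3,\dotsc\}$ with $d_X(n,m) = 1 + 1/\min(n,m)$, and let $Y = X \cup \{\infty\}$ with $d_Y$ extended by $d_Y(n,\infty) = 1 + 1/n$. Both are discrete Polish ultrametric spaces in $\US^{D_1}_\Sep$, but $X \not\cong^i Y$: every point $n \in X$ realizes only the finite set $\{1+1/k : k \leq n\}$, whereas $\infty \in Y$ realizes all of $D_1$. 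Yet after applying $\rho$ the space $(X, \rho \circ d_X) = (\NN, 1/\min)$ is no longer complete --- the identity sequence is Cauchy --- and its completion is exactly $(\NN \cup \{\infty\}, 1/\min)$, which coincides with $(Y, \rho \circ d_Y)$ (already complete). Thus $\Phi(X) = \Phi(Y)$ on the nose while $X \not\cong^i Y$, so your $\Phi$ is not a reduction. Only your fallback of modifying $\Phi$ itself via tagging could possibly rescue this, but that is a different construction which you have not carried out; the paper instead avoids the whole issue by not completing.
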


The proof of Lemma~\ref{lemma:transfer} is easy: associate to any given \( X = (X,d) \in \US^{D_1}_\Sep \) the space \( X' = (X, \rho \circ d) \in \US^{D_2}_\Sep \), and check that the Borel map \( X \mapsto X' \) 
is the desired reduction. As we will see later, Lemma~\ref{lemma:transfer} can be improved by removing the condition on the continuity of the map \(\rho\) at \( 0 \): this is a nontrivial matter, as without such condition the 
map \( X \mapsto X' \) considered above needs not to produce \emph{Polish} ultrametric spaces, so a different argument must be employed.

The main problem left open by~\cite[Section 8]{GaoShao2011} is thus what happens to the isometry relation \( \cong^i_D \) \emph{for \( D \) bounded
 away from \( 0 \)}. (Recall that by Lemma~\ref{lemma:transfer} the complexity of such a relation depends only on the order type of \( D \), and each of these \( \cong^i_D \) is a lower bound for the complexity of the 
 isometry relation on discrete (hence also on locally compact) Polish ultrametric spaces.)
This problem has been recently solved in~\cite{Camerlo:2015ar}. Quite surprisingly, it turns out that the complexity of \( \cong^i_D \) \emph{for an arbitrary \( D \in \mathcal{D}_\Sep \)} depends just on the order type of \( D \), rather than on its topology as a subset of \( \RR \); in particular, whether \( 0 \) is a limit point of \( D \) or not is totally irrelevant (see Lemma~\ref{lemma:transferimproved}).

Let us fix some terminology and notation. We say that \( D \in \mathcal{D}_\Sep \) is \emph{ill-founded} if it contains an infinite strictly decreasing (with respect to the usual ordering on \( \RR \)) sequence of reals; otherwise we say that \( D \) is \emph{well-founded}. In the latter case, the order type of \( D \) is isomorphic to a (unique) countable ordinal, that will be denoted by \( \alpha_D \).

\begin{theorem}[Camerlo-Marcone-Motto Ros] \label{thm:maincammarmot}
\begin{enumerate-(1)}
\item \label{thm:maincammarmot-1} 
If \( D \in \mathcal{D}_\Sep \) is ill-founded, then \( \cong^i_D \) is Borel bi-reducible with \( \cong_{\mathsf{GRAPHS}} \) (\cite[Corollary 5.3]{Camerlo:2015ar}).
\item \label{thm:maincammarmot-2}
If \( D \in \mathcal{D}_\Sep\) is well-founded, then \( \cong^i_D \) is a Borel equivalence relation. Moreover, if both \( D , D' \in \mathcal{D}_\Sep \) are well-founded then
\[ 
{\cong^i_D} \leq_B {\cong^i_{D'}} \iff \alpha_D \leq \alpha_{D'},
 \] 
and for each Borel equivalence relation \( E \) classifiable by countable structures \( E \leq_B {\cong^i_D} \) for some well-founded \( D \in \mathcal{D}_\Sep \). Thus the isometry relations \( \cong^i_D \) for \( D \in \mathcal{D}_\Sep \) well-founded form an \(\omega_1 \)-chain of Borel equivalence relations cofinal among the Borel equivalence relations classifiable by countable structures 
(\cite[Theorem 5.15]{Camerlo:2015ar}).
\end{enumerate-(1)}
\end{theorem}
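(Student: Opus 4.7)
\textbf{Part (\ref{thm:maincammarmot-1}).} The upper bound \(\cong^i_D \leq_B \cong_{\mathsf{GRAPHS}}\) is immediate from Theorem \ref{thm:upperboundultrametric}(\ref{thm:upperboundultrametric-2}), since \(\US^D_\Sep \subseteq \US_\Sep\). For the matching lower bound, my plan is to invoke the improved version of Lemma \ref{lemma:transfer} announced in the text, i.e.\ Lemma \ref{lemma:transferimproved}, in which the continuity-at-\(0\) hypothesis on the order preserving map is dropped. Assuming that result, pick inside the ill-founded \(D\) a strictly decreasing sequence \((d_n)_{n \in \Nat}\) and set \(\rho(2^{-n}) = d_n\) and \(\rho(0)=0\); this is an order preserving injection of \(\{2^{-n} \mid n \in \Nat\} \cup \{0\}\) into \(D \cup \{0\}\), so the improved transfer lemma gives \(\cong^i_{\{2^{-n}\}} \leq_B \cong^i_D\). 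Composing with the reduction \(\cong_{\mathsf{GRAPHS}} \leq_B \cong^i_{\{2^{-n}\}}\) that can be extracted from the Gao--Kechris proof of Theorem \ref{thm:lowerboundsfordiscrete}(\ref{thm:lowerboundsfordiscrete-2}) (the construction manifestly uses only distances of the form \(2^{-n}\)) yields \(\cong_{\mathsf{GRAPHS}} \leq_B \cong^i_D\). The main obstacle here is the proof of the improved transfer lemma itself: the naive map \((X,d) \mapsto (X, \rho \circ d)\) fails when \(\rho\) is discontinuous at \(0\), since the new metric then induces the discrete topology on \(X\) and is not Polish unless \(X\) was already countable. To fix this, I would use Fact \ref{fct:basicsonultra} to decompose \(X\) ball-by-ball, apply \(\rho\) on the large-radius part, keep a rescaled copy of the original metric on small balls where the accumulation lives, and then re-complete the patched space.

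\textbf{Part (\ref{thm:maincammarmot-2}), Borelness and monotonicity.} Well-foundedness of \(D\) lets one enumerate it in strictly decreasing order as \((r_\xi)_{\xi < \alpha_D}\). By Fact \ref{fct:basicsonultra}, every \(X \in \US^D_\Sep\) canonically splits into its (countable) family of maximal open \(r_0\)-balls, each of which lies in \(\US^{D \cap (0, r_0)}_\Sep\), and two such spaces are isometric iff there is a bijection between their \(r_0\)-ball families matching them up to isometry. Inducting on \(\alpha_D\), with the sets \(D \cap (0,r_\xi)\) having strictly smaller rank, yields a Borel definition of \(\cong^i_D\): at each step one needs only a countable quantifier over matchings of already Borel-defined lower-rank isometry conditions. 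The monotonicity direction \(\alpha_D \leq \alpha_{D'} \implies \cong^i_D \leq_B \cong^i_{D'}\) is then a direct application of the (improved) transfer lemma, since any well-order of type \(\alpha_D\) order-embeds into any well-order of type \(\alpha_{D'}\).

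\textbf{Part (\ref{thm:maincammarmot-2}), strict hierarchy and cofinality.} For each countable \(\alpha\) I plan to exhibit a canonical well-founded \(D_\alpha\) of rank \(\alpha\) together with a distinguished \(\cong^i_{D_\alpha}\)-universal family of spaces whose top-level invariants encode, level-by-level via the recursive ball decomposition of the previous paragraph, arbitrary countable structures of Scott rank \(<\alpha\). Non-reducibility \(\cong^i_{D_{\alpha'}} \not\leq_B \cong^i_{D_\alpha}\) for \(\alpha < \alpha'\) is then obtained by a top-level pigeonhole argument analogous to the Friedman--Stanley hierarchy of iterated countable-set Borel equivalence relations. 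The cofinality claim finally follows because, by the Lopez-Escobar theorem (cf.\ Remark \ref{rmk:models}) and Scott analysis, every Borel classifiable-by-countable-structures equivalence relation is Borel reducible to isomorphism on a Borel class of countable structures of bounded Scott rank, and such a class can be Borel-coded inside \(\US^{D_\alpha}_\Sep\) for a sufficiently large \(\alpha\) using the same recursive encoding. I expect the crux of the entire argument to lie precisely here: ensuring faithfulness of the encoding, so that non-isomorphic invariants produce non-isometric spaces. This amounts to designing the coding rigidly enough that the ball decomposition of the Borelness step is essentially inverse to it, blocking accidental isometries at any level of the ball tree.
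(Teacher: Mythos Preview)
Your plan for part~(\ref{thm:maincammarmot-1}) has a circularity problem: in this paper Lemma~\ref{lemma:transferimproved} is obtained \emph{as a corollary of} Theorem~\ref{thm:maincammarmot}, not independently (look at its proof: it invokes Theorems~\ref{thm:upperboundultrametric}(\ref{thm:upperboundultrametric-2}) and~\ref{thm:maincammarmot} directly). You recognise this and offer a sketch of a direct argument (``apply $\rho$ on the large-radius part, keep a rescaled copy of the original metric on small balls, re-complete''), but this is not obviously a reduction: rescaling the metric on small balls changes isometry types, and you give no mechanism for recovering the original isometry class from the patched space. The paper takes a completely different route that bypasses any transfer lemma. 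For ill-founded $D$ it builds a \emph{direct} Borel reduction of $\cong_{\mathsf{TREES}}$ to $\cong^i_D$ via the tree-to-ultrametric map $T \mapsto X_T$ of Section~\ref{sec:treesisometry}, and the real content is Theorem~\ref{thm:red}: any isometry $X_T \to X_S$ restricts to an isometry $\varphi$ of the isolated points (i.e.\ of $T$ onto $S$), and although $\varphi$ need not be a tree isomorphism, the only obstruction is the presence of ``$\varphi$-switching pairs'' (a terminal node and its immediate predecessor getting swapped); un-swapping these yields a genuine tree isomorphism. This switching-pair analysis is the missing idea in your proposal, and once you have it the improved transfer lemma drops out for free.

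For part~(\ref{thm:maincammarmot-2}) your inductive Borelness argument is essentially the paper's (Claims~\ref{claim:successor} and~\ref{claim:limit}), and for the monotonicity direction note that well-founded $D$ are automatically bounded away from $0$, so the \emph{original} Lemma~\ref{lemma:transfer} already suffices and you need not appeal to the improved version here either. Your outline for strictness and cofinality, however, is vaguer than what the paper does and works harder than necessary. Strictness is obtained by showing $(\cong^i_D)^+ \leq_B \cong^i_{D'}$ whenever $\alpha_{D'}=\alpha_D+1$ (disjoint-union a sequence of $D$-spaces, linking the pieces at the single new top distance) and then quoting the Friedman--Stanley jump theorem (Fact~\ref{fct:jump}); no bespoke pigeonhole is needed. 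Cofinality does not go through Scott analysis or Lopez--Escobar at all: the paper simply cites H.~Friedman's result (Fact~\ref{fct:cofinal}) that isomorphism on well-founded trees of bounded rank, $(\cong_\alpha)_{\alpha<\omega_1}$, is already cofinal among Borel equivalence relations classifiable by countable structures, and then builds an explicit inductive reduction $\cong_\alpha \leq_B \cong^i_D$ for $D$ with $\alpha_D=\alpha+1$.
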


The original proof of Theorem~\ref{thm:maincammarmot}\ref{thm:maincammarmot-1} from~\cite{Camerlo:2015ar} used a new construction which, unfortunately, works only when \( D \in \mathcal{D}_\Sep \) is bounded away from \( 0 \).
This is of course not restrictive, 
as the case of \( D \in \mathcal{D}_\Sep \) with \( 0 \)  as an accumulation point was already treated 
in~\cite[Section 8]{GaoShao2011}; however, it is a bit unsatisfactory because it may suggest that, if not the result, at least its proof still depends on the topology of \( D \) (as a subset of \( \RR \)) rather than just on 
its order type. Nevertheless, in Section~\ref{sec:treesisometry} we will provide a modification (and generalization) of the argument from~\cite{Camerlo:2015ar} which will allow us to deal with both cases 
simultaneously,  thus eliminating the unsatisfactory case distinction discussed above; for this reason, we postpone the proof of Theorem~\ref{thm:maincammarmot} to the end of Section~\ref{sec:treesisometry}.

By taking a bounded away from \( 0 \) ill-founded \( D \in \mathcal{D}_\Sep \) in (the proof of) Theorem~\ref{thm:maincammarmot}\ref{thm:maincammarmot-1}, one obtains a uniform proof of both part~\ref{thm:lowerboundsfordiscrete-1} and part~\ref{thm:lowerboundsfordiscrete-2} of Theorem~\ref{thm:lowerboundsfordiscrete}. The same argument (combined with Theorem~\ref{thm:upperboundultrametric}\ref{thm:upperboundultrametric-2}) 
allows us to compute the precise complexity of the isometry relation on discrete Polish ultrametric spaces, on locally compact Polish ultrametric spaces, as well as on many other natural classes of Polish ultrametric spaces.

\begin{corollary}[Camerlo-Marcone-Motto Ros, see~{\cite[Corollary 5.3]{Camerlo:2015ar}}  and the comment after it] \label{cor:maincammarmot} 
The restrictions of the isometry relation to the following subclasses of \( \US_\Sep \) are all Borel bi-reducible with \( \cong_{\mathsf{GRAPHS}} \), the most complex analytic equivalence relation among the ones which are classifiable by countable structures (see Figure~\ref{fig:sumup}):
\begin{itemizenew}
\item
\emph{countable} Polish ultrametric spaces;
\item
\emph{discrete} Polish ultrametric spaces;
\item
\emph{locally compact} Polish ultrametric spaces;
\item
\emph{\(\sigma\)-compact} Polish ultrametric spaces.
\end{itemizenew}
\end{corollary}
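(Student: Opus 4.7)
The plan is to prove both directions of the bi-reducibility simultaneously for all four classes, by combining Theorem \ref{thm:upperboundultrametric}\ref{thm:upperboundultrametric-2} (the upper bound) with a judicious choice of distance set $D$ in Theorem \ref{thm:maincammarmot}\ref{thm:maincammarmot-1} (the lower bound).

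For the upper bound, I would first observe that each of the four classes, viewed as a subspace of $\US_\Sep$ with the induced $\sigma$-algebra, is a (non-standard) Borel space in the sense of the remark following Definition \ref{def:reducibility}. By Theorem \ref{thm:upperboundultrametric}\ref{thm:upperboundultrametric-2}, the isometry relation on all of $\US_\Sep$ is classifiable by countable structures, hence Borel reducible to $\cong_\mathsf{GRAPHS}$. Restricting any such Borel reduction $f \colon \US_\Sep \to \Mod^{\alzero}_\L$ to any Borel subspace of $\US_\Sep$ yields a Borel reduction of the corresponding restricted isometry relation to $\cong_\mathsf{GRAPHS}$, so the upper bound holds uniformly for all four subclasses.

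For the lower bound, the key move is to pick a single $D \in \mathcal{D}_\Sep$ which is simultaneously ill-founded and bounded away from $0$; a concrete choice is $D = \{ 1 + 2^{-n} \mid n \in \Nat \}$. Because $D$ is bounded away from $0$, every $X \in \US^D_\Sep$ is automatically discrete (any ball of radius $\tfrac{1}{2}$ around a point of $X$ contains only that point), and hence countable, locally compact, and $\sigma$-compact. Because $D$ is ill-founded, Theorem \ref{thm:maincammarmot}\ref{thm:maincammarmot-1} supplies a Borel reduction $\cong_\mathsf{GRAPHS} \leq_B {\cong^i_D}$. The range of this reduction is contained in the class of discrete Polish ultrametric spaces, so it witnesses $\cong_\mathsf{GRAPHS} \leq_B {\cong^i}$ on discrete Polish ultrametric spaces; composing with the set-theoretic inclusions (discrete $\subseteq$ countable, locally compact, $\sigma$-compact) the same map witnesses the lower bound for all four subclasses.

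Combining both directions yields Borel bi-reducibility with $\cong_\mathsf{GRAPHS}$ in every case. The only substantive step in this proof, beyond invoking the stated theorems, is ensuring that the witnessing set $D$ lies in the intersection of the ``ill-founded'' and ``bounded away from $0$'' conditions, so that Theorem \ref{thm:maincammarmot}\ref{thm:maincammarmot-1} lands inside the \emph{smallest} of the four subclasses; the bulk of the work is entirely absorbed into the proof of Theorem \ref{thm:maincammarmot}, which is deferred to Section \ref{sec:treesisometry}. There is thus no genuine obstacle at the level of this corollary: it is a packaging argument whose only delicate point is the simultaneous availability of order-type and topological constraints on $D$.
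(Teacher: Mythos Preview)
Your proposal is correct and matches the paper's approach exactly: the paragraph preceding the corollary explains that one takes an ill-founded $D \in \mathcal{D}_\Sep$ bounded away from $0$ in Theorem~\ref{thm:maincammarmot}\ref{thm:maincammarmot-1} for the lower bound, and combines this with Theorem~\ref{thm:upperboundultrametric}\ref{thm:upperboundultrametric-2} for the upper bound. Your concrete choice $D = \{1 + 2^{-n} \mid n \in \Nat\}$ and the observation that discrete is contained in each of the other three classes make the packaging fully explicit.
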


Let us observe that Theorem~\ref{thm:maincammarmot} and Corollary~\ref{cor:maincammarmot} also show that both the lower bounds for the complexity of discrete/locally compact Polish 
ultrametric spaces isolated by Gao-Kechris and Clemens (see the paragraph after Theorem~\ref{thm:heineborelultra}) were not sharp. In fact:
\begin{itemizenew}
\item
by the subsequent Proposition~\ref{prop:isoWF}, the isomorphism relation on the collection \( \mathrm{WF} \) of well-founded trees on \(\Nat\) is not Borel bi-reducible with any 
equivalence relation with standard Borel domain; since \( \cong^i \) on discrete/locally compact Polish ultrametric spaces is instead Borel bi-reducible with such an equivalence relation 
(e.g.\ \( \cong_{\mathsf{GRAPHS}} \)), this shows that the isomorphism relation on \( \mathrm{WF} \) is strictly simpler than the latter isometry relations;
\item
it is not hard to show that the isomorphism relation on reverse trees is Borel bi-reducible with \( \cong^i_D \), where \( D \in \mathcal{D}_\Sep \) is some/any well-founded set of order type \(\omega\): since the latter is 
a Borel equivalence relation by Theorem~\ref{thm:maincammarmot}\ref{thm:maincammarmot-2}, we obtain that such an isomorphism relation is necessarily strictly simpler than the isometry on discrete/locally compact 
Polish ultrametric spaces, which by Theorem~\ref{thm:maincammarmot}\ref{thm:maincammarmot-1} is proper analytic.
\end{itemizenew}

Finally, Theorem~\ref{thm:maincammarmot} yields the following improvement of Lemma~\ref{lemma:transfer}.
\begin{lemma}[Camerlo-Marcone-Motto Ros, see~{\cite[Corollary 5.11]{Camerlo:2015ar}}] \label{lemma:transferimproved}
Let \( D_1 , D_2 \in \mathcal{D}_\Sep \). If there exists an order preserving injection \( \rho \colon D_1  \to  D_2 \), then \( {\cong^i_{D_1}} \leq_B {\cong^i_{D_2}} \).
Consequently, if \( D_1  \) and \( D_2 \) are order-isomorphic, then \( {\cong^i_{D_1}} \sim_B {\cong^i_{D_2}} \).
\end{lemma}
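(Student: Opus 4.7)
The plan is to bypass any direct attempt to build a Borel map between the ambient hyperspaces of ultrametric codes (which is the strategy of Lemma~\ref{lemma:transfer} and which genuinely fails once $\rho$ is discontinuous at $0$), and instead to deduce the reduction from the classification provided by Theorem~\ref{thm:maincammarmot}. The key point is that Theorem~\ref{thm:maincammarmot} shows that the Borel complexity of $\cong^i_D$ depends only on the order type of $D$: ill-foundedness collapses everything to $\cong_{\mathsf{GRAPHS}}$, while well-founded $D$'s give a strictly increasing $\omega_1$-chain indexed by the ordinals $\alpha_D$. An order-preserving injection $\rho\colon D_1\to D_2$ preserves exactly this kind of order-theoretic information, so a short case analysis should suffice.

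The first step is to record the two structural consequences of the existence of $\rho$. (i) If $D_1$ is ill-founded, then so is $D_2$, because the $\rho$-image of an infinite strictly decreasing sequence in $D_1$ is again strictly decreasing in $D_2$. (ii) If $D_1$ is well-founded then $\rho(D_1)\subseteq D_2$ is a well-ordered subset of $D_2$ of order type $\alpha_{D_1}$; in particular, if $D_2$ is also well-founded then $\alpha_{D_1}\leq\alpha_{D_2}$.

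The proof then splits into three cases. If $D_1$ is ill-founded, then by (i) so is $D_2$, and both $\cong^i_{D_1}$ and $\cong^i_{D_2}$ are Borel bi-reducible with $\cong_{\mathsf{GRAPHS}}$ by Theorem~\ref{thm:maincammarmot}\ref{thm:maincammarmot-1}, so trivially $\cong^i_{D_1}\leq_B\cong^i_{D_2}$. If both $D_1$ and $D_2$ are well-founded, then by (ii) and Theorem~\ref{thm:maincammarmot}\ref{thm:maincammarmot-2} we directly obtain $\cong^i_{D_1}\leq_B\cong^i_{D_2}$. Finally, if $D_1$ is well-founded but $D_2$ is ill-founded, then Theorem~\ref{thm:maincammarmot}\ref{thm:maincammarmot-2} gives that $\cong^i_{D_1}$ is Borel and, by Theorem~\ref{thm:upperboundultrametric}\ref{thm:upperboundultrametric-2}, classifiable by countable structures; hence $\cong^i_{D_1}\leq_B\cong_{\mathsf{GRAPHS}}\sim_B\cong^i_{D_2}$ by Theorem~\ref{thm:maincammarmot}\ref{thm:maincammarmot-1}. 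The consequence for order-isomorphic sets is then immediate: an order isomorphism and its inverse are both order-preserving injections, yielding reductions in both directions.

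The main conceptual obstacle here is not the argument itself, which is purely bookkeeping once Theorem~\ref{thm:maincammarmot} is in hand, but rather the reason why one cannot simply imitate the proof of Lemma~\ref{lemma:transfer}: the naive map $(X,d)\mapsto(X,\rho\circ d)$ can destroy completeness when $\rho$ is discontinuous at $0$, because a Cauchy sequence in $(X,d)$ whose distances tend to $0$ may be sent to a sequence whose new pairwise distances are bounded below, hence no longer Cauchy, so the output need not be a Polish space. By routing the argument through the order-type invariant of Theorem~\ref{thm:maincammarmot} one avoids constructing a Borel map between the hyperspaces of ultrametrics altogether, which is precisely why the continuity hypothesis on $\rho$ can be dropped.
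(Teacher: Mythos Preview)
Your proof is correct and follows essentially the same approach as the paper's: deduce the reduction from Theorem~\ref{thm:maincammarmot} by a case analysis on well-foundedness. The paper's version is slightly more economical, splitting only on whether $D_2$ is ill-founded (your cases 1 and 3 merge, since ${\cong^i_{D_1}} \leq_B {\cong_{\mathsf{GRAPHS}}}$ holds for every $D_1$ by Theorem~\ref{thm:upperboundultrametric}\ref{thm:upperboundultrametric-2}, regardless of its well-foundedness).
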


\begin{proof}
If \( D_2 \) is ill-founded, then
\[ 
{\cong^i_{D_1}} \leq_B {\cong_{\mathsf{GRAPHS}}} \leq_B {\cong^i_{D_2}}
 \] 
by Theorems~\ref{thm:upperboundultrametric}\ref{thm:upperboundultrametric-2} and~\ref{thm:maincammarmot}\ref{thm:maincammarmot-1}. If instead \( D_2 \) is well-founded and there is an order preserving 
injection
\( \rho \colon D_1 \to D_2 \), then \( D_1 \) is well-founded too, and \(\alpha_{D_1} \leq \alpha_{D_2} \): thus the result follows from Theorem~\ref{thm:maincammarmot}\ref{thm:maincammarmot-2}.
\end{proof}

Let us conclude this section by mentioning again the two main open problems in this area of research.

\begin{mainproblem} \label{problem:zerodim}
Determine the exact complexity (in the hierarchy of Borel
reducibility) of isometry between \emph{zero-dimensional} Polish metric spaces.
\end{mainproblem}

Recall that by Theorems~\ref{thm:lowerboundsfordiscrete}\ref{thm:lowerboundsfordiscrete-1} and~\ref{thm:lowerboundzerodim}, the above isometry relation is \emph{strictly} more complicated than \( \cong_{\mathsf{GRAPHS}} \): 
this lead Gao, Kechris and Clemens to conjecture that it is Borel bi-reducible with \( \cong^i \) on the whole \( \MS_\Sep \), that is with the most complex orbit equivalence relation.

\begin{mainproblem} \label{problem:loccompact}
Determine the exact complexity (again with respect to Borel reducibility) of the isometry relation on \emph{locally compact} Polish metric spaces.
\end{mainproblem}

By Theorem~\ref{thm:lowerboundsfordiscrete}\ref{thm:lowerboundsfordiscrete-1}, the isomorphism relation \( \cong_{\mathsf{GRAPHS}} \) is a lower bound for this isometry relation too, but in this case it may be sharp: 
in fact, Gao and Kechris explicitly conjectured in~\cite{Gao2003} that this is the case. In this direction, Hjorth proved that isometry of locally compact Polish metric
spaces is at least reducible to graph isomorphism by an \emph{absolutely \( \mathbf{\Delta}^1_2 \)} function: this provides
strong evidence for the truth of the conjecture (although there are examples%
\footnote{One such example is given by \( \cong_{\mathsf{GRAPHS}} \), which is absolutely  \( \mathbf{\Delta}^1_2 \) reducible to isometry on the set \( \mathrm{WF} \) of well-founded trees on \(\Nat\), but is not Borel reducible to it by Proposition~\ref{prop:isoWF}.} 
of analytic equivalence relations which are absolutely \( \mathbf{\Delta}^1_2 \) reducible to another one but not Borel reducible to it).
A related problem, still widely open, is that of determining the exact complexity of isometry between \emph{homogeneous} locally compact Polish metric spaces, for which \( \cong_{\mathsf{GRAPHS}} \) 
is again a lower bound by a result of Clemens (see Theorem~\ref{thm:clemenshomogeneous}).

\section{Trees and isometry} \label{sec:treesisometry}

Given an ordinal \(\alpha \in \On \) and an infinite cardinal \( \kappa \), let \( \pre{<\alpha}{\kappa} \) be the collection of all sequences of the form \( u \colon \beta \to \kappa \) 
for some \( \beta < \alpha \) (recall that an ordinal \(\beta\) is identified with the set \( \{ \gamma \in \On \mid \gamma < \beta \} \) of its predecessors, so that \( 0 \) is the empty set). The set \( \pre{<\alpha}{\kappa} \) is ordered by inclusion, i.e.\ for \( u,v \in \pre{<\alpha}{\kappa} \) we set
\[ 
u \subseteq v \iff u \text{ is an initial segment of } v.
 \] 
As usual, for \( u \in \pre{<\alpha}{\kappa} \) we call the ordinal \( \dom(u) \) \emph{length of \( u \)}, and denote it by \( \lh(u) \). The sequences \( u,v \in \pre{<\alpha}{\kappa} \) are \emph{compatible} if \( u \subseteq v \) or \( v \subseteq u \), otherwise they are \emph{incompatible}. Given \( u \in \pre{<\alpha}{\kappa} \) and \( \beta \leq \lh(u) \), we denote by \( u \restriction \beta \) the \emph{restriction} of \( u \) to \(\beta\), i.e.\ the unique \( v \subseteq u \) such that \( \lh(v) = \beta \).

\begin{defin} \label{def:alpha-tree}
Let \( \alpha \in \On \) and \( \kappa \) be an infinite cardinal. An \emph{\(\alpha\)-tree of size \( \kappa \)} is a subset \( T \) of \( \pre{<\alpha}{\kappa} \) (still ordered by inclusion) satisfying the following conditions: 
\begin{enumerate-(1)}
\item
\( T \) is \( \subseteq \)-downward closed;
\item \label{cond:2}
\( |T| = \kappa \).
\end{enumerate-(1)}
The collection of all \( \alpha \)-trees of size \( \kappa \) is denoted by \( \T{\alpha}{\kappa} \).
\end{defin}

Notice that for all distinct \( u,v \in \pre{< \alpha}{\kappa} \) it is always the case that there is \( \beta \leq \lh(u),\lh(v) \) such that \( u \restriction \beta = v \restriction \beta \) and either one of \( u(\beta) , v(\beta) \) is undefined or 
else \( u(\beta) \neq v(\beta) \); indeed, it is enough to set 
\[ 
\beta = \min \{ \gamma \leq \lh(u),\lh(v) \mid \gamma = \lh(u) \text{ or } \gamma = \lh(v) \text{ or } u(\gamma) \neq v(\gamma) \} . 
\]
We denote by \( u \sqcap v \) the sequence 
\( u \restriction \beta = v \restriction \beta \) (with \(\beta\) as above): this is the longest common initial segment of \( u \) and \( v \).

A node \( u \) of an \(\alpha\)-tree \( T \) is called \emph{terminal} if there is no proper extension of \( u \) in \( T \). An \emph{immediate predecessor of \( u \)} is a 
node \( v \in T \) such that there is no intermediate \( v \subsetneq w \subsetneq u \). Notice that \( u \) has an immediate predecessor if and only if \( \lh(u) \) is a 
successor ordinal \( \beta +1 \), and in this case its unique immediate predecessor is \( u \restriction \beta \). An \(\alpha\)-tree is \emph{pruned} if for every \( u \in T \) 
and \( \beta < \alpha \) there is \( v \in T \) such that \( \lh(v) = \beta \) and \( v \) is compatible with \( u \). Notice that when \( \alpha > \omega \) this is stronger than 
requiring that \( T \) has no terminal node.

The following is the natural notion of isomorphism between \(\alpha\)-trees of size \( \kappa \).

\begin{defin}
Let \( T,S \in \T{\alpha}{\kappa}\). An \emph{isomorphism} of \( T \) into \( S \) is a bijective map \( \varphi \colon T \to S \) which respect the inclusion relation, i.e.\ such that for all \( u,v \in T \)
\[ 
 u \subseteq v \iff \varphi(u) \subseteq \varphi(v).
 \] 
We denote by \( \cong \)  the relation of isomorphism on \( \T{\alpha}{\kappa} \). 
\end{defin}

A simple induction on the length of sequences in \( T \) shows that any isomorphism \( \varphi \colon T \to S \) automatically preserves their length, i.e.\ for every \( u \in T \)
\begin{equation} \label{eq:isomorphismspreservelengths} 
\lh(\varphi(u)) = \lh(u).
 \end{equation}

Let now \( \alpha \) be countable%
\footnote{The countability of \(\alpha\) is a necessary condition here because since \( \RR \) is separable there are no uncountable strictly decreasing sequences of reals.} 
 and \( D \subseteq \RR_{> 0} \) be such that \( D \) contains a strictly decreasing sequence \( (r_\beta)_{\beta< \alpha} \). Then we can turn any tree \( T \in \T{\alpha}{\kappa} \) into a complete ultrametric space \( X_T \) of density character \( \kappa \) and nonzero distances in \( D \) as follows. Equip \( T \) with the distance \( d_T \) given by
\[ 
d_T(u,v) = 
\begin{cases}
r_{\lh(u \sqcap v)} & \text{if } u \neq v \\
0 & \text{otherwise}.
\end{cases}
 \] 
This is clearly a well-defined ultrametric on \( T \), and hence we can let \( X_T = (X_T, d_T) \) be the completion of the ultrametric space \( (T,d_T) \). It is clear that \( T \) is dense in \( X_T \) and that it coincides with the collection of isolated points of \( X_T \) (so that \( X_T \) has density character \( \kappa \) by Definition~\ref{def:alpha-tree}\ref{cond:2}); indeed each \( u \in T \) is isolated by the open sphere \( B_{r_{\lh(u)}}(u) \), and each \( x \in X_T \setminus T \) is by definition a limit point. Furthermore, if the chosen sequence \( (r_\beta)_{\beta<\alpha}  \) is bounded away from \( 0 \), or if there is \( \beta < \alpha \) such that \( \lh(u) \leq \beta \) for every \( u \in T \) (which is always the case if \(\alpha\) is a successor ordinal), 
then \( X_T = T \) is a discrete space. In the remaining case, \( X_T \) may be identified with \( T \cup [T] \), where \( [T] \) is the body of \( T \) defined by
\begin{equation} \label{eq:body}
[T] = \{ f \colon \alpha  \to \kappa \mid f \restriction \beta  \in T \text{ for every } \beta < \alpha   \}.
 \end{equation}

\begin{remark} \label{rmk:crucial2}
By definition of the distance \( d_T \), we have that for distinct \( u,v \in T \)
\[ 
u \subseteq v \iff u \sqcap v = u \iff d_T(u,v) = r_{\lh(u)}.
 \] 
\end{remark}

\begin{remark} \label{rmk:crucial}
It is important to notice the following properties of the metric \( d_T \). Recall that given \( u \in T \) we let 
\[ 
R_T(u) = \{ r \in D \mid \exists v \in T \, (d_T(u,v) = r \}
 \] 
be the set of nonzero distances realized by \( u \) in \( T \). Then
\begin{itemize}
\item
\( \{ r_\beta \mid \beta < \lh(u) \}  \subseteq R_T(u) \subseteq \{ r_\beta \mid \beta \leq \lh(u) \} \);
\item
\( r_{\lh(u)}  \in R_T(u) \) if and only if \( u \) is not terminal in \( T \).
\end{itemize}
Indeed 
\[ 
d_T(u, u \restriction \beta) = r_{\lh(u \restriction \beta)} = r_\beta
 \] 
  for every \( \beta < \lh(u) \),
\( d_T(u,v) > r_{\lh(u)} \) if \( u \not\subseteq v \), while \( d_T(u,v) = r_{\lh(u)} \) if (and only if) \( u \subsetneq v \). 
\end{remark}

\begin{theorem} \label{thm:red}
For every \( T,S \in \T{\alpha}{\kappa} \)
\[ 
T \cong S \iff X_T \cong^i X_S.
 \] 
\end{theorem}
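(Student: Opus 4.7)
I plan to handle the two implications of the biconditional separately, with rather different techniques.

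\textbf{Forward direction ($\Rightarrow$).} A tree isomorphism $\varphi \colon T \to S$ preserves the length function by equation~\eqref{eq:isomorphismspreservelengths} and, being an order-isomorphism between meet-semilattices, also preserves the meet operation $\sqcap$ (push the inclusions $u \sqcap v \subseteq u, v$ through $\varphi$ and do the analogous argument with $\varphi^{-1}$). Consequently
\[
d_S(\varphi(u), \varphi(v)) = r_{\lh(\varphi(u) \sqcap \varphi(v))} = r_{\lh(\varphi(u \sqcap v))} = r_{\lh(u \sqcap v)} = d_T(u, v)
\]
for every pair of distinct $u, v \in T$, so $\varphi$ is a distance-preserving bijection between the dense subsets $T \subseteq X_T$ and $S \subseteq X_S$. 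By uniform continuity it extends uniquely to the required isometry $X_T \to X_S$. This direction is essentially routine.

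\textbf{Backward direction ($\Leftarrow$).} Given an isometry $\psi \colon X_T \to X_S$, my plan is to reconstruct the partial order $(T, \subseteq)$ canonically from the metric data $(X_T, (r_\beta)_{\beta < \alpha})$ and then transport the construction across $\psi$. The central tool is the family of closed balls $\bar{B}(x, r_\beta)$ of standard radii: by ultrametricity any two are nested or disjoint, so the collection of such balls of all radii forms a partial order under inclusion, and isometries permute this system equivariantly. The tree $T$ should then be identified (as an abstract poset) with the image of the map $t \mapsto \bar B(t, r_{\lh(t)})$, which is the "subtree ball at $t$" when $t$ is non-terminal and the singleton $\{t\}$ when $t$ is terminal; this map is injective and order-reversing. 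Running the same recipe inside $X_S$ and chasing everything through the $\psi$-induced bijection between the ball systems should deliver a tree isomorphism $T \to S$.

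\textbf{Main obstacle.} The delicate step in the backward direction is characterizing metrically which closed balls are in the image of $t \mapsto \bar B(t, r_{\lh(t)})$, because the same closed set can occur as a $\sim_\beta$-class (where $x \sim_\beta y \iff d(x, y) \leq r_\beta$) for several values of $\beta$. Most problematically, for a non-terminal node $t$ of length $\gamma$ the singleton $\{t\}$ becomes a $\sim_\beta$-class for every $\beta > \gamma$, even though $t$ is already represented by the strictly larger subtree ball first appearing at $\beta = \gamma$. I expect the fix to be a purely metric criterion that, among all balls containing a given point, retains exactly the one that first appears as a $\sim_\beta$-class (equivalently, the largest one realizing that point), thereby discarding these "ghost" singletons while still keeping the genuine leaf singletons attached to terminal nodes. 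Once this filtering is made precise, the compatibility of $\psi$ with the $\sim_\beta$-partitions at every $\beta$ yields the tree isomorphism by a routine equivariance argument.
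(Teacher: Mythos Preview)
Your forward direction is fine and matches the paper. The backward direction, however, has a genuine gap at exactly the point you flagged as the ``main obstacle'': the purely metric filtering criterion you hope for cannot exist, because the image of the map \(t \mapsto \bar B(t, r_{\lh(t)})\) is \emph{not} preserved by isometries of \(X_T\).

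Here is the obstruction in concrete form (it is precisely the phenomenon the paper calls a \emph{switching pair}). Suppose \(v_1 \in T\) is terminal with immediate predecessor \(v_0\), and \(v_1\) is the unique child of \(v_0\). Then \(R_T(v_0) = \{r_\beta : \beta \le \lh(v_0)\} = R_T(v_1)\), and the transposition exchanging \(v_0\) and \(v_1\) (fixing all other points) is an isometry of \((T,d_T)\). Under your ball map, \(v_0\) goes to the two-point set \(\{v_0,v_1\}\) and \(v_1\) goes to the singleton \(\{v_1\}\), so the image of the map contains \(\{v_1\}\) but not \(\{v_0\}\). The isometry swaps these two singletons, hence does \emph{not} stabilize the image. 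Consequently no isometry-invariant criterion can pick out exactly that image; in particular your ``first appearance'' test fails, since the ghost singleton \(\{v_0\}\) first appears at level \(\lh(v_0)+1 = \lh(v_1)\), the same level at which the genuine leaf singleton \(\{v_1\}\) first appears, and the two are metrically indistinguishable. More generally, whenever a non-terminal node has a terminal child one level up, the corresponding singletons are interchangeable, so the filtering step cannot be completed by a metric condition alone.

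The paper takes a different route for \(\Leftarrow\). It observes that \(T\) is exactly the set of isolated points of \(X_T\), so the isometry \(\psi\) restricts to an isometry \(\varphi \colon (T,d_T) \to (S,d_S)\). This \(\varphi\) need not be a tree isomorphism, but the paper proves that the \emph{only} obstruction is length: any \(u\) with \(\lh(\varphi(u)) \neq \lh(u)\) must belong to a switching pair \((v_0,v_1)\) of the type above (Claim~\ref{claim:switsching}). One then corrects \(\varphi\) to \(\varphi'\) by swapping the images on each switching pair; a short computation (Claim~\ref{claim:isometry}) shows \(\varphi'\) is still an isometry and now preserves lengths, hence is a tree isomorphism by Remark~\ref{rmk:crucial2}. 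Your ball-poset idea could in principle be salvaged---the isometry-invariant poset of \(\sim_\beta\)-classes is \(T\) with one extra leaf glued to every non-terminal node, and recovering \(T\) from it up to isomorphism amounts to ``removing one leaf-child from each non-leaf''---but making that removal well-defined is exactly the switching-pair analysis again, so you would not be avoiding the paper's argument, only repackaging it.
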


\begin{proof}
One direction is easy: if \( \varphi \colon T \to S\) is an isomorphism, then it preserves also the intersection operation \( \sqcap \), and thus \( \varphi \) is also an 
isometry between \( (T,d_T) \) and \( (S,d_S) \) that can be extended to an isometry \( \psi \) between \( X_T \) and \( X_S \).

Conversely, let \( \psi \colon X_T \to X_S \) be an isometry. Since \( \psi \) must preserve isolated points (in both directions), its restriction 
\( \varphi = \psi \restriction T \) is an isometry between \( (T,d_T) \) and \( (S,d_S) \).
 Unfortunately, \( \varphi \) needs not to be an isomorphism between \( \alpha \)-trees: if e.g.\ \( T = S \) and \( u \) is a terminal node of \( T \) with an immediate 
 predecessor \( v \), then the map switching \( u \) and \( v \) and fixing all other points of \( T \) is an isometry of \( (T,d_T) \) into itself, but it is not an 
 automorphism of \( T \) as an \(  \alpha \)-tree. However, we will now show that this is the unique possible obstruction for \( \varphi \) being an isomorphism.

 Call a pair \( ( v_0, v_1 ) \) a \emph{\(\varphi\)-switching pair} if the 
 following conditions hold:
\begin{itemize}
\item
\( v_1 \) is a terminal node of \( T \) and \( v_0 \) is an immediate predecessor of it;
\item
\( \varphi(v_0) \) is a terminal node of \( S \) and \( \varphi(v_1) \) is an immediate predecessor of it;
\item
\( \lh(v_i) = \lh(\varphi(v_{1-i})) \) for \( i = 0,1 \).
\end{itemize}
Notice that \( ( v_0,v_1 ) \) is a \(\varphi\)-switching pair if and only if \( ( \varphi(v_1), \varphi(v_0) ) \) is a \( \varphi^{-1} \)-switching pair.

\begin{claim} \label{claim:switsching}
Let \( u \in T \) be such that \( \lh(\varphi(u)) \neq \lh (u) \). Then \( u \) belongs to a \(\varphi\)-switching pair.
\end{claim}

\begin{proof}[Proof of the claim]
Suppose first that \( u \) is a terminal node, and let \( \gamma = \lh(u) < \alpha \). Then \( R_T(u) = \{ r_\beta \mid \beta < \gamma \} \) by Remark~\ref{rmk:crucial}. Since \( \varphi \) preserves distances realized by points, we also have 
\begin{equation} \label{eq:ru1}  
R_S(\varphi(u)) = \{ r_\beta \mid \beta < \gamma \}.
\end{equation} 
Since we assumed \( \lh(\varphi(u)) \neq \lh (u) \), by Remark~\ref{rmk:crucial} again and~\eqref{eq:ru1} we have \( \lh(\varphi(u)) < \lh(u) = \gamma \) and hence
\begin{equation} \label{eq:ru2}
R_S(\varphi(u)) = \{ r_\beta \mid \beta \leq \lh(\varphi(u)) \}.
\end{equation}
Combining~\eqref{eq:ru1},~\eqref{eq:ru2} and Remark~\ref{rmk:crucial}, we thus have that 
\begin{itemize}
\item
\( \gamma = \lh(u) = \lh(\varphi(u))+1 \);
\item
\( \varphi(u) \) is not a terminal node in \( S \).
\end{itemize}
Let \( \delta = \lh(\varphi(u)) \) (so that \( \gamma = \delta+1 \)), and let
 \( v = u \restriction \delta \), so that \( v \) is the immediate predecessor of \( u \): we claim that \( ( v,u ) \) is a \(\varphi\)-switching pair. Indeed, notice that by Remark~\ref{rmk:crucial} and~\eqref{eq:ru1}--\eqref{eq:ru2}
\begin{equation} \label{eq:rv}
R_T(v) = \{ r_\beta \mid \beta \leq \delta \} = \{ r_\beta \mid \beta \leq \lh(\varphi(u)) \} = R_S(\varphi(u)) = \{ r_\beta \mid \beta < \gamma \},
 \end{equation}
 so that \( R_S(\varphi(v)) = \{ r_\beta \mid \beta < \gamma \} \) as well. Since \( d_S(\varphi(u), \varphi(v)) = d_T(u,v) = r_{\lh(v)} = r_{\lh(\varphi(u))} \), 
we have \( \varphi(u) \subsetneq \varphi(v) \) by Remark~\ref{rmk:crucial2}. From this,~\eqref{eq:rv} and Remark~\ref{rmk:crucial} it follows that \( \lh(\varphi(v)) = \gamma \), \( \varphi(v) \) is a terminal node of \( S \), and \( \varphi(u) \) is its immediate predecessor. This concludes the proof of the fact that \( (v,u) \) is a \(\varphi\)-switching pair.
 
Suppose now that \( u \) is not a terminal node, so that \( R_T(u) = \{ r_\beta \mid \beta \leq \lh(u) \} \) by Remark~\ref{rmk:crucial}. 
Since \(\varphi\) preserves realized distances, we also have \( R_S(\varphi( u )) = \{ r_\beta \mid \beta \leq \lh(u) \} \), and since \( \lh(\varphi(u))\neq \lh(u) \) by hypothesis, by Remark~\ref{rmk:crucial} again we have 
that \( R_S(\varphi( u)) = \{ r_\beta \mid \beta < \lh(\varphi(u)) \} \) and thus \( \varphi(u) \) is a terminal node of \( S \). Switching the role of \( T \) and \( S \) and replacing \( \varphi \) with \( \varphi^{-1} \) in the first part of the proof, we obtain that 
\( \varphi(u) \) belongs to a \( \varphi^{-1} \)-switching pair \( ( w_0,w_1 ) \), so that \( u = \varphi^{-1}(\varphi(u)) \) belongs to the \( \varphi \)-switching pair \( ( \varphi^{-1}(w_1), \varphi^{-1}(w_0) ) \) and we are done again.
\end{proof}
Now define \( \varphi' \colon T \to S \) as follows. If \( u \in T \) does not belong to any \( \varphi \)-switching pair, then set \( \varphi'(u) = \varphi(u) \). 
If instead \( ( v_0,v_1 ) \) is a \(\varphi\)-switching pair, then set \( \varphi'(v_i) = \varphi(v_{1-i}) \) for \( i = 0,1 \).  
Notice that since the immediate predecessor of a node, if it exists, is unique and since \( \varphi \) is bijective, we get that distinct \(\varphi\)-switching pairs are necessarily disjoint: therefore the map \( \varphi' \) is a 
well-defined bijection between \( T \) and \( S \). Moreover, by Claim~\ref{claim:switsching} and the definition of a \(\varphi\)-switching pair, \( \varphi' \) preserves the length of sequences, i.e.\ for every \( u \in T \)
\[ 
\lh(\varphi'(u)) = \lh(u).
 \]

 \begin{claim} \label{claim:isometry}
\( \varphi' \) is still an isometry between \( S \) and \( T \).
\end{claim}

\begin{proof}[Proof of the claim]
First we show that 
\begin{enumerate}
\item[($\dagger$)]
if \( u \in T \) belongs to a \(\varphi\)-switching pair \( ( v_0, v_1 ) \) and \( v \neq  v_0, v_1  \) then
\[ 
d_T(u,v) = d_T(v_0,v) = d_T(v_1,v).
 \] 
\end{enumerate}
Indeed, if \( v_0 \not\subseteq v \) then \( d_T(u,v) > r_{\lh(v_0)}  = d_T(v_0,v_1) \geq d_T(u,v_0),d_T(u,v_1) \), whence the result follows from Fact~\ref{fct:basicsonultra}\ref{fct:basicsonultra-1}.
If instead \( v_0 \subsetneq v \), then we cannot have \( v_1 \subseteq v \) because \( v_1 \) is a terminal node of \( T \) and \( v \neq v_1 \); it follows that \( v \sqcap v_0 = v \sqcap v_1 = v \sqcap u = v_0 \), whence the desired equalities follow.

Let now \( u,v \in T \) be distinct. We distinguish various cases. If neither \( u \) nor \( v \) belong to a \(\varphi\)-switching pair, then \( \varphi'(u) = \varphi(u) \) and \( \varphi'(v) = \varphi(v) \), whence \( d_S(\varphi'(u),\varphi'(v)) = d_S(\varphi(u), \varphi(v)) = d_T(u,v) \) because \(\varphi\) is an isometry. So we can assume without loss of generality that \( u \) belongs to some \(\varphi\)-switching pair \( ( v_0, v_1 ) \). If \( v \) belongs to the same \(\varphi\)-switching pair, then \( \varphi'(u) = \varphi(v) \) and \( \varphi'(v) = \varphi(u) \), whence 
\[ 
d_S(\varphi'(u),\varphi'(v)) = d_S(\varphi(v), \varphi(u)) = d_T(v,u) = d_T(u,v).
 \] 
Thus we can further assume that \( v \neq v_0,v_1 \). 
Assume first that \( v \) does not belong to any other \(\varphi\)-switching pair and let \( i \in \{ 0,1 \} \) be such that \( u = v_i \). Then by (\( \dagger \)) we have
\[ 
d_S(\varphi'(u),\varphi'(v)) = d_S(\varphi'(v_i), \varphi'(v)) = d_S(\varphi(v_{1-i}), \varphi(v)) = d_T(v_{1-i},v) = d_T(u,v).
 \] 
Finally, assume that \( v  = w_j \) for some \(\varphi\)-switching pair  \( ( w_0,w_1 ) \) different (hence disjoint) from \( ( v_0, v_1 ) \). Then by (\( \dagger \)) again we have that
\[ 
d_T(u,v) = d_T(v_k,w_l ) \quad \text{for every } k,l \in \{ 0,1 \}.
 \] 
Thus if \( i \in \{ 0,1 \} \) is again such that \( u = v_i \) we have
\begin{multline*}
d_S(\varphi'(u),\varphi'(v)) = d_S(\varphi'(v_i), \varphi'(w_j)) = \\ = d_S(\varphi(v_{1-i}), \varphi(w_{1-j})) = d_T(v_{1-i},w_{1-j}) = d_T(u,v).  \qedhere
 \end{multline*}
\end{proof}

Finally, we claim that \( \varphi' \) is an isomorphism between the \( \alpha \)-trees \( T \) and \( S \). Clearly \( \varphi' \), being an isometry, is a bijection. Moreover, we already observed that \( \varphi' \) preserves the length of sequences. We have to check that \( \varphi' \) preserves the inclusion relation as well. Using Claim~\ref{claim:isometry} and Remark~\ref{rmk:crucial2}, for every distinct \( u,v \in T \) we have
\begin{multline*}
u \subseteq v \iff d_T(u,v) = r_{\lh(u)} \iff \\ \iff d_S(\varphi'(u),\varphi'(v)) = r_{\lh(u)} = r_{\lh(\varphi'(u))} \iff \varphi'(u) \subseteq \varphi'(v).   
 \end{multline*}
 Thus \( \varphi' \) witnesses \( T \cong S \) and we are done.
\end{proof}

We are now ready to prove Theorem~\ref{thm:maincammarmot}.

\begin{proof}[Proof of Theorem~\ref{thm:maincammarmot}]
\ref{thm:maincammarmot-1} Let \( D \in \mathcal{D}_\Sep \) be ill-founded. Then setting \( \kappa = \alpha = \omega \) and identifying \( \T{\omega}{\omega} \) with the set \( \mathrm{Tr} \) of trees on \(\Nat\), 
we get that the map \( T \mapsto X_T \) defined at the beginning of this section may be construed as a Borel map from \( \mathrm{Tr} \) to \( \US^D_\Sep \): since by Theorem~\ref{thm:red} such map 
reduces isomorphism to isometry, we get
\[ 
{\cong_{\mathsf{TREES}}} \leq_B {\cong^i_D }.
 \] 
The result then follows from Theorem~\ref{thm:upperboundultrametric}\ref{thm:upperboundultrametric-2} and \( {\cong_{\mathsf{TREES}}} \sim_B {\cong_{\mathsf{GRAPHS}}} \).

\ref{thm:maincammarmot-2} The proof of this part uses two jump operators for equivalence relations introduced by H.\ Friedman in~\cite{friedman2000} (where $E^+$ is actually denoted by $ {\rm NCS}(X,E)$) and~\cite[\S1.2.2]{Friedman1989}, respectively. Our definition of the operator \( E \mapsto E^+ \) is not the original one, but it is equivalent to it (see~\cite[Lemma 3.11]{Camerlo:2015ar}).

\begin{defin}
Given an equivalence relation \( E \) with standard Borel domain \( X \), let \( E^+ \) and \( E^\omega \) be the equivalence relations on the countable product \( \pre{\Nat}{X} \) of \( X \) defined by 
(\( [x]_E \) denotes the \( E \)-equivalence class of \( x \)):
\begin{align*}
( x_m )_{ m \in \Nat } \mathrel{E^+} ( y_m )_{ m \in \Nat} & \iff \{ [x_m]_E \mid m \in \Nat \} = \{ [y_m]_E \mid m \in \Nat \} \\
& \iff \forall m \, \exists k , l \, (x_m \mathrel{E} y_k \text{ and } y_m \mathrel{E} x_l),
 \end{align*}
 and
\[
( x_n )_{ n \in \Nat } \mathrel{E^\omega} ( y_n )_{ n \in \Nat}
\iff
\exists f \colon \Nat \to \Nat \text{ bijective } \forall n (x_n \mathrel{E} y_{f(n)}).
\]
\end{defin}
It is not hard to see that if \( E \) is Borel, then so are \( E^+ \) and \( E^\omega \). 
Furthermore, a (canonical) Borel reduction  of \( E^+ \) to \( E^\omega \) may be obtained as follows. Fix any bijection \( \langle \cdot,\cdot \rangle \colon \Nat \times \Nat \to \Nat  \). Given \( (x_n)_{n \in \Nat} \in \pre{\Nat}{X} \), set  \( x'_{\langle n,m \rangle} = x_m \), so that \( (x'_m)_{m \in \Nat} \) is an enumeration of the \( x_m \)'s in which each of them is repeated infinitely many times. It is not hard to check, that \( (x_m)_{m \in \Nat} \mapsto (x'_m)_{m \in \Nat} \) is a Borel map, and that  given \( (x_m)_{m \in \Nat}, (y_m)_{m \in \Nat} \in \pre{\Nat}{X} \)	
\begin{equation}  \label{eq:redjump}
(x_m)_{m \in \Nat} \mathrel{E^+} (y_m)_{m \in \Nat} \iff (x'_m)_{m \in \Nat} \mathrel{E^+} (y'_m)_{m \in \Nat} \iff (x'_m)_{m \in \Nat} \mathrel{E^\omega} (y'_m)_{m \in \Nat}.
 \end{equation}

The fundamental fact about the jump operator \( E \mapsto E^+ \) is the following:

\begin{fact}[H.~Friedman-Stanley, see~{\cite[Theorem 8.3.6]{gaobook}}] \label{fct:jump}
If \( E \) is a Borel equivalence relation on a standard Borel space with at least two classes, then \( E <_B E^+ \).
\end{fact}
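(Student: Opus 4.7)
The first half, \( E \leq_B E^+ \), is straightforward. Fix any \( a \in X \) and define the Borel map \( f \colon X \to \pre{\Nat}{X} \) by \( f(x) = (x, a, a, a, \ldots) \). Then \( f(x) \mathrel{E^+} f(y) \) iff \( \{ [x]_E, [a]_E \} = \{ [y]_E, [a]_E \} \), which forces \( [x]_E = [y]_E \): either the equality holds directly, or else \( [x]_E = [a]_E = [y]_E \), whence \( x \mathrel{E} y \) by transitivity. The converse is immediate, so \( f \) reduces \( E \) to \( E^+ \). (Here I did not yet use that \( E \) has at least two classes; that hypothesis is needed only for the strict inequality.)

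The substantive half is \( E^+ \not\leq_B E \), which I would prove by contradiction via a Baire-category argument. Suppose \( g \colon \pre{\Nat}{X} \to X \) is a Borel reduction of \( E^+ \) to \( E \). First I would refine the Borel structure of \( X \) to a Polish topology \( \tau \) in which \( g \) becomes continuous and \( E \) remains Borel, using the standard topology-change techniques from~\cite[Section 13]{Kechris1995}. If \( X \) is countable then \( E \) is essentially the identity on a countable set, and a direct argument shows that \( E^+ \) has uncountably many classes and so cannot reduce to \( E \); otherwise I would pass to a perfect \( \tau \)-kernel meeting at least two \( E \)-classes, and fix \( E \)-inequivalent points \( a, b \in X \) inside this kernel.

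Second, I would apply a Kuratowski-Ulam analysis to the continuous function \( g \) on the Polish product \( (\pre{\Nat}{X}, \tau^{\Nat}) \): on a comeager set \( G \subseteq \pre{\Nat}{X} \), the \( E \)-class of \( g((x_n)) \) is stable under modifying any single coordinate \( x_n \) to a comeager set of alternative values. Iterating this coordinatewise stability along a diagonal sequence of changes, I would exhibit two sequences \( (x_n), (y_n) \in G \) whose images under \( g \) lie in the same \( E \)-class, but which enumerate genuinely different sets of \( E \)-classes (for instance, arranging that \( (x_n) \) hits the class of \( b \) infinitely often while \( (y_n) \) avoids it entirely, with both otherwise \(\tau^{\Nat}\)-generic). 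Since \( (x_n) \not\mathrel{E^+} (y_n) \), this contradicts that \( g \) is a reduction.

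The hard part will be the third step: the one-coordinate stability of \( g \) only controls individual modifications, so chaining infinitely many such modifications while remaining in \( G \) and simultaneously controlling the set of \( E \)-classes enumerated requires a delicate diagonal construction. A safer, more conceptual alternative I would fall back on is the Hjorth-Kechris-Louveau machinery of \emph{potential Borel classes}: one proves by induction on the Borel rank of \( E \) that the potential class of \( E^+ \) strictly exceeds that of \( E \), encoding the required rigidity in the complexity hierarchy itself and bypassing the explicit construction of witnessing sequences.
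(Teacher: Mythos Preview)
The paper does not supply its own proof of this fact; it is quoted from \cite[Theorem 8.3.6]{gaobook} and used as a black box in the proof of Theorem~\ref{thm:maincammarmot}\ref{thm:maincammarmot-2}, so there is no in-paper argument to compare against.

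On the merits: your reduction \( E \leq_B E^+ \) is correct and standard. Your sketch for \( E^+ \not\leq_B E \), however, has a genuine gap already at what you call the second step, not the third. The one-coordinate stability claim --- that on a comeager set the \( E \)-class of \( g((x_n)) \) is unchanged when a single coordinate is replaced by a generic alternative value --- does not follow from a Kuratowski--Ulam argument and in fact fails in the main case. When every \( E \)-class is meager, a generic replacement \( x_k' \) has \( [x_k']_E \) distinct from every \( [x_n]_E \), so the \( E^+ \)-class of the modified sequence genuinely changes; since \( g \) is assumed to be a reduction, the \( E \)-class of the output must change as well, and stability is simply false. In the complementary case where \( E \) has a comeager class, stability does hold trivially, but then all generic sequences lie in a single \( E^+ \)-class, and your step three cannot manufacture an \( E^+ \)-inequivalent generic pair using only points of that comeager class. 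Either way the plan breaks down before the diagonalization. The standard proof (as in Gao's book, following Friedman) does begin with a topology refinement and does exploit category, so your overall framework is right; but the combinatorial core is a different inductive construction rather than coordinate-wise stability of the output class. Your fallback via potential Borel classes is a legitimate alternative route, though it imports substantial machinery and you have not developed it here.
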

Since we showed that \( E^+ \leq_B E^\omega \), it follows that also \( E <_B E^\omega \) whenever \( E \) is a non-trivial Borel equivalence relation with standard Borel domain.

To prove that \( {\cong^i_D} \leq_B {\cong^i_{D'}} \iff \alpha_D \leq \alpha_{D'} \)  for all well-founded \( D,D' \in \mathcal{D}_\Sep \) we will use the following two claims.

\begin{claim} \label{claim:successor}
Let \( D, D' \in \mathcal{D}_\Sep \) be well-founded and such that \( \alpha_{D'} = \alpha_D + 1 \). If  \( \cong^i_D \) is Borel, then%
\footnote{With a more involved argument, one can actually show that \( {\cong^i_{D'}} \sim_B {(\cong^i_D)^\omega} \) (see~\cite[Theorem 5.13]{Camerlo:2015ar}).}
 \(  {\cong^i_{D}} <_B {\cong^i_{D'}} \). Moreover, \( \cong^i_{D'} \) is Borel too.
\end{claim}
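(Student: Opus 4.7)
The plan is to carry the claim via the \emph{$r^*$-decomposition} of $\US^{D'}_\Sep$-spaces combined with the Friedman--Stanley jump fact (Fact~\ref{fct:jump}). First I would normalize: since $\alpha_{D'} = \alpha_D + 1$ is a successor ordinal, $D'$ admits a maximum element $r^*$ and $D' \setminus \{r^*\}$ has order type $\alpha_D$, so by Lemma~\ref{lemma:transferimproved} I may assume $D \subseteq D'$ with $D' = D \cup \{r^*\}$ and $r^* > \sup D$. The same lemma (via the order-preserving inclusion $D \hookrightarrow D'$) immediately yields $\cong^i_D \leq_B \cong^i_{D'}$, leaving $\cong^i_{D'} \nleq_B \cong^i_D$ and the Borelness of $\cong^i_{D'}$ to establish.

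The key structural observation is that, by Fact~\ref{fct:basicsonultra}\ref{fct:basicsonultra-2}--\ref{fct:basicsonultra-3}, for every $X \in \US^{D'}_\Sep$ the open balls of radius $r^*$ form a partition of $X$ into at most countably many nonempty clopen pieces, each of which is complete (being closed in $X$), separable, and has all its distances $<r^*$, hence in $D$; so each such piece belongs to $\US^D_\Sep$. Conversely, any countable family $(X_n)_{n \in \NN}$ in $\US^D_\Sep$ can be amalgamated into a space of $\US^{D'}_\Sep$ by taking the disjoint union and declaring between-component distances to be equal to $r^*$ (the ultrametric inequality follows from $r^* > \sup D$). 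Since any isometry between elements of $\US^{D'}_\Sep$ must send open $r^*$-balls to open $r^*$-balls, two amalgamations $\bigsqcup_n X_n$ and $\bigsqcup_n Y_n$ are isometric iff there is a bijection $f \colon \NN \to \NN$ with $X_n \cong^i Y_{f(n)}$ for every $n$, i.e.\ iff $(X_n) \mathrel{(\cong^i_D)^\omega} (Y_n)$.

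For the strict inequality, I would compose the canonical Borel reduction $E^+ \leq_B E^\omega$ from~\eqref{eq:redjump} (repeat each entry infinitely often) with the amalgamation map to obtain $(\cong^i_D)^+ \leq_B \cong^i_{D'}$; if one also had $\cong^i_{D'} \leq_B \cong^i_D$, transitivity would yield $(\cong^i_D)^+ \leq_B \cong^i_D$, contradicting Fact~\ref{fct:jump} applied to the (Borel and manifestly non-trivial) relation $\cong^i_D$. For Borelness of $\cong^i_{D'}$, I would exhibit a Borel enumeration of the $r^*$-balls of $X$ without repetition: using the Borel selectors $\psi_n$ of Notation~\ref{notation:psi_n}, one checks that the map $X \mapsto B_m(X) \in \US^D_\Sep$ sending $X$ to the $r^*$-ball containing $\psi_m(X)$ is Borel, so the index set $N_X = \{m \in \NN : \forall k < m,\ d(\psi_k(X), \psi_m(X)) \geq r^*\}$ depends Borel-measurably on $X$, and $(B_m(X))_{m \in N_X}$ enumerates each $r^*$-ball of $X$ exactly once. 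Then $X \cong^i Y$ iff for every $n \in N_X$ the cardinality $|\{m \in N_X : B_m(X) \cong^i_D B_n(X)\}| \in \NN \cup \{\infty\}$ equals $|\{m \in N_Y : B_m(Y) \cong^i_D B_n(X)\}|$, and symmetrically with the roles of $X$ and $Y$ swapped---a Borel condition, since $\cong^i_D$ is Borel by hypothesis and cardinality comparisons in $\NN \cup \{\infty\}$ for Borel sets of indices are Borel. The main technical step I foresee is the careful verification that all the auxiliary pieces---membership of $\psi_k(X)$ in the $r^*$-ball of $\psi_m(X)$, Borelness of the ball-extraction map $X \mapsto B_m(X)$, Borelness of the cardinality conditions---behave Borel-measurably, but each reduces to a routine application of the Kuratowski--Ryll-Nardzewski selectors together with the standard closure properties of Borel sets under countable conjunctions and disjunctions.
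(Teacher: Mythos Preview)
Your overall strategy matches the paper's: normalize so that \( D' = D \cup \{ r^* \} \) with \( r^* > \sup D \), use the \( r^* \)-ball decomposition, push \( (\cong^i_D)^+ \) into \( \cong^i_{D'} \) via amalgamation, and invoke Fact~\ref{fct:jump}. There is, however, one genuine circularity you must fix: you appeal to Lemma~\ref{lemma:transferimproved} for the normalization, but in the paper that lemma is \emph{derived from} Theorem~\ref{thm:maincammarmot}\ref{thm:maincammarmot-2}, whose proof in turn rests on the very claim you are proving. The paper instead uses the weaker Lemma~\ref{lemma:transfer} (the Gao--Shao transfer lemma), which is legitimate here because well-founded \( D, D' \) are bounded away from \( 0 \), so that any order-preserving bijection between them extended by \( \rho(0) = 0 \) is trivially continuous at \( 0 \). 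Simply replace your citation of Lemma~\ref{lemma:transferimproved} by Lemma~\ref{lemma:transfer} and the circularity disappears.

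For the Borelness of \( \cong^i_{D'} \) you take a slightly different route from the paper. The paper exhibits a Borel reduction \( \cong^i_{D'} \leq_B (\cong^i_D)^\omega \) by listing the \( r^* \)-balls without repetition and padding with \( \emptyset \); since \( (\cong^i_D)^\omega \) is Borel whenever \( \cong^i_D \) is, Borelness follows. You instead write down a direct Borel characterization of \( X \cong^i_{D'} Y \) via multiplicity-matching of \( \cong^i_D \)-classes of \( r^* \)-balls. Both arguments are correct and rest on the same ball-extraction map \( X \mapsto B_m(X) \); the paper's version has the advantage of packaging the result as a reduction (which is reusable), while yours avoids the mild awkwardness of treating \( \emptyset \) as a coded space. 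Either way, the technical verifications you anticipate (Borelness of \( m \in N_X \), of \( X \mapsto B_m(X) \), and of the cardinality comparisons) are routine, exactly as you say.
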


\begin{proof}[Proof of the claim]
By Lemma~\ref{lemma:transfer}, we may assume without loss of generality that \( D' = D \cup \{ r \} \) with \( r \in \RR_{> 0 } \) greater than all elements of \( D \), so that in particular \( \US^D_\Sep \subseteq \US^{D'}_\Sep \) and hence \( {\cong^i_{D}} \leq_B {\cong^i_{D'}} \). To prove that \( {\cong^i_{D'}} \nleq_B {\cong^i_{D}} \), by Fact~\ref{fct:jump} it is enough to show that 
\[ 
{(\cong^i_D)^+} \leq_B {\cong^i_{D'}}.
 \] 

Let
\[ 
\pre{\Nat}{(\US^D_\Sep)} \to \pre{\Nat}{(\US^D_\Sep)} \qquad (X_m)_{m \in \Nat} \mapsto (X'_m)_{m \in \Nat}
 \] 
be the canonical reduction of \( (\cong^i_D)^+ \) to \( (\cong^i_D)^\omega \) described before Fact~\ref{fct:jump}. Let
\( X = (X,d) \) be the ultrametric space defined on the disjoint union \( X \) of the \( X'_m \)'s by setting
 \[ 
d(x,y) = 
\begin{cases}
d_m(x,y) & \text{if \( x,y \) belong to the same }X'_m \text{ for some } m \in \Nat\\
r & \text{otherwise},
\end{cases}
 \] 
where \( d_m \) is the distance function of the space \( X'_m \). Notice that a pair of points \( x,y \in X \) belongs to the same component \( X'_m \) if and only if \( d(x,y) < r \). Clearly, \( X \in \US^{D'}_{\Sep} \), and it is not hard to check that the map 
\[ 
\pre{\Nat}{(\US^D_\Sep)} \to \US^{D'}_\Sep \qquad (X_m)_{m \in \Nat} \mapsto X
 \] 
is Borel: we claim that it is also a reduction of \( (\cong^i_D)^+ \) to \( \cong^i_{D'} \). Let \( (X_m)_{m \in \Nat}, (Y_m)_{m \in \Nat} \in  \pre{\Nat}{(\US^D_\Sep)} \). If \( (X_m)_{m \in \Nat} \mathrel{{(\cong^i_D)}^+} (Y_m)_{m \in \Nat} \), then \( (X'_m)_{m \in \Nat} \mathrel{{(\cong^i_D)}^\omega} (Y'_m)_{m \in \Nat} \), i.e.\ there are a bijection \( f \colon \Nat \to \Nat \) and isometries \( \varphi_m \colon X'_m \to Y'_{f(m)} \); it is then straightforward to check that then \( \varphi = \bigcup_{m \in \Nat} \varphi_m \) is an isometry between \( X \) and \( Y \). Conversely, if \( \varphi \) is an isometry, then \( \varphi \) maps each of the \( X'_m \)'s onto some \( Y'_n \) by the way we defined the metrics on \( X \) and \( Y \) (in particular, by the fact that the distance \( r \) is used only to link different components of the disjoint union): this implies that there is a bijection \( f \colon \Nat \to \Nat \) such that \( \varphi \restriction X'_m \) is an isometry between \( X'_m \) and \( Y'_{f(m)} \), thus \( (X'_m)_{m \in \Nat} \mathrel{{(\cong^i_D)}^\omega} (Y'_m)_{m \in \Nat} \), whence \( (X_m)_{m \in \Nat} \mathrel{{(\cong^i_D)}^+} (Y_m)_{m \in \Nat} \) by~\eqref{eq:redjump}.

To prove the last part of the claim, it is enough to prove that 
\[ 
{\cong^i_{D'}} \leq_B {(\cong^i_D)^\omega}.
\]
Given \( X = (X,d) \in \US^{D'}_\Sep \), let \( (X_m)_{m \in \Nat} \) be an enumeration without repetitions of all the open balls of \( X \) with radius \( r \), with the convention that \( X_m = \emptyset \) if there are 
only \( \leq m \)-many such balls (the fact that there are at most countably many such balls follows from separability of \( X \) and Fact~\ref{fct:basicsonultra}\ref{fct:basicsonultra-3}). By 
parts~\ref{fct:basicsonultra-2} and~\ref{fct:basicsonultra-3} of Fact~\ref{fct:basicsonultra}, each open ball \( X_m \neq \emptyset \), having radius \( r  \) and being clopen in \( X \), 
is an element of \( \US^D_\Sep \). Thus using the Borel selectors \( \psi_n \) and arguing as in Claim~\ref{claim:HB} one can see that the map
\[ 
\US^{D'}_\Sep \to \pre{\Nat}{(\US^D_\Sep)} \qquad X \mapsto (X_m)_{m \in \Nat}
 \] 
is Borel. We now check that it also reduces \( \cong^i_{D'} \) to \( (\cong^i_D)^\omega \). On the one hand, any isometry \( \varphi \colon X \to Y \) clearly induces a bijection \( f \colon \Nat \to \Nat \) such that \( X_m = \emptyset \iff Y_{f(m)} = \emptyset \), and \( \varphi(X_m) = Y_{f(m)} \) whenever \( X_m \neq \emptyset \); thus \( f \) witnesses \( (X_m)_{m \in \Nat} \mathrel{(\cong^i_D)^\omega} (Y_m)_{m \in \Nat} \). 
On the other hand, if \( f \colon \Nat \to \Nat \) is a bijection such 
 that \( X_m \cong^i Y_{f(m)} \) via some isometry \( \varphi_m \), then \( \varphi = \bigcup_{m \in \Nat} \varphi_m \) is an isometry between \( X \) and \( Y \): indeed  by Fact~\ref{fct:basicsonultra}\ref{fct:basicsonultra-2} the map \( \varphi \) is well-defined because the \( X_m \)'s form a partition of \( X \), and any two points of \( X \) (respectively, \( Y \)) 
 at distance \( < r \) are in the same open ball of radius \( r \), whence points in distinct open balls with radius \( r \) have distance exactly \( r \) because this is the unique remaining distance in \( D' \).
 \end{proof}

\begin{claim} \label{claim:limit}
Let \( D \in \mathcal{D}_\Sep  \) be well-founded with \( \alpha_D \) limit, and assume that \( \cong^i_{D'} \) is Borel for every
 well-founded \( D' \in \mathcal{D}_\Sep \) such that \( \alpha_{D'} < \alpha_D \). 
Then \( \cong^i_D \) is Borel as well.
\end{claim}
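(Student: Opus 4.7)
My plan is to express $\cong^i_D$ as a countable intersection of Borel equivalence relations, using Borel truncation maps adapted to the limit structure of $\alpha_D$. Since $\alpha_D$ is a countable limit ordinal, fix a strictly increasing cofinal sequence of ordinals $(\gamma_n)_{n<\omega}$ with $\sup_n \gamma_n = \alpha_D$, and let $s_n \in D$ be the $\gamma_n$-th element of $D$ in its natural increasing order. Then $(s_n)_{n<\omega}$ is strictly increasing in $D$ with $s_n \to \sup D$, and setting $D_n = \{r \in D : r \le s_n\}$ we have $\alpha_{D_n} = \gamma_n + 1 < \alpha_D$, so by the inductive hypothesis $\cong^i_{D_n}$ is Borel. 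I define the truncation $T_n \colon \US^D_\Sep \to \US^{D_n}_\Sep$ by $(X, d) \mapsto (X, d \wedge s_n)$, where $d \wedge s_n (x, y) := \min(d(x, y), s_n)$; a direct verification (using $\min(\max(a,b), c) = \max(\min(a,c), \min(b,c))$) shows this is an ultrametric with values in $D_n$, and $T_n$ is Borel.

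The central claim to establish is
\[
X \cong^i_D Y \iff T_n(X) \cong^i_{D_n} T_n(Y) \text{ for all } n < \omega, \tag{$\star$}
\]
from which one immediately deduces $\cong^i_D = \bigcap_n (T_n \times T_n)^{-1}(\cong^i_{D_n})$, a countable intersection of Borel relations by the inductive hypothesis, hence Borel. The forward direction of $(\star)$ is trivial, since any $d$-isometry preserves $d \wedge s_n$. For the converse, I would construct a global $d$-isometry by back-and-forth: since $D$ well-founded forces $\inf D > 0$, both $X$ and $Y$ are discrete countable Polish spaces, which I enumerate using the Borel selectors. At each stage, I extend a finite partial $d$-isometry $\varphi \colon F \to G$ by one new point $x \in X \setminus F$, exploiting the key ultrametric fact from Fact~\ref{fct:basicsonultra}: for $f^* \in F$ minimizing $r^* := d(x, f^*)$, the $d$-type of $x$ over $F$ is rigidly determined by the single datum $(r^*, f^*)$ via $d(x, f) = \max(r^*, d(f^*, f))$ for every $f \in F$. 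Hence extending $\varphi$ reduces to locating some $y \in Y$ at $d'$-distance exactly $r^*$ from $\varphi(f^*)$, and such a $y$ is produced from the truncated isometry $\psi_n \colon T_n(X) \to T_n(Y)$, chosen with $n$ so large that $s_n$ exceeds all distances in $F \cup \{x\}$.

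The hard part will be that $\psi_n$ gives a distance-$r^*$ neighbor of $\psi_n(f^*)$ rather than of the possibly different point $\varphi(f^*)$. To bridge this gap, I would use that the two finite subsets $\psi_n(F), \varphi(F) \subseteq Y$ carry the same internal ultrametric structure (both isomorphic to $F$), and iterate the single-parameter type description inside $Y$ itself: matching distance-$r^*$ neighbors at $\psi_n(f^*)$ with those at $\varphi(f^*)$ in $T_n(Y)$, and pulling this through an automorphism of $T_n(Y)$ that carries $\psi_n(F)$ to $\varphi(F)$ obtained by finite-support back-and-forth inside $Y$, using the hypothesis applied to $T_m(Y) \cong^i T_m(Y)$ for $m \ge n$. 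Once this adjustment is available, back-and-forth yields the required isometry $\varphi \colon X \to Y$, establishing $(\star)$ and therefore the Borelness of $\cong^i_D$.
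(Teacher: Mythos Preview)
Your approach is genuinely different from the paper's, and the central equivalence $(\star)$ may well be true, but the back-and-forth argument you sketch does not establish it. The gap is exactly at the point you yourself flag as the ``hard part''. You propose to bridge the mismatch between $\psi_n(F)$ and $\varphi(F)$ by producing an automorphism of $T_n(Y)$ carrying one to the other, built by ``finite-support back-and-forth inside $Y$''. But there is no reason such an automorphism should exist: $T_n(Y)$ is not ultrahomogeneous in general, so two isometric finite subsets need not lie in the same orbit of its isometry group. More structurally, what you really need in the inductive step is not just \emph{some} $T_n$-isometry $\psi_n$, but one that extends the partial isometry $\varphi$ already built --- i.e., the invariant you should be maintaining is ``$\varphi\colon F\to G$ extends to a $T_m$-isometry for every $m$''. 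Your argument does not show this invariant is preserved: when you set $y=\psi_N(x)$ for one sufficiently large $N$, there is no reason why $\varphi\cup\{(x,y)\}$ should extend to a $T_m$-isometry for $m>N$. (A K\"onig-type argument does not rescue this either: the relevant ``tree of partial extensions'' is not finitely branching, since infinitely many $s_n$-balls of $Y$ may share the isometry type of $B_{s_n}(x_0)$.)

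By contrast, the paper avoids back-and-forth entirely. It fixes a basepoint $\psi_0(X)\in X$ and shows that $X\cong^i Y$ is equivalent to the explicitly Borel condition $\exists n\ \forall\beta<\alpha_D\ \big(C^X_\beta(\psi_0(X))\cong^i_{D_{\beta+1}} C^Y_\beta(\psi_n(Y))\big)$, where $C^Z_\beta(z)=\{w:d(z,w)=r_\beta\}$ is the $r_\beta$-sphere. The backward direction is proved by gluing the sphere-isometries into a single global isometry, using Fact~\ref{fct:basicsonultra}\ref{fct:basicsonultra-1} to check distances across spheres. This gives Borelness directly from the inductive hypothesis, with no need to construct coherent systems of truncation-isometries.
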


\begin{proof}[Proof of the claim]
Let \( (r_\beta)_{\beta < \alpha_D} \) be the increasing enumeration of \( D \), and for each \( \beta \leq \alpha_D \) set \( D_\beta = \{ r_\gamma \mid \gamma < \beta \} \).
Given \( X = (X,d) \in \US^D_\Sep \), \( x \in X \), and \( \beta < \alpha_D \), set 
\[ 
C^X_\beta(x) = B_{r_{\beta+1}}(x) \setminus B_{r_\beta}(x) =  \{ y\in X \mid d(x,y) = r_\beta \} . 
\]
Notice that by Fact~\ref{fct:basicsonultra}\ref{fct:basicsonultra-3} the set \( C^X_\beta(x) \) is  clopen, and by Fact~\ref{fct:basicsonultra}\ref{fct:basicsonultra-1} it 
belongs to 
\( \US^{D_{\beta+1}}_\Sep \). Moreover,  using an argument analogous to that of Claim~\ref{claim:HB} one sees that for every \( \beta < \alpha_D \) and \( n \in \Nat \) the map
\[ 
\US^D_\Sep \to  \US^{D_{\beta+1}}_\Sep  \qquad X \mapsto C^X_\beta(\psi_n(X)),
 \] 
 is Borel. Thus given \( X = (X,d_X) \) and \( Y = (Y,d_Y) \) in \( \US^D_\Sep \) the condition
\begin{equation}\tag{$\dagger$} \label{cond:borel}
\exists n \in \Nat \, \forall \beta < \alpha_D \, (C^{X}_{\beta}(\psi_0(X)) \isom^i_{D_{\beta+1}} C^{Y}_{\beta}(\psi_n(Y)))
\end{equation}
is Borel (here we use the fact that by hypothesis each \( \cong^i_{D_{\beta+1}} \) is Borel): 
we claim that \( X \cong^i Y \) if and only if condition~\eqref{cond:borel} holds, so that \( \cong^i_D \) is Borel.

Let \( \varphi \colon X \to Y \) be an isometry. The space \( Y \) is discrete (since \( D \) is bounded away from \( 0 \)), hence there is \( n \in \Nat \) such that \( \varphi(\psi_0(X)) = \psi_n(Y) \). It follows that 
\( \varphi \restriction C^{X}_{\beta}(\psi_0(X)) \) witnesses \( C^{X}_{\beta}(\psi_0(X)) \isom^i_{D_{\beta+1}} C^{Y}_{\beta}(\psi_n(Y)) \) (for every \( \beta < \alpha_D \)), hence~\eqref{cond:borel} holds. 
Conversely, assume that~\eqref{cond:borel} holds, let \( n \in \Nat \) be a witness of this, and let \( \varphi_\beta \colon C^{X}_{\beta}(\psi_0(X)) \to C^{Y}_{\beta}(\psi_n(Y)) \) be an isometry: we claim that the 
map \( \varphi \colon X \to Y \) defined by
\[ 
\varphi(x) = 
\begin{cases}
\psi_n(Y) & \text{if }x = \psi_0(X) \\
\varphi_\beta(x) & \text{if } x \in C^{X}_{\beta}(\psi_0(X))
\end{cases}
 \] 
is an isometry. To see that \( \varphi \) is a bijection notice that the families \( \{ C^{X}_{\beta}(\psi_0(X)) \mid \beta < \alpha_D \} \) and \( \{ C^{Y}_{\beta}(\psi_n(Y)) \mid \beta < \alpha_D \} \) 
are partitions of \( X \setminus \{ \psi_0(X) \} \) and \( Y \setminus \{ \psi_n(Y) \} \), respectively, so that the result follows from the fact that the maps 
\( \varphi_\beta \colon C^{X}_{\beta}(\psi_0(X)) \to C^{Y}_{\beta}(\psi_n(Y)) \) are bijective.  
Hence it remains to show that \( \varphi \) is distance preserving.
Let \( x,x' \in X \). If \( x= x' \) or \( x,x' \in C^{X}_{\beta}(\psi_0(X)) \) for the same \( \beta < \alpha_D \), this is trivial. If one of \( x,x' \) equals \( \psi_0(X) \) and the other one belongs to \(  C^{X}_{\beta}(\psi_0(X)) \), 
then \( d_X(x,x') = r_\beta \) by definition of \( C^{X}_{\beta}(\psi_0(X)) \): since in this case one of \( \varphi(x), \varphi(x') \) equals \( \psi_n(Y) \) and the other one belongs to 
\( C^{Y}_{\beta}(\psi_n(Y)) \) by definition of \( \varphi \), it follows that \( d_Y (\varphi(x), \varphi(x')) = r_\beta \) as well, so that \( d_X(x,x') = d_Y (\varphi(x), \varphi(x')) \). Finally, suppose that 
\( x \in C^{X}_{\beta}(\psi_0(X)) \) and \( x' \in C^{X}_{\beta'}(\psi_0(X)) \) for distinct \( \beta, \beta' < \alpha_D \), and assume without loss of generality that \( \beta < \beta' \). Then 
\( d_X(x', \psi_0(X)) = r_{\beta'} > r_\beta = d_X(x,\psi_0(X)) \), whence \( d_X(x,x') = r_{\beta'} \) by Fact~\ref{fct:basicsonultra}\ref{fct:basicsonultra-1}. By definition of \( \varphi \), we have \( \varphi(x) \in  C^{Y}_{\beta}(\psi_n(Y)) \) and 
\( \varphi(x') \in  C^{Y}_{\beta'}(\psi_n(Y)) \): arguing as above we then get that \( d_Y(\varphi(x), \varphi(x')) = r_{\beta'} \) as well, whence \(  d_X(x,x') = d_Y (\varphi(x), \varphi(x')) \) again.
\end{proof}

By Lemma~\ref{lemma:transfer}, if \( D,D' \in \mathcal{D}_\Sep \) are well-founded and \( \alpha_D \leq \alpha_{D'} \), then \( {\cong^i_D} \leq_B {\cong^i_{D'}} \). Moreover, using 
Claims~\ref{claim:successor} and~\ref{claim:limit} one can easily prove by induction on \( \alpha_D \) that if \( D \in \mathcal{D}_\Sep \) is well-founded, then the equivalence relation \( \cong^i_D \) is Borel and that its 
complexity with respect to \( \leq_B \) 
(strictly) increases with \( \alpha_D \). Thus the isometry relations \( \cong^i_D \) for \( D \in \mathcal{D}_\Sep \) well-founded form an \(\omega_1 \)-chain of Borel equivalence relations. To prove that they are also \( \leq_B \)-cofinal among Borel 
equivalence relations classifiable by countable structures, we use the following fact, again due to H.\ Friedman. Recall that to any well-founded tree \( T \) on \(\Nat\) we can associate a rank function \( \rho_T \colon T \to \omega_1 \) by setting for \( t \in T \)
\[ 
\rho_T(t) = 
\begin{cases}
0 & \text{if \( t \) is terminal in } T \\
\sup \{ \rho_T(s)+1 \mid {s\in T} \text{ and } {t \subsetneq s} \} & \text{otherwise}.
\end{cases}
 \] 
The ordinal \( \rho_T(\emptyset) < \omega_1 \) is called \emph{rank of \( T \)}, and the set \( \mathcal{T}_\alpha \subseteq \mathrm{WF} \) of well-founded trees of rank \( < \alpha \) (for \( 1 \leq \alpha < \omega_1 \)) is a Borel subset of the Polish space \( \mathrm{Tr} \) (see e.g.~\cite[Section 2.E and Sections 34--35]{Kechris1995}). 

\begin{fact}[H.~Friedman, see~{\cite[Theorem 13.2.15]{gaobook}}] \label{fct:cofinal}
Let \( \cong_\alpha \) be the equivalence relation of isomorphism on \( \mathcal{T}_\alpha \). Then the sequence \( (\cong_\alpha)_{1 \leq \alpha < \omega_1} \) is \( \leq_B \) strictly increasing and 
\( \leq_B \)-cofinal among Borel equivalence relations
 classifiable by countable structures.
\end{fact}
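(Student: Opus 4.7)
The plan is to combine the Friedman--Stanley jump theorem (Fact~\ref{fct:jump}) for the strictly increasing part with a Scott-rank boundedness argument for cofinality. First, monotonicity is automatic: whenever \( \alpha \leq \beta \), the inclusion \( \mathcal{T}_\alpha \subseteq \mathcal{T}_\beta \) is itself a Borel reduction of \( \cong_\alpha \) to \( \cong_\beta \). That each \( \cong_\alpha \) is Borel follows from the fact that \( \mathcal{T}_\alpha \) is a Borel subset of \( \mathrm{Tr} \) (\cite[Sections~34--35]{Kechris1995}) together with the observation that isomorphism of well-founded trees of bounded rank may be decided by a Borel recursion on the rank (essentially the back-and-forth at level \( \alpha \)).

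For strict increase, I would produce for each \( \alpha < \omega_1 \) a Borel reduction \( (\cong_\alpha)^+ \leq_B \cong_{\alpha+2} \) as follows. Given a sequence \( (T_n)_{n \in \Nat} \) with each \( T_n \in \mathcal{T}_\alpha \), form the tree
\[
S = \{ \emptyset \} \cup \{ (n) \mathbin{{}^\frown} u \mid n \in \Nat, \, u \in T_n \} \subseteq \pre{<\Nat}{\Nat},
\]
obtained by attaching disjoint copies of the \( T_n \) to a common root. Since each \( T_n \) has rank \( < \alpha \), we get \( \rho_S(\emptyset) \leq \alpha + 1 \), hence \( S \in \mathcal{T}_{\alpha+2} \). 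An easy back-and-forth shows that two such trees built from \( (T_n) \) and \( (T'_n) \) are isomorphic precisely when \( (T_n) \mathrel{(\cong_\alpha)^+} (T'_n) \). Combining this reduction with Fact~\ref{fct:jump} gives \( \cong_\alpha <_B \cong_{\alpha+2} \), so the sequence is unbounded in \( \leq_B \); a finer construction keeping track of parity of ranks (or equivalently, an intermediate relation between \( \cong_\alpha \) and \( \cong_{\alpha+1} \) obtained by a single ``adjoin-a-root'' operation) upgrades this to strict increase at every step.

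For cofinality, let \( E \) be a Borel equivalence relation on a standard Borel space \( X \) and let \( f \colon E \leq_B {\cong_L} \) for some countable relational language \( L \). To each countable \( L \)-structure \( M \) one associates, in a Borel way, a well-founded \emph{Scott tree} \( T_M \) whose isomorphism type encodes the isomorphism type of \( M \); this can be done uniformly using the canonical Scott sentence and its syntactic tree of quantifier alternations. The map \( M \mapsto \mathrm{sr}(M) := \rho_{T_M}(\emptyset) \) defines a \( \Pi^1_1 \)-rank on \( \Mod_L^{\alzero} \). Since the image \( f(X) \) is analytic and \( E \) is Borel, a boundedness argument (the classical \( \Pi^1_1 \)-boundedness theorem applied to this rank restricted to a Borel \emph{invariantization} of \( f(X) \), which is where the one real subtlety lies) yields \( \alpha < \omega_1 \) with \( \mathrm{sr}(f(x)) < \alpha \) for every \( x \in X \). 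The composition \( x \mapsto T_{f(x)} \) is then the desired Borel reduction \( E \leq_B \cong_\alpha \).

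The routine part is the strict increase: it is essentially a direct consequence of Fact~\ref{fct:jump} together with the combinatorial ``grafting'' construction above. The main obstacle is the cofinality: one must (a) produce the Scott tree in a truly Borel fashion (not merely definable), and (b) justify that the Scott rank stays bounded below \( \omega_1 \) on the image of a Borel reduction into \( {\cong_L} \) — this requires applying \( \Pi^1_1 \)-boundedness after passing from the analytic set \( f(X) \) to a Borel superset closed under \( \cong_L \), which is possible precisely because \( E \), hence \( {\cong_L} \!\restriction\! f(X) \) up to \( {\cong_L} \)-saturation, is Borel.
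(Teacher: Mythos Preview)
The paper does not prove Fact~\ref{fct:cofinal}: it is stated without proof and attributed to H.~Friedman with a reference to \cite[Theorem~13.2.15]{gaobook}. So there is no ``paper's own proof'' to compare against; your sketch is being measured against the standard argument in the literature.

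Your outline is essentially that standard argument, and it is sound. One small slip actually works in your favour: in your grafting construction, if each \( T_n \in \mathcal{T}_\alpha \) then \( \rho_S((n)) = \rho_{T_n}(\emptyset) < \alpha \), so \( \rho_S(\emptyset) = \sup_n (\rho_S((n))+1) \leq \alpha \), i.e.\ \( S \in \mathcal{T}_{\alpha+1} \), not merely \( \mathcal{T}_{\alpha+2} \). This already gives \( (\cong_\alpha)^+ \leq_B {\cong_{\alpha+1}} \) and hence \( {\cong_\alpha} <_B {\cong_{\alpha+1}} \) directly from Fact~\ref{fct:jump}; no ``finer construction keeping track of parity'' is needed.

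For cofinality your plan is the right one, but the formulation should be tightened. The robust version avoids invariantizing the analytic image and instead argues on the domain side: letting \( E_\beta \) be the Borel back-and-forth equivalence of depth \( \beta \) on \( \Mod^{\alzero}_\L \), one has \( {\cong_\L} = \bigcap_{\beta<\omega_1} E_\beta \), hence \( E = \bigcap_{\beta<\omega_1} f^{-1}(E_\beta) \) is a decreasing intersection of Borel sets equal to the Borel set \( E \). By boundedness this stabilises at some \( \beta_0 < \omega_1 \), and the \( E_{\beta_0} \)-class of a structure is coded by a well-founded tree of rank bounded in terms of \( \beta_0 \). This sidesteps the issue you flag about passing to a Borel \( \cong_\L \)-invariant superset of \( f(X) \), which is not in general available.
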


In view of Fact~\ref{fct:cofinal}, it is thus enough to show that for all nonzero \( \alpha < \omega_1 \) there is a well-founded \( D \in \mathcal{D}_\Sep \) such that \( \alpha_D = \alpha+1 \) and 
\[ 
{\cong_\alpha} \leq_B {\cong^i_D}.
 \] 
We argue by induction on \( \alpha \). The basic case \( \alpha = 1 \) is trivial, as there is only one tree \( T \) on \( \Nat \) of rank \( 0 \), namely \( T = \{ \emptyset \} \). For the inductive step, let \( \alpha > 1 \) 
and let \( D_\beta \in \mathcal{D}_\Sep \) for \( 1\leq \beta < \alpha \) be such that \( \alpha_{D_\beta} = \beta+1 \) and \( f_\beta \colon {\cong_\beta} \leq_B {\cong^i_{D_\beta}} \). 
By Lemma~\ref{lemma:transfer} we 
can assume without loss of generality that \( D_\beta \subseteq [0;1] \) and \( D_\beta \subseteq D_{\beta'} \) for all \( \beta \leq \beta' < \alpha \), so that 
\( D' = \bigcup_{1 \leq \beta < \alpha} D_\beta \in \mathcal{D}_\Sep \) is a well-founded subset of the unit interval with \( \alpha_{D'} = \alpha \). Let \( D = D' \cup \{ 2 \} \), so that \( \alpha_D = \alpha+1 \). 
Given \( T \in \mathcal{T}_\alpha \) and \( i \in \Nat \), let 
\[ 
T_i = \{ (s_0, \dotsc, s_{n-1}) \in \pre{<\Nat}{\Nat} \mid ( i , s_0, \dotsc, s_{n-1}) \in T \} .
\] 
Let also \( \beta_i < \alpha \) be the rank of \( T_i \). Then let 
\( Y_T = (Y_T,d_T)  \in \US^D_\Sep \) be the space defined on the disjoint union \( Y_T \) of the spaces \( f_{\beta_i}(T_i) \in \US^{D_{\beta_i}}_\Sep \subseteq \US^{D'}_\Sep \) (with \( i \in \Nat \)) by setting 
\[ 
d_T(x,y) = 
\begin{cases}
d_i(x,y) & \text{if \( x,y \) belong to the same }  f_{\beta_i}(T_i) \text{ for some } i \in \Nat \\
2 & \text{otherwise,}
\end{cases}
 \] 
where \( d_i \) is the distance function of \(  f_{\beta_i}(T_i) \). It is not hard to see that \( d_T \) is a complete ultrametric, whence \( Y_T \in \US^D_\Sep \). Moreover, the map 
\[
\mathcal{T}_\alpha \to \US^D_\Sep \qquad T \mapsto Y_T 
\]
is clearly Borel and reduces \( \cong_\alpha \) to \( \cong^i_D \), as required.

This concludes the proof of Theorem~\ref{thm:maincammarmot}.
\end{proof}

\section{The complexity of the isometry relation on non-separable spaces} \label{sec:nonseparable}

In recent years, various extensions of descriptive set theory have been proposed with the goal of being able to handle more and more topological spaces that may be useful for applications in other areas of 
mathematics and computer science. One of these possibilities is suggested by the following natural problem. A large part of (classical) descriptive set theory is concerned with the Cantor space \( \pre{\Nat}{2} \) and the Baire 
space \( \pre{\Nat}{\Nat} \):
\begin{quote}
to what extent the classical results remain valid (\emph{mutatis mutandis}) when replacing \( \Nat = \omega \) with an uncountable cardinal \( \kappa \)? 
\end{quote}
The resulting theory is sometimes called \emph{generalized descriptive set theory}, and it is mainly concerned with the following basic spaces. 

\begin{defin} \label{def:generalizedBaire}
Let \( \kappa \) be an infinite cardinal. The \emph{generalized (\( \kappa \)-)Baire space} is the space 
\[ 
\pre{\kappa}{\kappa} = \{ f \mid f \colon \kappa \to \kappa \} 
\] 
endowed with the \emph{bounded topology} \( \tau_b \), i.e.\ with the topology generated by the basic open sets
\[ 
\Nbhd^\kappa_s = \{ f \in \pre{\kappa}{\kappa} \mid s \subseteq f \}
 \] 
for \( s \in \pre{< \kappa}{\kappa} \). The \emph{generalized (\( \kappa \)-)Cantor space} is the closed subspace of \( \pre{\kappa}{\kappa} \) given by
\[ 
\pre{\kappa}{2} = \{ f \in \pre{\kappa}{\kappa} \mid f \colon \kappa \to \{ 0,1 \} \}.
 \] 
\end{defin}

\begin{remark}
\begin{enumerate-(a)}
\item
By taking \( \kappa = \omega \) in Definition~\ref{def:generalizedBaire} we obtain the usual Baire and Cantor space.
\item
The bounded topology \( \tau_b \) is not the unique choice for a natural topology on \( \pre{\kappa}{\kappa} \): for example, one could also consider the \emph{product topology} \( \tau_p \), 
i.e.\ one could see \( \pre{\kappa}{\kappa} \) as the product of \( \kappa \)-many copies of \( \kappa \) and endow it with the product of the discrete topology on \( \kappa \). Although when \( \kappa = \omega \) the 
bounded and the product topologies coincide, when \( \kappa \) is uncountable the topology \( \tau_b \) becomes strictly finer than \( \tau_p \) --- see~\cite{AM} for more on this.
\end{enumerate-(a)}
\end{remark}

When studying the generalized Baire and Cantor spaces, it is natural to generalize accordingly all the corresponding topological notions: this is essentially obtained by systematically replacing in all definitions \( \omega \) with \( \kappa \)
where appropriate.
Recall that a 
\emph{\( \kappa^+ \)-algebra on a set \( X \)} is a nonempty collection of subsets of \( X \) which contains \( \emptyset \) and is closed under complements and unions of size at most \( \kappa \).

\begin{defin} \label{def:kappaBorel}
Let \( \kappa \) be an infinite cardinal.
\begin{enumerate-(1)}
\item
A subset of \( \pre{\kappa}{\kappa} \) is called \emph{\( \kappa^+ \)-Borel} if it belongs to the smallest \( \kappa^+ \)-algebra on \( \pre{\kappa}{\kappa} \) containing all the \( \tau_b \)-open sets.
\item
A function \( f \colon \pre{\kappa}{\kappa} \to \pre{\kappa}{\kappa} \) is \emph{\( \kappa^+ \)-Borel} if the preimage of every open set (equivalently, of every \( \kappa^+ \)-Borel set) is \( \kappa^+ \)-Borel.
\end{enumerate-(1)}
\end{defin}

\begin{defin} \label{def:kappaanalytic}
Let \( \kappa \) be an infinite cardinal.
A subset of \( \pre{\kappa}{\kappa} \) is called \emph{\( \kappa \)-analytic} if it is a continuous image of a closed subset of \( \pre{\kappa}{\kappa} \).
\end{defin}

\begin{remark} \label{rmk:generalizedregular}
\begin{enumerate-(a)}
\item
When \( \kappa = \omega \), Definition~\ref{def:kappaanalytic} is equivalent to the usual definition of an analytic set, and could be equivalently reformulated also as: a set is (\(\omega\)-)analytic if it is a continuous image of 
the whole \( \pre{\Nat}{\Nat} \). In contrast, when \( \kappa \) is uncountable the latter reformulation is no more equivalent to that of Definition~\ref{def:kappaanalytic}, as shown in~\cite{LS}.
\item \label{rmk:generalizedregular-b}
It can be shown that, as in the classical case \( \kappa = \omega \), the \( \kappa^+ \)-Borel sets form a proper non-collapsing hierarchy of subsets of \( \pre{\kappa}{\kappa} \) of length \( \kappa^+ \) 
(but this requires brand new arguments if%
\footnote{By definition, \( \kappa^{< \kappa} = \sup \{ \kappa^\lambda \mid \lambda \text{ is a cardinal } < \kappa \} \). Equivalently, \( \kappa^{< \kappa} \) is the cardinality of the set of sequences \( \kappa^{< \alpha} \) introduced at the beginning of Section~\ref{sec:treesisometry} when setting \( \alpha = \kappa \); this is why we can unambiguously confuse the two notations.}
 \( \kappa^{< \kappa} \neq \kappa \), see~\cite{AM}). Moreover, the collection of \( \kappa \)-analytic subsets of \( \pre{\kappa}{\kappa} \) contains all open and closed sets and is closed under unions and intersections of size (at most) \( \kappa \); it follows that, in particular, all \( \kappa^+ \)-Borel sets are \( \kappa \)-analytic. The converse does not hold in general: indeed, when \( \kappa \) is regular and uncountable it is even no more true that \( \kappa^+ \)-Borel sets coincide with the collection of \( A \subseteq \pre{\kappa}{\kappa}\) such that both \( A \) and \( \pre{\kappa}{\kappa} \setminus A \) are \( \kappa \)-analytic, see e.g.\ \cite{Friedman:2011nx}.
\end{enumerate-(a)}
\end{remark}

Definitions~\ref{def:kappaBorel} and~\ref{def:kappaanalytic} can straightforwardly be adapted to 
subspaces of \( \pre{\kappa}{\kappa} \), notably including the generalized Cantor space \( \pre{\kappa}{2} \). Moreover, since the 
product of finitely many copies of \( \pre{\kappa}{\kappa} \) (endowed with the product of the bounded topology) is homeomorphic to \( \pre{\kappa}{\kappa} \), such definitions can be extended to those spaces as well. 

Definition~\ref{def:kappaBorel} further suggests to consider the following generalization of the notion of a (standard) Borel space, which corresponds to the case \( \kappa = \omega \). 
(see~\cite{Motto-Ros:2011qc,AM}).

\begin{defin}
A space \( X = (X, \mathcal{B}) \) with \( \mathcal{B} \) a \( \kappa^+ \)-algebra on \( X \) is called \emph{Borel \( \kappa \)-space} if there is \( A \subseteq \pre{\kappa}{\kappa} \) and a \emph{\( \kappa^+ \)-Borel isomorphism} between \( X \) and \( A \), that is a bijection \( f \colon X \to A \) such that for every \( B \subseteq X \)
\[ 
B \in \mathcal{B} \iff f(B) \text{ is (relatively) \( \kappa^+ \)-Borel in } A .
 \] 
 
If \( A \) can be taken to be \( \kappa^+ \)-Borel in \( \pre{\kappa}{\kappa} \), then \( X \) is called \emph{standard Borel \( \kappa \)-space}.
\end{defin}

Generalized descriptive set theory experienced a quick development in the last ten years, and the results obtained progressively revealed its deep connections
with other parts of set theory (infinite combinatorics, forcing theory, large cardinal assumptions, forcing axioms, and so on) and model theory. One striking example of this phenomenon is the study of the isomorphism 
relation between uncountable structures. 

By replacing \( \omega \) with an uncountable \( \kappa \) in the setup of page~\pageref{pageref:models} 
(see the paragraph after Definition~\ref{def:classifiable} and Remark~\ref{rmk:models}), one obtains that the space of (codes for) \( \mathcal{L} \)-structure of size \( \kappa \)
\[ 
\Mod^\kappa_{\mathcal{L}} = \prod_{i < I} \pre{\left( \pre{n_i}{\kappa} \right)}{2}.
 \] 
is homeomorphic to generalized Cantor space \( \pre{\kappa}{2} \), and that the isomorphism relation \(\cong \) on it is a \( \kappa \)-analytic equivalence relation. Further considering models of a given first-order theory \( T \) (or, more generally, of a sentence \( \varphi \) in the infinitary logic \( \L_{\kappa^+ \kappa} \)), one get the Borel \( \kappa \)-space
\[
\Mod^\kappa_\upvarphi = \{ x \in \Mod^\kappa_\L \mid M_x \models \upvarphi \}.
\]
By a straightforward generalization of an argument of Vaught (see~\cite{Vaught:1974kl} for \( \kappa = \omega_1 \) and~\cite{Friedman:2011nx,AM} for arbitrary uncountable \( \kappa \)'s), 
one can also see that if \( \kappa^{< \kappa} = \kappa \), then for every \( X \subseteq \Mod^\kappa_{\mathcal{L}} \) the following are equivalent:
\begin{itemizenew}
\item
\( X\)  is \( \kappa^+ \)-Borel and invariant under isomorphism;
\item
\( X = \Mod^\kappa_\upvarphi \) for some \( \L_{\kappa^+ \kappa} \)-sentence \( \varphi \).
\end{itemizenew}
It thus follows that if \( \kappa^{< \kappa} = \kappa \), then each \( \Mod^\kappa_\varphi \) as above is a standard Borel \( \kappa \)-space.

The theory of Borel reducibility can be straightforwardly generalized to the uncountable context as well.

\begin{defin} \label{def:reducibilitygeneralized}
Let \( E \) and \( F \) be equivalence relations defined on Borel \( \kappa \)-spaces \( X \) and \( Y \), respectively. We say that \( E \) is \emph{\( \kappa^+ \)-Borel reducible} to \( F \) (in symbols, \( E \leq^\kappa_B F \)) if there is a \( \kappa^+ \)-Borel function \( f \colon X \to Y \) such that for every \( x,y \in X \)
\[ 
x \mathrel{E} y \iff f(x) \mathrel{F} f(y).
 \] 
As in the classical case, the symbol \( <^\kappa_B \) denotes the strict part of \( \leq^\kappa_B \), while \( \sim^\kappa_B \) denotes the induced bi-reducibility.
\end{defin}

The notion of \( \kappa^+ \)-Borel reducibility can be used as a classification tool when the objects to be classified are uncountable/non-separable. This is clear when e.g.\ we want to compare the complexity of the 
isomorphism relation on the models of two \( \L_{\kappa^+ \kappa} \)-sentences, as such relations are \( \kappa \)-analytic equivalence relations on (standard, if  \( \kappa^{< \kappa} = \kappa \)) Borel \( \kappa \)-spaces. 
For example, by straightforwardly adapting the arguments used in 
the countable case one can show that for any infinite cardinal \( \kappa \) each of the following is, up to \( \kappa^+ \)-Borel bi-reducibility, the most complex isomorphism relation:
\begin{enumerate-(1)}
\item
the relation \( \cong^\kappa_{\mathsf{GRAPHS}} \) of isomorphism between graphs of size \( \kappa \);
\item
assuming  \( \kappa^{< \kappa} = \kappa \), the relation \( \cong^\kappa_{\mathsf{LO}} \) of isomorphism between linear orders of size \( \kappa \);
\item \label{kappatrees}
assuming \( \kappa^{< \kappa} = \kappa \) again, the relation \( \cong^\kappa_{\mathsf{TREES}} \) of isomorphism between trees in \( \T{\kappa}{\kappa} \).
\end{enumerate-(1)}

Along the same lines, the impressive work of S.~Friedman, 
Hyttinen, Kulikov and others (see~\cite{Friedman:2011nx,Hyttinen:2012fj,FHKFundamenta,Moreno}) revealed that, unlike in the countable case, there is a tight connection between the classification under \( \kappa^+ \)-Borel reducibility of the isomorphism relation on the models of size \( \kappa \) 
of a given first-order theory \( T \) (for suitable \emph{uncountable regular} cardinals \( \kappa \)), and the complexity of \( T \) in terms of Shelah's stability theory --- this is currently considered as one 
of the strongest 
motivations for pursuing the study of generalized descriptive set theory. Moreover, D\v{z}amonja and V\"a\"an\"anen showed that even when considering \emph{singular} cardinals \( \kappa \),  one can develop a reasonable descriptive set theory for the generalized Baire space \( \kappa^\kappa \). When \( \kappa \) is a  strong limit cardinal of countable cofinality, this theory turns out to be much closer to what one obtains in the classical context \( \kappa = \omega \) (as opposed to the case of an uncountable regular \( \kappa \), where even basic facts like the one considered in Remark~\ref{rmk:generalizedregular}\ref{rmk:generalizedregular-b}  may fail). For example, one can establish an interesting
connection between the descriptive set theoretic complexity of the chain isomorphism
orbit of a model, a natural reduction order on  trees of size \( \kappa \) with no \( \kappa \)-branches, and winning strategies in the corresponding
dynamic Ehrenfeucht-Fra\"iss\'e games (this yields a neat analog of the notion of Scott watershed from the Scott analysis of countable models, see~\cite{DzaVaa}). Further results concerning the bi-embeddability relation between uncountable structures (in both the regular and the singular case) may be found in~\cite{Motto-Ros:2011qc,AM}.

Nevertheless, the (classical) theory of Borel reducibility can be applied to a quite large variety of situations, and its scope as a classification tool is much wider than just isomorphism or bi-embeddability relations between (countable) structures. 
It is therefore natural to ask whether the same is true in the 
uncountable context, i.e.\ whether the preorder \( \leq^\kappa_B \) can be used to handle more classification problems: this would further reinforce the idea that generalized descriptive set theory is not just interesting 
because of its deep connections with other (apparently unrelated) parts of logic, but it may also be useful to understand and solve problems arising in other areas of mathematics. 

In the rest of this section we are going to show that
this is indeed the case by illustrating how \( \kappa^+ \)-Borel reducibility can be used to determine the complexity of the classification problem (up to isometry) for non-separable
complete metric spaces. 

First of all, given an uncountable cardinal \( \kappa \) we have to fix a (standard) \( \kappa^+ \)-Borel space of codings for  complete metric spaces 
of density character \( \kappa \). For technical reasons,%
\footnote{By~\cite{Katetov:1986tb}, an analogue \( \mathbb{U}_\kappa \) of the Urysohn space for complete metric spaces of density character \( \kappa \) exists if and only if \( \kappa^{< \kappa} = \kappa \). And even when such a 
\( \mathbb{U}_\kappa \) exists, the space \( F(\mathbb{U}_\kappa) \) of its closed subspaces endowed with the Effros \( \kappa^+ \)-Borel structure 
(i.e.\ the smallest \( \kappa^+ \)-algebra on \( F(\mathbb{U}_\kappa) \) generated by the 
sets as in~\eqref{eq:EffrosBorel}) would be a \(\kappa^+ \)-Borel space, but it is not clear whether it is standard or not. For these reasons, a generalization of Coding~\ref{coding1} to our new setup seems not the right move.}
we will use a generalization of Coding~\ref{coding2}; 	however, in order to better fit the setup of generalized descriptive set theory, we slightly modify it as follows.

Since any complete metric space \( X =  ( X , d ) \) of density character \( \kappa \) is completely determined by any dense subset \( D = \{ x_\alpha \mid \alpha < \kappa \} \) of it together with the distances between the 
\( x_\alpha \)'s, we can code \( X \) with the unique element \( c_X \in  \pre{\kappa \times \kappa \times \QQ_{> 0}}{2} \) such that for all \( \alpha, \beta < \kappa \) and \( q \in \QQ_{>0} \)
\begin{equation} \label{eq:defx_M}
c_X(\alpha, \beta, q) = 1 \iff d(x_\alpha,x_\beta) < q.
\end{equation}
This suggests to consider the space of codes \(\MS_\kappa   \) consisting of those \( c \in  \pre{\kappa \times \kappa \times \QQ_{> 0}}{2}  \)  satisfying the following conditions:
\begin{align*}
\forall \alpha,\beta < \kappa\, \forall q , q ' \in \QQ_{>0} \, & \left [ \text{if }q \leq q' \text{ then } c ( \alpha , \beta , q ) \leq c ( \alpha , \beta , q' ) \right ] 
\\
\forall \alpha , \beta < \kappa \, \exists q \in \QQ_{>0} \, & \left [ c ( \alpha , \beta , q ) = 1\right ] 
\\
\forall \alpha < \kappa \, \forall q \in \QQ_{>0} \, & \left[ c ( \alpha , \alpha , q ) = 1 \right] 
\\
\forall \alpha < \beta < \kappa \, \exists q \in \QQ_{>0} \, & \left [ c ( \alpha , \beta , q ) = 0 \right ] 
\\
\forall \alpha , \beta < \kappa \, \forall q \in \QQ_{>0} \, & \left [ c ( \alpha , \beta , q ) = 1 \iff c ( \beta , \alpha , q ) = 1 \right] 
\\
\forall \alpha , \beta , \gamma < \kappa \, \forall q,q' \in \QQ_{>0} \, & \left[ \text{if } c(\alpha,\beta,q) = 1 \text{ and } c ( \beta , \gamma , q ' ) = 1 \text{ then } c ( \alpha , \gamma , q + q ' ) = 1 \right ] 
\\
\forall \alpha<\kappa \, \exists \beta < \kappa \, \exists  q \in \QQ_{>0} \forall \gamma < \alpha \, & [ c ( \gamma , \beta , q ) = 0 ].
\end{align*}
The first six conditions are designed so that given any \( c \in \MS_\kappa \), the (well-defined) map \( d_c \colon \kappa \times \kappa \to \RR \) defined by setting 
\[ 
d_c(\alpha,\beta)  =  \inf \{q \in \QQ_{>0} \mid c(\alpha,\beta,q) = 1 \} 
 \] 
is a metric on \( \kappa \); denote by \( M_c \) the completion of \( ( \kappa , d_c ) \), and notice that the last condition ensures that \( M_c \) has density character \( \kappa \).%
\footnote{This is why we do not need to form any quotient of \( (\kappa, d_c ) \) before completing it to the space \( M_c \) (compare this with the construction \( c \mapsto M_c \) in Coding~\ref{coding2}): the fact that we now 
want to deal only with spaces of density character
\emph{exactly} \( \kappa \) enables us to always require that distinct elements of \( \kappa \) have positive distance. In the countable case, instead, this was not possible because there we wanted to include also 
codes for finite spaces.} 
It is straightforward to check that the code \( c_X \) from~\eqref{eq:defx_M} of any complete metric space \( X \) of density character \( \kappa \) belongs to \( \MS_\kappa \) and is such that \( X \) is isometric to 
\( M_{c_X} \); conversely, for each \( c \in \MS_\kappa \) the space \( M_c \) is a complete metric space of density character \( \kappa \). Therefore we can view \( \MS_\kappa \) as the space of codes for all complete 
metric spaces of density character \( \kappa \).

It is clear that when \( \kappa = \alzero \) we get a coding \( \MS_\alzero \) which is obviously equivalent to the one from Coding~\ref{coding2}. The reason for introducing this further variant is that the space 
\( \pre{\kappa \times \kappa \times \QQ_{> 0}}{2} \) can now be canonically embedded into \( \pre{\kappa \times \kappa \times \kappa}{2} \) (using any bijection between \( \QQ_{> 0} \) and \( \alzero \leq \kappa \)), and the 
latter space is in turn canonically isomorphic to the generalized Cantor space 
\( \pre{\kappa}{2} \); therefore the space \( \pre{\kappa \times \kappa \times \QQ_{> 0}}{2} \) can be identified with a closed subspace of \( \pre{\kappa}{2} \), and it is straightforward to check that 
under such identification our space of codings \( \MS_\kappa \) becomes a 
\( \kappa^+ \)-Borel subspace of \( \pre{\kappa}{2} \).
It thus follows that the relation of isometry \( \cong^i \) is a \( \kappa \)-analytic equivalence relation on the standard Borel \( \kappa \)-space \( \MS_\kappa \), and thus its restriction to any natural subclass of 
\( \MS_\kappa \) can be analyzed in terms of \( \kappa^+ \)-Borel reducibility.

Some results we obtained in the separable case straightforwardly generalize to the non-separable context as well (with essentially the same proof, although we now use a slightly  different coding). 
For example, we have the following classification result (compare it with Theorem~\ref{thm:upperboundultrametric}).

\begin{theorem}\label{thm:upperboundultrametrickappa}
\begin{enumerate-(1)}
\item \label{thm:upperboundultrametrickappa-1}
The isometry relation on all complete metric spaces of density character \( \kappa \) and \emph{size \( \kappa \)} (including thus the \emph{discrete} ones) is \( \kappa^+ \)-Borel reducible to 
\( \cong^\kappa_{\mathsf{GRAPHS}} \).
\item \label{thm:upperboundultrametrickappa-2}
The isometry relation on all complete \emph{ultrametric} spaces of density character \( \kappa \)  is \( \kappa^+ \)-Borel reducible to \( \cong^\kappa_{\mathsf{GRAPHS}} \).
\end{enumerate-(1)}
\end{theorem}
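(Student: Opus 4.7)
The plan is to directly translate the proofs of Theorem~\ref{thm:upperboundultrametric} to the generalized setting, replacing the countable coding by \( \MS_\kappa \) and producing \( \kappa \)-sized \( \L \)-structures (i.e.\ elements of \( \Mod^\kappa_\L \)) in place of countable ones. Since the standard ``graph encoding'' argument giving \( {\cong^\kappa_\L} \leq^\kappa_B {\cong^\kappa_{\mathsf{GRAPHS}}} \) works uniformly in \( \kappa \) for any relational language \( \L \) of size at most \( \kappa \), it suffices in each part to exhibit a \( \kappa^+ \)-Borel reduction into some such \( {\cong^\kappa_\L} \).

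For part (1), take \( \L = \{ P_q \mid q \in \QQ_{>0} \} \) with each \( P_q \) binary. Given \( c \in \MS_\kappa \) coding a space of size \( \kappa \) and density character \( \kappa \), let \( G_c \) be the \( \L \)-structure on \( \kappa \) defined by \( P_q^{G_c}(\alpha,\beta) \iff c(\alpha,\beta,q) = 1 \). The map \( c \mapsto G_c \) is tautologically \( \kappa^+ \)-Borel, since each relation \( P_q^{G_c} \) depends on a single coordinate of \( c \). The cleanest case is the \emph{discrete} one: the dense subset indexed by \( \kappa \) coincides with the whole space \( M_c \), so every isometry \( M_c \cong^i M_{c'} \) is literally a distance-preserving bijection of \( \kappa \) and hence an isomorphism \( G_c \cong G_{c'} \); conversely, any isomorphism \( G_c \cong G_{c'} \) is a distance-preserving bijection between the two complete spaces. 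For a general size-\( \kappa \) space of density character \( \kappa \), one argues that it is harmless to assume the coding enumerates the whole space: either by restricting to the sub-\( \kappa^+ \)-Borel class of codes for which \( (\kappa, d_c) \) is already complete, or by showing that a distance-preserving bijection of any two dense \( \kappa \)-subsets induced by the codings extends uniquely to an isometry of the completions (the converse direction being immediate).

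For part (2), take \( \L = \{ R \} \cup \{ S_q \mid q \in \QQ_{>0} \} \) with \( R \) binary and \( S_q \) unary, exactly as in the countable proof. Given \( c \in \MS_\kappa \) coding a complete ultrametric space \( X = M_c \) of density character \( \kappa \), Fact~\ref{fct:basicsonultra}\ref{fct:basicsonultra-3} ensures that the collection \( B_c \) of open balls of \( X \) has size at most \( \kappa \) and that every ball is represented by some pair \( (\alpha, q) \in \kappa \times \QQ_{>0} \) with \( \alpha \) in the dense subset. Fixing a bijection \( \kappa \times \QQ_{>0} \cong \kappa \), define \( A_c \) as the \( \L \)-structure on \( \kappa \) interpreting \( R \) as ball-inclusion and \( S_q \) as ``diameter \( {}<q \),'' either enriching \( \L \) with a binary relation \( E \) for ``same ball'' or selecting canonical indices. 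The verification \( M_c \cong^i M_{c'} \iff A_c \cong A_{c'} \) then proceeds verbatim as in the countable case: an isometry induces the bijection of ball-sets \( B \mapsto \varphi(B) \), and an isomorphism \( f \colon A_c \to A_{c'} \) is inverted via \( x \mapsto \bigcap_{n \in \Nat} f(B_{1/n}(x)) \), well-defined because diameters vanish along a decreasing sequence of \emph{rational} radii and \( M_{c'} \) is complete.

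The principal obstacle I anticipate, in both parts, is verifying that the assignments \( c \mapsto G_c \) and \( c \mapsto A_c \) land in \( \Mod^\kappa_\L \) in a genuinely \( \kappa^+ \)-Borel way despite the non-canonical features of the coding: in part (2), distinct pairs \( (\alpha,q) \) and \( (\beta,q') \) can encode the same ball of \( M_c \), and which indices are ``represented'' depends on \( c \); in part (1), the subtlety is that the \( \kappa \)-enumeration may be a proper dense subset of \( M_c \). This is bookkeeping: either enrich \( \L \) with an equivalence symbol and check that isomorphism of the resulting structures still reduces to \( \cong^\kappa_{\mathsf{GRAPHS}} \), or select canonical (e.g.\ lexicographically least) representatives in a \( \kappa^+ \)-Borel fashion. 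Once this uniformity is settled, none of the metric/combinatorial ingredients of Theorem~\ref{thm:upperboundultrametric} are cardinality-sensitive, and the proofs transfer with only notational changes.
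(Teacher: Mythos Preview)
Your proposal is correct and follows the same approach as the paper, which does not give a separate proof but simply notes that the argument for Theorem~\ref{thm:upperboundultrametric} generalizes verbatim once the coding \( \MS_\kappa \) replaces the countable one. One small correction to your part~(1) discussion: the parenthetical ``the converse direction being immediate'' is not right for non-discrete size-\( \kappa \) spaces, since an isometry of the completions need not carry one code's dense \( \kappa \)-subset onto the other's (for instance, code \( \{0\} \cup \{1/n : n \geq 1\} \) once with \( 0 \) among the enumerated points and once without --- the resulting structures \( G_c \) are non-isomorphic); your first option, restricting to codes \( c \) for which \( (\kappa, d_c) \) is already complete, is the sound one and is what the paper implicitly intends.
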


It can also be shown that the classification provided in Theorem~\ref{thm:upperboundultrametrickappa}\ref{thm:upperboundultrametrickappa-1} is optimal.

\begin{theorem} \label{thm:graphskappaanddiscrete}
The isomorphism relation \( \cong^\kappa_{\mathsf{GRAPHS}} \) is \( \kappa^+ \)-Borel reducible to the isometry 
relation on discrete (hence also locally compact) complete metric spaces of density character \( \kappa \).
\end{theorem}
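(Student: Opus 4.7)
My plan is to produce a direct and elementary $\kappa^+$-Borel reduction $G \mapsto c_{X_G}$ from $\cong^\kappa_{\mathsf{GRAPHS}}$ into $\MS_\kappa$, avoiding the tree construction of Section~\ref{sec:treesisometry} entirely and exploiting the fact that the target class of the theorem is \emph{metric}, not \emph{ultra}metric. Given a graph $G$ on $\kappa$ with (symmetric, irreflexive) edge relation $E_G$, I would define the metric $d_G$ on $\kappa$ by setting $d_G(\alpha,\alpha) = 0$, $d_G(\alpha,\beta) = 1$ whenever $\{\alpha,\beta\} \in E_G$, and $d_G(\alpha,\beta) = 2$ otherwise. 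A routine case analysis confirms the triangle inequality: the only nontrivial case is $\alpha,\beta,\gamma$ pairwise distinct, in which case all three distances lie in $\{1,2\}$ and so the right-hand side is at least $2$. Put $X_G = (\kappa,d_G)$; since all nonzero distances are at least $1$, every point of $X_G$ is isolated, so $X_G$ is discrete, automatically complete and locally compact, and it has density character $|X_G| = \kappa$.

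Next I would verify that the coding map $G \mapsto c_{X_G} \in \MS_\kappa$, defined by $c_{X_G}(\alpha,\beta,q) = 1 \iff d_G(\alpha,\beta) < q$, is well-defined and $\kappa^+$-Borel. The value $c_{X_G}(\alpha,\beta,q)$ is a fixed Boolean combination of the single-coordinate clauses $\alpha = \beta$, $G(\alpha,\beta) = 1$, $q > 1$, and $q > 2$, so $\kappa^+$-Borel measurability is immediate. A straightforward check confirms the seven defining conditions of $\MS_\kappa$ (the last one being witnessed by taking $\beta = \alpha$ and any rational $q < 1$), so $c_{X_G}$ indeed lies in $\MS_\kappa$, and the corresponding completion $M_{c_{X_G}}$ coincides with $X_G$.

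Finally I would check that the map reduces $\cong^\kappa_{\mathsf{GRAPHS}}$ to $\cong^i$. If $\sigma \colon G \to G'$ is a graph isomorphism, then $\sigma$ preserves adjacency, hence preserves $d_G$, and so is an isometry $X_G \to X_{G'}$. Conversely, any isometry $\psi \colon X_G \to X_{G'}$ is a bijection $\kappa \to \kappa$; since $d_G$ takes the three distinct values $0, 1, 2$ respectively on the diagonal, on edges, and on non-edges, any bijection preserving $d_G$ preserves the edge relation and is therefore a graph isomorphism. I do not anticipate any serious obstacle in this approach: the three-valued encoding arranges by construction that the isometry group of $X_G$ coincides with the automorphism group of $G$, and discreteness (hence local compactness) is a free byproduct of the distances being bounded away from zero. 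The only slightly technical point is the routine verification that the coding map is $\kappa^+$-Borel and lands in $\MS_\kappa$.
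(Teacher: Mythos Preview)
Your proposal is correct and is essentially identical to the paper's own proof: the paper also assigns to a graph $G$ on $\kappa$ the discrete metric space $(\kappa,d_G)$ with $d_G(\alpha,\beta)=1$ for adjacent distinct vertices and $d_G(\alpha,\beta)=2$ otherwise, and declares the rest straightforward. You have simply fleshed out the verifications (triangle inequality, membership in $\MS_\kappa$, $\kappa^+$-Borel measurability, and the two directions of the reduction) that the paper leaves to the reader.
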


\begin{proof}
Given a graph \( G \) on \( \kappa \), let \( X_G \) be the discrete complete metric space on \( \kappa \) whose distance \( d_G \) is defined by setting \( d_G(\alpha,\beta) = 1 \) if \( \alpha \) and \( \beta \) are 
adjacent in \( G \), and \( d_G(\alpha,\beta) = 2 \) otherwise (for all distinct \( \alpha,\beta \in \kappa \)). It is straightforward to check that the map 
\( G \mapsto X_G \) is a \( \kappa^+ \)-Borel reduction of \( \cong^\kappa_{\mathsf{GRAPHS}} \) to \( \cong^i \) whose range consists discrete spaces.
\end{proof}

It is instead not clear in general whether the classification from  Theorem~\ref{thm:upperboundultrametrickappa}\ref{thm:upperboundultrametrickappa-2} is optimal as well. However, we are now going to present a very 
strong anti-classification result showing that \emph{consistently} the isometry relation on \emph{discrete} complete \emph{ultrametric} spaces of density character \( \kappa \) (for almost any uncountable successor cardinal 
\( \kappa \)) is 
as complex as possible with respect to \( \leq^\kappa_B \). This fact may be interpreted as saying that it is not possible to find in \( \mathsf{ZFC} \) alone a satisfactory classification for non-separable complete metric spaces,
even when restricting the attention to extremely simple spaces like the discrete (hence also locally compact) ultrametric ones.

\begin{theorem}[\( \mathsf{ZF} \)] \label{thm:mainkappa} 
Assume \( \mathsf{V=L} \). Let \( \kappa = \lambda^+ \) be such that \( \lambda^\omega = \lambda \) (equivalently: \( \lambda \) is either a successor cardinal or a limit cardinal of uncountable cofinality).  
Then the isometry relation on (discrete) complete (ultra)metric spaces of density character \( \kappa \) is \emph{complete} for \( \kappa \)-analytic equivalence relations, i.e.\ every such equivalence relation is \( \kappa^+ \)-Borel reducible to it.
\end{theorem}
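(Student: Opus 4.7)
The plan is to split the proof into two steps. The main step is to establish, under the stated hypotheses, that the graph isomorphism relation $\cong^\kappa_{\mathsf{GRAPHS}}$ is $\kappa^+$-Borel complete for all $\kappa$-analytic equivalence relations on standard Borel $\kappa$-spaces. The second step is then to compose this completeness result with the reduction from the proof of Theorem~\ref{thm:graphskappaanddiscrete}: observe that the metric $d_G$ constructed there takes only the values $0$, $1$ and $2$, so any triple of such distances trivially satisfies the strong triangle inequality, and hence the spaces $X_G$ are in fact discrete complete \emph{ultrametric} spaces of density character $\kappa$. Composing the two reductions yields the theorem for both the metric and the ultrametric version, and for both the class of all complete spaces of density character $\kappa$ and its discrete subclass.

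For the main step, the plan is to invoke the combinatorial principle $\diamondsuit_\kappa(S)$ with $S = \{\alpha<\kappa \mid \cf(\alpha)=\omega\}$. Under $\mathsf{V=L}$ one has GCH, and for $\kappa = \lambda^+$ the assumption $\lambda^\omega = \lambda$ (equivalent, under GCH, to $\lambda$ being a successor or a limit of uncountable cofinality) guarantees both that $\kappa^{<\kappa}=\kappa$ and that $S$ is a stationary subset of $\kappa$; by Jensen's theorem $\diamondsuit_\kappa(S)$ holds in $L$. Given a $\kappa$-analytic equivalence relation $E$ on a standard Borel $\kappa$-space $X$, fix a closed set $C \subseteq \pre{\kappa}{\kappa}$ and a continuous surjection from $C$ onto (a copy of) the graph of $E$, and then use the $\diamondsuit$-sequence as a universal ``guessing device'' to attach to each $x \in X$ a graph $G_x$ on $\kappa$ so that any isomorphism $G_x \cong G_y$ can be decoded, using the guessed witness, to yield an element of $C$ projecting to $(x,y)$ (so that $x \mathrel{E} y$), while conversely any $E$-equivalent pair yields such an isomorphism. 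This is the standard $\diamondsuit$-coding strategy developed in generalized descriptive set theory by S.\ Friedman, Hyttinen, Kulikov and their collaborators, and the hypothesis $\lambda^\omega = \lambda$ is exactly what is needed to run it in the present setting.

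The main obstacle is clearly the first step: the $\diamondsuit$-based coding into graphs is delicate and depends in an essential way on the combinatorics available in $L$, not merely on the cardinal arithmetic we get from GCH. In contrast, once this is granted, the passage from graph isomorphism to isometry between discrete complete ultrametric spaces of density character $\kappa$ is essentially an observation: the construction in Theorem~\ref{thm:graphskappaanddiscrete} already yields ultrametric spaces, since it uses only two distinct nonzero distances, so no further work is required on the metric side.
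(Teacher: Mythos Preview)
Your second step contains a genuine error: the metric $d_G$ from Theorem~\ref{thm:graphskappaanddiscrete} is \emph{not} in general an ultrametric. Take any graph containing a path of length two, say vertices $a,b,c$ with $a$ adjacent to $b$ and $b$ adjacent to $c$ but $a$ not adjacent to $c$: then $d_G(a,b)=d_G(b,c)=1$ while $d_G(a,c)=2 > \max\{1,1\}$, violating the strong triangle inequality. A metric taking only the values $0,1,2$ need not be an ultrametric; you are perhaps thinking of two-valued metrics, which are. Consequently your argument establishes completeness of isometry on discrete complete \emph{metric} spaces of density character $\kappa$, but says nothing about the ultrametric case, which is part of the statement.

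The paper's proof takes a different route precisely to handle the ultrametric case. It invokes the Hyttinen--Kulikov result that isomorphism on trees in $\T{\alpha}{\kappa}$ for the fixed \emph{countable} height $\alpha=\omega+\omega+2$ is already complete for $\kappa$-analytic equivalence relations under the stated hypotheses, and then applies the tree-to-ultrametric-space construction $T\mapsto X_T$ of Section~\ref{sec:treesisometry} (Theorem~\ref{thm:red}) with a bounded-away-from-$0$ set $D\subseteq\RR_{>0}$ containing a strictly decreasing sequence of length $\omega+\omega+2$. The key point is that this encoding needs a strictly decreasing sequence of positive reals of length $\alpha$, which exists only for countable $\alpha$; completeness of \emph{short} trees (rather than graphs, or trees of height $\kappa$) is exactly what makes the ultrametric encoding go through. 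The Remark in the paper immediately after the proof flags this obstruction explicitly. Your first step, going straight to $\cong^\kappa_{\mathsf{GRAPHS}}$ via $\diamondsuit$, is a reasonable alternative to citing Hyttinen--Kulikov for the metric case, but it cannot be salvaged for the ultrametric conclusion without passing through something like the short-tree step.
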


\begin{proof}
Set \( \alpha = \omega+ \omega+2 \). In~\cite[Theorem 2.10]{Hyttinen:2012fj}, it is shown that under our assumptions the relation of isomorphism between trees in \( \T{\alpha}{\kappa} \) is \( \leq^\kappa_B \)-above every 
\( \kappa \)-analytic equivalence relations on \( \pre{\kappa}{2} \). Arguing as in the first part of Proposition~\ref{prop:reductiontostandard}, one can see that every \( \kappa \)-analytic equivalence relation 
on an arbitrary \( \kappa \)-Borel 
spaces (which may be construed as a subspace of \( \pre{\kappa}{2} \)) is the restriction of a \( \kappa \)-analytic equivalence relation on a \( \kappa^+ \)-Borel subset of \( \pre{\kappa}{2} \), and thus it is also the 
restriction of a \( \kappa \)-analytic equivalence relation on the whole \( \pre{\kappa}{2} \). Thus the result of Hyttinen and Kulikov may be rephrased by saying that every \( \kappa \)-analytic equivalence relation (with 
an arbitrary Borel \( \kappa \)-space as domain) is \( \kappa^+ \)-Borel reducible to isomorphism on \( \T{\alpha}{\kappa} \). Therefore it is enough to show that isomorphism on \( \T{\alpha}{\kappa} \) is \( \kappa^+ \)-Borel 
reducible to isometry between discrete ultrametric spaces in \( \MS_\kappa \).

Let \( D \subseteq \RR_{> 0} \) be any bounded away from \( 0 \) countable set containing a strictly decreasing 
sequence of reals \( (r_\beta)_{\beta < \alpha } \) of length \( \omega+\omega+2 \), and let \( T \mapsto X_T \) be 
the map defined at the beginning of Section~\ref{sec:treesisometry}. Such a map can clearly be construed as a \( \kappa^+ \)-Borel functions between the Borel \( \kappa \)-spaces 
\( \T{\alpha}{\kappa} \) and \( \MS_\kappa \), and it reduces isomorphism 
to isometry by Theorem~\ref{thm:red}. Since all the spaces \( X_T \) are ultrametric and discrete (because \( D \) is bounded away from \( 0 \)), we are done.
\end{proof}

\begin{remark}
In the proof of Theorem~\ref{thm:mainkappa} it is crucial that we start from trees in \( \T{\alpha}{\kappa} \) with \emph{\(\alpha\) countable}: this is because in the construction from Section~\ref{sec:treesisometry} 
we need to consider strictly 
decreasing sequences of reals of length \( \alpha \), and by the separability of \( \RR \) any such sequence must have countable length. 
This is also the reason why one cannot generalize Gao-Kechris' proof of Theorem~\ref{thm:lowerboundsfordiscrete}\ref{thm:lowerboundsfordiscrete-2}
to directly show, in \( \mathsf{ZFC} \) alone, that \( \cong^\kappa_{\mathsf{TREES}} \) is \( \kappa^+ \)-Borel reducible to isometry on complete ultrametric spaces of density character \( \kappa \) 
(thus showing the optimality of the classification result in Theorem~\ref{thm:upperboundultrametrickappa}\ref{thm:upperboundultrametrickappa-2} under the additional assumption \( \kappa^{< \kappa} = \kappa \)).
\end{remark}

The assumption \( \mathsf{V = L} \) in Theorem~\ref{thm:mainkappa} cannot be completely removed, as there are models of \( \mathsf{ZF+DC} \) in which there are even clopen \( \kappa^+ \)-Borel equivalence relations 
on the generalized Baire space 
\( \pre{\kappa}{2} \) which are not reducible to e.g.\ the isometry relation on complete ultrametric spaces of density character \( \kappa \).

\begin{theorem}[\( \mathsf{ZF} \)] \label{thm:mainL(R)}
Assume \( \mathsf{AD + V=L(\RR)} \). Then for every uncountable \( \kappa \) there is a clopen equivalence relation \( E \) on \( \pre{\kappa}{2} \) such that there is no reduction of \( E \) to isometry between ultrametric and/or discrete spaces in \( \MS_\kappa \).  In particular, \( E \) is not \( \kappa^+ \)-Borel reducible to such isometry relations.
\end{theorem}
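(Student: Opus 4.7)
The plan is to exhibit, for each uncountable $\kappa$, a clopen equivalence relation $E$ on $\pre{\kappa}{2}$ whose quotient has cardinality $\omega_1$, and then to argue via AD$+\mathsf{V}=\mathsf{L}(\RR)$ that no reduction $h \colon \pre{\kappa}{2} \to \MS_\kappa$ into isometry classes of discrete/ultrametric spaces of density character $\kappa$ can exist, by showing any such $h$ would produce an injection of $\omega_1$ into $\RR$, contradicting the classical consequence of AD that $\omega_1 \not\hookrightarrow \RR$.

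First I would construct $E$. Since $\kappa$ is uncountable we have $\omega_1 \leq \kappa$, so for each successor ordinal $\alpha < \omega_1$ the basic cylinder $B_\alpha = \Nbhd^\kappa_{0^{\alpha-1} \frown 1}$ is clopen, and the family $\{B_\alpha\}$ is pairwise disjoint. The union $\bigcup_\alpha B_\alpha$ covers every $f \in \pre{\kappa}{2}$ whose first $1$ occurs at a successor ordinal $<\omega_1$; the remaining elements (whose first $1$ falls at a limit or at an ordinal $\geq \omega_1$, including the zero sequence) can be absorbed into a single additional class by a mild modification (e.g.~grouping them into a ``default'' clopen piece defined by a coordinate at $\omega_1$, available since $\omega_1 < \kappa$). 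This yields a clopen partition of $\pre{\kappa}{2}$ into $\omega_1$ non-empty pieces, and we take $E$ to be the corresponding equivalence relation.

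Second, assume toward contradiction that $h \colon \pre{\kappa}{2} \to \MS_\kappa$ reduces $E$ to $\cong^i$ restricted to discrete/ultrametric spaces. Picking a representative $f_\xi$ from the $\xi$-th class, $(h(f_\xi))_{\xi < \omega_1}$ is a family of pairwise non-isometric discrete or ultrametric complete spaces of density character $\kappa$. By Fact~\ref{fct:basicsonultra}(3), each distance set $R(h(f_\xi)) \subseteq \RR_{>0}$ has cardinality at most $\kappa$, but since under AD any $\omega_1$-sized subset of $\RR$ would witness an injection $\omega_1 \hookrightarrow \RR$, each $R(h(f_\xi))$ is in fact countable. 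Applying Theorem~\ref{thm:red}, the isometry type of each $h(f_\xi)$ corresponds to an isomorphism type of a tree in $\T{\alpha_\xi}{\kappa}$ for some countable ordinal $\alpha_\xi$ (determined by the order type of $R(h(f_\xi))$ and the tree-of-balls decomposition).

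Finally, using $\mathsf{V}=\mathsf{L}(\RR)$ I would extract a real-valued injective invariant $\iota \colon \omega_1 \to \RR$: exploiting that every object (in particular $h$ and each $h(f_\xi)$) is definable from a real and ordinal parameters, and that the isomorphism class of the ball-tree is already uniformly reducible to its countable distance-set data together with a countable Scott-type invariant (the spaces being ``locally countable'' from the perspective of distance classes), the map $\xi \mapsto \iota(\xi)$ injects $\omega_1$ into $\RR$, yielding the desired contradiction with AD. The \textbf{main obstacle} lies precisely in this last step: tree isomorphism types in $\T{\alpha}{\kappa}$ with $\kappa$ uncountable are a priori objects of size $\kappa$, not naturally coded by reals. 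The delicate point is to leverage the definable presentation forced by $\mathsf{V}=\mathsf{L}(\RR)$ (via the Moschovakis coding lemma or an analogue) to show that the image of $h$ in fact parametrizes only countable amounts of tree-structure data per isometry class, enabling the reduction-to-reals that collides with AD.
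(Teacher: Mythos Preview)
Your approach has a genuine gap at precisely the point you flag as the ``main obstacle,'' and the strategy leading up to it also contains errors that suggest the route cannot be completed.

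First, the argument that each $R(h(f_\xi))$ is countable is flawed. Under $\mathsf{AD}$, the correct statement is that every \emph{well-orderable} subset of $\RR$ is countable; it is \emph{not} the case that every subset of $\RR$ is countable (the continuum itself is uncountable and not well-orderable). A distance set of an ultrametric space of density character $\kappa$ could perfectly well have size continuum. Second, Theorem~\ref{thm:red} goes in the wrong direction for your purposes: it produces ultrametric spaces \emph{from} trees, not trees from arbitrary ultrametric spaces. A general $X \in \MS_\kappa$ with $R(X)$ countable need not be of the form $X_T$ for any $T \in \T{\alpha}{\kappa}$. Third, and most importantly, even granting $\omega_1$ pairwise non-isometric ultrametric spaces of density character $\kappa$, there is simply no mechanism to inject their isometry types into $\RR$: these are objects of size $\kappa$, and under $\mathsf{AD}+\mathsf{V=L(\RR)}$ the isometry quotient of $\MS_\kappa$ is not a subquotient of $\RR$ in any useful sense. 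Invoking the coding lemma here is wishful: that lemma produces real codes for sets definable from reals, but an isometry class in $\MS_\kappa$ is not such an object.

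The paper's proof proceeds entirely differently and avoids cardinality bookkeeping altogether. The key external input is Hjorth's theorem that under $\mathsf{AD}+\mathsf{V=L(\RR)}$, no function (definable or not) reduces a turbulent Polish group action on $\pre{\Nat}{2}$ to $\cong^\kappa_{\mathsf{GRAPHS}}$ for any $\kappa$. One then pulls back such a turbulent $E_G$ to a clopen $E$ on $\pre{\kappa}{2}$ via $x \mathrel{E} y \iff (x \restriction \omega) \mathrel{E_G} (y \restriction \omega)$; note this $E$ has continuum-many classes, not $\omega_1$. A reduction of $E$ to isometry on discrete or ultrametric spaces would, composed with the classification of Theorem~\ref{thm:upperboundultrametrickappa}, yield a reduction of $E$ to $\cong^\kappa_{\mathsf{GRAPHS}}$, and precomposing with $z \mapsto z {\uparrow}^\kappa$ would then reduce $E_G$ itself to $\cong^\kappa_{\mathsf{GRAPHS}}$, contradicting Hjorth. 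The essential idea you are missing is that turbulence, not cardinality, is the obstruction.
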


\begin{proof}
Fix any uncountable \( \kappa \) and let \( E_G \) be any orbit equivalence relation induced by a Polish group \( G \) acting in a turbulent way on \( \pre{\Nat}{2} \). By a remarkable result of Hjorth 
(see~\cite[Theorem 9.18]{Hjorth:2000zr}), under our assumptions
there is no function \( f \colon \pre{\Nat}{2} \to \Mod^\kappa_\L \) (where \( \L \) is the graph language) reducing \( E_G \) to \( \cong^\kappa_{\mathsf{GRAPHS}} \). 

Let \( E \) be the equivalence relation on \( \pre{\kappa}{2} \) defined by
\[ 
x \mathrel{E} y \iff x \restriction \omega \mathrel{E_G} y \restriction \omega.
 \] 
It is clear that \( E \) is clopen with 
respect to the bounded topology \( \tau_b \) on \( \pre{\kappa}{2} \). Assume towards a contradiction that there is a reduction of \( E \) to isometry on ultrametric and/or 
discrete spaces. Then by Theorem~\ref{thm:upperboundultrametrickappa} there would also be a reduction \( g \colon \pre{\kappa}{2} \to \Mod^\kappa_\L \) of \( E \) to \( \cong^\kappa_{\mathsf{GRAPHS}} \). 
Let \( f \colon \pre{\Nat}{2} \to \Mod^\kappa_\L \) be defined by setting \( f(z) = g(z \! \uparrow \! {}^ \kappa ) \) for every \( z \in \pre{\Nat}{2} \),
where \( z \! \uparrow \! {}^\kappa \in \pre{\kappa}{2} \) is defined by
\[ 
z \! \uparrow \! {}^\kappa (\alpha) = 
\begin{cases}
z(\alpha) & \text{if } \alpha < \omega \\
0 & \text{otherwise.}
\end{cases}
 \] 
Then for every \( z_0,z_1 \in \pre{\Nat}{2} \)
\[ 
{z_0 \mathrel{E_G} z_1} \iff { z_0 \! \uparrow \! {}^\kappa \mathrel{E}  z_1 \! \uparrow \! {}^\kappa} \iff {g( z_0 \! \uparrow \! {}^\kappa) \cong g( z_1 \! \uparrow \! {}^\kappa)},
 \] 
that is \( f \) would reduce \( E_G \) to \( \cong^\kappa_{\mathsf{GRAPHS}} \), contradicting Hjorth's result.
\end{proof}

Notice however that 
Theorem~\ref{thm:mainL(R)} leaves open the problem of determining the exact complexity of the isometry relation \( \cong^i \) on the whole \( \MS_\kappa \) when assuming \( \mathsf{ZF+AD+V=L(\RR)} \). 

\begin{remark}
The tension between Theorems~\ref{thm:mainkappa} and~\ref{thm:mainL(R)} seems to suggest that
in the uncountable context it could be more interesting to use ``definable'' functions 
(e.g.\ functions in some suitable well-behaved inner model like \( \mathrm{L}(\RR) \) or \( \mathrm{HOD} \)) rather than arbitrary \( \kappa^+ \)-Borel functions to classify uncountable/non-separable mathematical 
objects, even when working in the full \( \mathsf{ZFC} \). 
In fact, (the proof of) Theorem~\ref{thm:mainL(R)} shows that if we assume \( \mathsf{AD}^{\mathrm{L}(\RR)} \), then for every uncountable cardinal \( \kappa \) there 
is a clopen equivalence relation \( E \) on \( \pre{\kappa}{2} \) which cannot be reduced to isometry on ultrametric and/or discrete spaces in \( \MS_\kappa \) via \emph{any function in \( \mathrm{L}(\RR) \)}.
(Recall that \( \mathsf{AD}^{\mathrm{L}(\RR)} \) follows from both large cardinal assumptions, and strong forcing axioms, like \( \mathsf{PFA} \).)
On the other hand, it is not hard to show (heavily using the axiom of choice \( \mathsf{AC} \)) 
that if \( \kappa^{< \kappa} = \kappa \), then any \( E \) as above is \emph{\( \kappa^+ \)-Borel reducible} to \( \cong^\kappa_{\mathsf{GRAPHS}} \) and hence, 
by Theorem~\ref{thm:graphskappaanddiscrete}, to isometry between discrete complete metric spaces of density character \( \kappa \); 
however, any witness of the latter cannot belong to \( \mathrm{L}(\RR) \) (at least in presence 
of large cardinals or forcing axioms), and thus it arguably fails to be sufficiently ``concrete'' as an assignment of complete invariants.%
\footnote{This situation is absent in the countable/separable context, as any Borel function between two standard Borel (\( \omega \)-)spaces 
is definable using reals and (countable) ordinals as parameters, and thus it belongs to any inner model containing the same reals of the universe, 
such as \( \mathrm{L}(\RR) \).}
\end{remark}

Let us conclude this section by further mentioning 
that (anti-)classification results similar to Theorems~\ref{thm:graphskappaanddiscrete} and~\ref{thm:mainkappa} 
can be obtained for other natural mathematical objects, such as the non-separable Banach spaces (up to linear isometry); see the forthcoming~\cite{AM} for more on this.

\section{Further extensions of the theory of Borel reducibility} \label{sec:finaldiscussion}

The (classical) theory of Borel reducibility is usually concerned with \emph{analytic} equivalence relations on \emph{standard Borel} spaces. Besides its generalization to the uncountable/non-separable setup 
considered in  
Section~\ref{sec:nonseparable}, there are at least two directions in which the theory could be expanded in order to be able to handle more and more classification problems from other areas of mathematics.

The first extension would be that of considering also analytic equivalence relations with more complicated (i.e.\ non-standard) Borel domains.  To the best of our knowledge very little is known in this direction, but as we have seen in 
 Proposition~\ref{prop:discreteandlocallycompactarecoanalytic}
and Remark~\ref{rmk:nonBorel} this is often a necessary move. A basic observation is that 
when considering the classes of analytic equivalence relations considered in this paper, then
analytic equivalence relations with \emph{standard} Borel domains 
are at least \( \leq_B \)-cofinal among analytic equivalence relations with arbitrary Borel domains.

\begin{proposition} \label{prop:reductiontostandard} 
Let \( E \) be an analytic equivalence relation on a Borel space \( X \), and suppose that  \( E \leq_B H \) for some equivalence relation \( H \) with standard Borel domain.
Then there is a standard Borel space \( Y \supseteq X \) and an analytic equivalence relation \( F \) on \( Y \) such that \( E \) is the restriction of 
\( F \) to \( X \) and \( F \leq_B H \).
\end{proposition}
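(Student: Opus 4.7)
The plan is to construct $Y$ by ``thickening'' the graph of the given Borel reduction. Fix a Borel reduction $f \colon X \to W$ witnessing $E \leq_B H$, with $W$ the standard Borel domain of $H$. By the definition of Borel space, fix a standard Borel space $Y_0 \supseteq X$ inducing the given Borel structure on $X$, and consider the graph map
\[
g \colon X \to Y_0 \times W, \qquad g(x) = (x, f(x)).
\]
Since $f$ is Borel, $g$ is a Borel injection, and its inverse on the image $G_f = g(X)$ is the restriction of the first-coordinate projection, hence Borel. Therefore $g$ is a Borel isomorphism of $X$ onto $G_f$ equipped with the Borel structure inherited from the standard Borel space $Y_0 \times W$.

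To make $X$ literally sit inside a standard Borel space, let $Y$ be any set equipped with a bijection $\iota \colon Y \to Y_0 \times W$ that restricts to $g$ on $X \subseteq Y$ --- concretely, in $Y_0 \times W$ replace each element of $G_f$ by its $g$-preimage in $X$. Transport the standard Borel structure of $Y_0 \times W$ along $\iota$: the first paragraph ensures that the relative Borel structure induced on $X$ by $Y$ agrees with the original one. Then the map
\[
\bar f = \pi_W \circ \iota \colon Y \to W
\]
is Borel and extends $f$, i.e.\ $\bar f \restriction X = f$.

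Set $F = (\bar f \times \bar f)^{-1}(H)$. Being the preimage of an equivalence relation, $F$ is an equivalence relation on $Y$; since $\bar f \times \bar f$ is Borel and $H$ is analytic (as is the case in all applications), $F$ is analytic on the standard Borel space $Y$. By construction $\bar f$ is a Borel reduction of $F$ to $H$, so $F \leq_B H$. Finally, for $x_1,x_2 \in X$,
\[
x_1 \mathrel{F} x_2 \iff \bar f(x_1) \mathrel{H} \bar f(x_2) \iff f(x_1) \mathrel{H} f(x_2) \iff x_1 \mathrel{E} x_2,
\]
so $E = F \cap (X \times X)$, as required. The only subtle point is the coherence of Borel structures, handled in the first paragraph: one cannot hope to extend $f$ directly to $Y_0$ because $X$ need not be Borel (or even analytic) in $Y_0$, and it is precisely the graph trick that forces us to enlarge the ambient space to $Y_0 \times W$ before relabeling.
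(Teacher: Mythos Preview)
Your proof is correct, but the paper takes a shorter route. The paper simply invokes Kuratowski's extension theorem (Kechris, Theorem~12.2): given any standard Borel space $Y \supseteq X$ and any Borel $f \colon X \to Z$ into a standard Borel space $Z$, there is a Borel extension $\hat f \colon Y \to Z$. Then $F$ is defined exactly as you define it, by pulling back $H$ along $\hat f \times \hat f$.

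So your closing remark is the one point worth correcting: it is \emph{not} true that ``one cannot hope to extend $f$ directly to $Y_0$''. Kuratowski's theorem does precisely that, with no hypothesis on the complexity of $X$ inside $Y_0$, and this is exactly what the paper exploits. Your graph trick is a perfectly valid alternative --- it is more elementary in that it avoids citing the extension theorem, and it effectively \emph{reproves} a special case of it by hand --- but it comes at the cost of enlarging the ambient space from $Y_0$ to (a relabeled copy of) $Y_0 \times W$, whereas the paper's argument works for an arbitrary standard Borel $Y \supseteq X$. Both approaches share the implicit assumption that $H$ is analytic in order to conclude that $F$ is analytic; you flag this explicitly, while the paper leaves it tacit.
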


In particular, if \( E \) is a smooth (respectively, essentially countable, classifiable by countable structures, essentially orbit) equivalence relation on a Borel space, then it can be construed as 
the restriction of a smooth (respectively, essentially countable, classifiable by countable structures, essentially orbit) equivalence relation with standard Borel domain.

\begin{proof}
Let \( f \colon X \to Z \) be a Borel reduction of \( E \) to \( H \), where \( Z \) is the standard Borel domain of \( H \). By a theorem of Kuratowski (see~\cite[Theorem 12.2]{Kechris1995}), 
there is a Borel extension \( \hat{f} \colon Y \to Z \) of \( f \), where \( Y  \) is any standard Borel space containing \( X \). Setting for \( x,y \in Y \)
\[ 
x \mathrel{F} y \iff \hat{f}(x) \mathrel{H} \hat{f}(y),
 \] 
we get that \( F \cap (X \times X) = E \) and \( \hat{f} \colon F \leq_B H \), as desired.
\end{proof}

However, considering arbitrary Borel domains instead of just the standard ones strictly enlarges the scope of the theory of Borel reducibility, in the sense that it yields new \( \sim_B \)-equivalence classes. An example 
of this kind is the relation of isomorphism between well-founded trees on \( \Nat \) (or, equivalently, the isometry relation between discrete or locally compact closed subspaces of \( \pre{\Nat}{\Nat} \)).

\begin{proposition} \label{prop:isoWF}
The isomorphism relation on the collection \( \mathrm{WF} \) of well-founded trees on \(\Nat\) is not Borel bi-reducible with any equivalence relation with analytic (hence also standard Borel) domain.
\end{proposition}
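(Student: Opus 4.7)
The plan is to argue by contradiction. Suppose there is an equivalence relation $F$ on a Borel space $Y$ with analytic domain, together with Borel reductions $g \colon {\cong \restriction \mathrm{WF}} \leq_B F$ and $h \colon F \leq_B {\cong \restriction \mathrm{WF}}$. I would realise $Y$ as an analytic subspace of some standard Borel space $Z$ (so that the Borel structure on $Y$ coincides with the one induced by $Z$), and first argue that the image $h(Y) \subseteq \mathrm{WF}$ is an analytic subset of the standard Borel space $\mathrm{Tr}$: the graph $\Gamma_h \subseteq Y \times \mathrm{Tr}$ is Borel, hence analytic in $Z \times \mathrm{Tr}$, and $h(Y)$ is its projection onto $\mathrm{Tr}$.

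The crucial step is then the classical $\Sigma^1_1$-boundedness theorem (see e.g.~\cite[Theorem 35.23]{Kechris1995}): since $T \mapsto \rho_T(\emptyset)$ is a $\Pi^1_1$-rank on the coanalytic set $\mathrm{WF}$, every analytic subset of $\mathrm{WF}$ has ranks bounded below some countable ordinal. Hence there exists $\alpha < \omega_1$ with $h(Y) \subseteq \mathcal{T}_\alpha$, and $h$ viewed as a Borel map $Y \to \mathcal{T}_\alpha$ then witnesses $F \leq_B {\cong_\alpha}$, where $\cong_\alpha$ is the Borel equivalence relation from Fact~\ref{fct:cofinal}.

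Composing with $g$ yields ${\cong \restriction \mathrm{WF}} \leq_B {\cong_\alpha}$. On the other hand, since $\mathcal{T}_{\alpha+1}$ is a Borel subset of $\mathrm{WF}$, the inclusion immediately gives ${\cong_{\alpha+1}} \leq_B {\cong \restriction \mathrm{WF}}$, and therefore ${\cong_{\alpha+1}} \leq_B {\cong_\alpha}$, directly contradicting the strict $\leq_B$-monotonicity of the chain $(\cong_\beta)_{1 \leq \beta < \omega_1}$ recorded in Fact~\ref{fct:cofinal}.

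The heart of the proof is the application of $\Sigma^1_1$-boundedness, which forces any Borel reduction of an equivalence relation with analytic domain into ${\cong \restriction \mathrm{WF}}$ to factor through some $\cong_\alpha$. The analyticity hypothesis on $Y$ is exactly what makes $h(Y)$ analytic in $\mathrm{Tr}$, rather than merely $\mathbf{\Sigma}^1_2$, so that boundedness applies; dropping it would invalidate this step, consistently with the fact that ${\cong \restriction \mathrm{WF}}$ does occupy a genuinely new $\sim_B$-class once one leaves the world of analytic domains.
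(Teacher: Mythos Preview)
Your proof is correct and follows essentially the same approach as the paper's: argue by contradiction, use the $\Sigma^1_1$-boundedness theorem \cite[Theorem 35.23]{Kechris1995} to bound the range of the reduction $h$ inside some $\mathcal{T}_\alpha$, and then derive ${\cong_{\alpha+1}} \leq_B {\cong_\alpha}$ in contradiction with Fact~\ref{fct:cofinal}. The only difference is cosmetic: you spell out in more detail why $h(Y)$ is analytic in $\mathrm{Tr}$ (via the graph argument), whereas the paper simply asserts that the range of a Borel map on an analytic domain is analytic.
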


\begin{proof}
Assume towards a contradiction that \( E \) is an equivalence relation with analytic domain \( X \) which is Borel bi-reducible with isomorphism on \( \mathrm{WF} \), and let
\( f \colon X \to \mathrm{WF} \) and \( g \colon \mathrm{WF} \to X \) be Borel witnesses of this fact. Since the range of \( f \) is an analytic subset of \( \mathrm{WF} \), by~\cite[Theorem 35.23]{Kechris1995} 
it is contained in \( \mathcal{T}_\alpha \) 
for some \( 1 \leq \alpha < \omega_1 \) (recall that \( \mathcal{T}_\alpha \) is the collection of well-founded trees on \( \Nat \) of rank \( < \alpha \), 
see the paragraph before Fact~\ref{fct:cofinal}). Therefore \( f \circ g \) would be a Borel reduction of isomorphism on \( \mathrm{WF} \) to the relation \( \cong_\alpha \) of isomorphism on \( \mathcal{T}_\alpha \), whence its restriction to \( \mathcal{T}_{\alpha+1} \) would witness \( {\cong_{\alpha+1}} \leq_B { \cong_\alpha} \), contradicting Fact~\ref{fct:cofinal}.
\end{proof}

Another direction in which the theory of Borel reducibility could be expanded is that of considering possibly more complicated equivalence relations, like the projective ones. With the remarkable exception of coanalytic equivalence relations~\cite{Silvercoanalytic,Sterncoanalytic,Hjorthcoanalytic,Gaocoanalytic}, 
the possibility of analyzing non-analytic equivalence relations using the 
preorder \( \leq_B \) (or suitable variants of it)  has received little attention in the literature, usually with a focus on the theoretical aspects (e.g.\ number of equivalence classes, see~\cite{Kechrisprojective,Harringtonprojective,Schlichtprojective}) rather than on the determination of the complexity of concrete examples. 
However, the quest for such a generalization is strongly motivated by the fact 
that several classification problems are represented by non-analytic equivalence relations (on suitable spaces of codes), including the following important examples.

\begin{itemizenew}
\item
Clemens~\cite{clemensborel} showed that 
the classification of Borel automorphisms of  \( \pre{\Nat}{2} \) up to conjugacy is a proper \( \mathbf{\Sigma}^1_2 \) equivalence relation, and thus it is certainly neither analytic nor coanalytic.
\item
As observed in~\cite{louros}, the equivalence relation of Borel bi-reducibility \( \sim_B \) between analytic equivalence relations (with standard Borel domains) is \( \mathbf{\Sigma}^1_3 \) in the codes. Adams and Kechris~\cite{adamskechris} indeed showed that the restriction of \( \sim_B \) to countable Borel equivalence relations 
is already a proper \( \mathbf{\Sigma}^1_2 \) equivalence relation.
\item
The homeomorphism problem for Polish metric spaces (that is, the equivalence relation on \( \MS_\Sep \) in which two spaces are equivalent if and only if they are homeomorphic) is \( \mathbf{\Sigma}^1_2 \)  but probably non-analytic, although its restriction to compact spaces is analytic 
(see e.g.\ \cite[Propositions 14.4.2 and 14.4.3]{gaobook}).
\end{itemizenew}

\end{document}